\newtheorem{theorem}{Theorem}
\newtheorem{proposition}[theorem]{Proposition}
\newtheorem{lemma}[theorem]{Lemma}
\newtheorem{definition}[theorem]{Definition}
\newtheorem{corollary}[theorem]{Corollary}
\newtheorem{remark}[theorem]{Remark}
\newtheorem{problem}[theorem]{Problem}
\numberwithin{equation}{section} \numberwithin{theorem}{section}
\newcommand{\mapright}[1]{\smash{\mathop{   \hbox to 0.7cm{\rightarrowfill}}
 \limits^{#1}}}
\newcommand{\Z}{\ensuremath{\mathbb{Z}}}
\newcommand{\Q}{\ensuremath{\mathbb{Q}}}
\newcommand{\R}{\ensuremath{\mathbb{R}}}
\newcommand{\C}{\ensuremath{\mathbb{C}}}
\newcommand{\Vol}{\ensuremath{\mathrm{Vol}}}
\newcommand{\bracket}[1]{\ensuremath{\langle #1 \rangle}}
\DeclareMathOperator{\Hom}{Hom}
\def\C{\mathbb C}
\def\D{\Delta}
\def\p{\partial}
\def\P{\mathbb{P}}
\def\R{{\mathbb R}}
\begin{document}
\title{Relative Algebro-Geometric stabilities of Toric Manifolds}

\author{Naoto Yotsutani}

\address{Graduate School of Mathematics, Nagoya University, Nagoya, 464-8602, Japan}
\email{naoto.yotsutani@gmail.com}

\author{Bin Zhou}
\address{School of Mathematical Sciences, Peking University,
Beijing, 100871, P. R. China; and
Mathematical Sciences Institute, The Australian National University, Canberra, ACT 2601, Australia.}

\email{bzhou@pku.edu.cn}

\thanks {The second author was supported partially by ARC grant DE120101167,  NSFC grants No. 11571018 and 11331001.}

\makeatletter
\@namedef{subjclassname@2020}{%
  \textup{2020} Mathematics Subject Classification}
\makeatother

\subjclass[2020]{Primary: 53C55, Secondary: 14L24, 14M25}
\keywords{Extremal metrics, $K$-stability, Chow stability, toric manifold.} 
\maketitle


\begin{abstract}
In this paper we study the relative Chow and $K$-stability of toric manifolds in the toric sense. 
First, we give a criterion for relative $K$-stability and instability of toric Fano manifolds in the toric sense. 
The reduction of relative Chow stability on toric manifolds will be investigated using the Hibert-Mumford criterion in two ways.
One is to consider the maximal torus action and its weight polytope. 
We obtain a reduction by the strategy of Ono \cite{Ono13}, which fits into the relative GIT stability detected by Sz\'ekelyhidi.
The other way relies on $\C^*$-actions and Chow weights associated to 
toric degenerations following Donaldson and Ross-Thomas \cite{D02, RT07}.  
As applications of our main theorem, we partially determine the relative $K$-stability of  toric Fano threefolds and present 
counter-examples which are relatively $K$-stable in the toric sense but which are asymptotically relatively Chow unstable.
In the end, we explain the erroneous parts of the published version of this article (corresponding to Sections $\ref{sec:Intro}$-$\ref{sec:example}$), which provides some inconclusive results for relative $K$-stability in Table $\ref{list-stability}$.
\end{abstract}

\section{Introduction}\label{sec:Intro}


\vskip 10pt

The well-known Yau-Tian-Donaldson conjecture 
asserts that a compact complex polarized manifold $(X, L)$ admits canonical metrics (K\"ahler-Einstein metrics, constant scalar curvature (cscK) metrics, and extremal metrics, etc)
in $2\pi c_1(L)$ if and only if 
$(X,L)$ is stable in the sense of Geometric Invariant Theory.
Among various notions of stability, 
$K$-stability and Chow stability are the most widely studied. 
Many authors use the term {\it polystability} rather than {\it stability}, 
since the former agrees better with the notions in GIT. Throughout this paper, we use the latter for simplicity.

The conception of $K$-stability was first introduced by Tian  \cite{Ti97} in the study of the existence of 
K\"ahler-Einstein metrics in the first Chern class (if it is positive) on a K\"ahler manifold.  
Later, Donaldson extended it to general polarized varieties \cite{D02} 
and made a conjecture on the relation between $K$-stability and the existence of 
cscK metrics. More generally, for the existence of extremal metrics, 
the definition of $K$-stability was extended by Sz\'ekelyhidi \cite{Sz07} to K\"ahler classes
with 
non-vanishing Futaki invariant and was called relative $K$-stability. 
Meanwhile, the conception of Chow stability is also significant in K\"ahler geometry. Let
${\mathrm{Aut}}(X,L)$ be the automorphism group of $(X,L)$.
In \cite{D01} Donaldson showed that the existence of a cscK metric 
in $2\pi c_1(L)$ implies the asymptotic Chow stability of $(X,L)$ if ${\mathrm{Aut}}(X,L)$ is discrete. 
Donaldson's result was generalized by Mabuchi \cite{Mab05}, with the assumption on ${\mathrm{Aut}}(X,L)$ replaced by  the condition of vanishing higher order Futaki invariants. Very recently, it has been shown that the existence
of extremal metrics implies asymptotically Chow stability relative to a maximal torus \cite{Mab16, S16}.
With these
remarkable progress, the verification of the stabilities is drawing more and more attention. 
In general, this is a complicated problem since one has to study an infinite number of possible degenerations of the manifold. In this paper, we shall discuss the stabilities of toric manifolds.

For toric manifolds, 
a well-understood reduced version of the relative $K$-stability 
on the moment polytope is believed to
be equivalent to the existence of extremal metrics \cite{D02, ZZ08}.  
This conjecture has been confirmed for
toric surfaces \cite{D08, D09, CLS}. 

Let $(X_\D, L_\D)$ be the polarized toric manifold which corresponds to a lattice polytope $\D$
 \begin{equation}
 \langle l_i, x\rangle \leq \lambda_i,  i=1,\ldots,d,
\end{equation}
 satisfying Delzant's condition, where $\lambda_i \in \Z$, $l_i\in \mathbb Z^n$ is primitive.
Let $\theta_\D$ be the   
potential function of the extremal vector field $V$\cite{FM95}, which is affine linear on $\D$, and
 normalized by $\int_\D \theta_\D\,dx=0$ (see Lemma \ref{lemma:extremal}).
In \cite{D02}, Donaldson reduced $K$-stability  to 
the positivity of a linear functional defined on $\D$. 
This functional was generalized for  
relative $K$-stability and is given by \cite{ZZ08}
\begin{equation}\label{linearfunc}
\mathcal L_\D(u)=\int_{\p\Delta}u\,d\sigma-\int_\Delta(\bar S+\theta_\D) u\,dx
\end{equation}
where $\bar S$ is the average of the scalar curvature, and $d\sigma=|l_i|^{-1}d\sigma_0$ on the face in $\{x\in\mathbb R^n: \langle l_i, x\rangle = \lambda_i\}$. 
Here $d\sigma_0$ is the standard Lebesgue measure on $\p\D$.
Note that $ \bar S=\frac{\Vol (\p \D)}{\Vol (\D)}$ and the
functional $\mathcal L_\D$ corresponds to the modified Futaki invariant in \cite{Sz07}.
We mention that the potential function $\theta_\D$ is uniquely determined by the condition that
$\mathcal L_{\D}(u)=0$ for any affine linear function $u$, namely, one can solve the $n+1$-linear system
\[
\mathcal L_{\D}(1)=0, \quad \mathcal L_{\D}(x_i)=0 \qquad \text{for} \qquad i=1, \dots, n,
\]
in order to find $\theta_\D=\sum a_ix_i + c$ with $a_i$ and $c$.
See Section $\ref{sec:computation}$ for more detail.

Recall that a convex function $u$ is {\emph{piecewise linear}} 
if there are affine linear functions $f_1,\dots , f_{\ell}$ such
that $u=\mathrm{max}\set{f_1,\dots ,f_{\ell}}$.
Furthermore $u$ is {\emph{simple piecewise linear}}
if it is of the form $u=\mathrm{max}\set{0,f}$ for a linear function $f$. In view of \cite{D02, Sz07, ZZ08, ZZ08-2},
we have:

\begin{definition}\rm\label{definition:relK}
A toric manifold $(X_\Delta, L_\D)$ is called {\it relatively $K$-semistable  in the toric sense}
if $\mathcal L_\D(u)\geqslant 0$ for all piecewise linear convex functions. 
Furthermore, it is called {\it relatively $K$-stable in the toric sense} 
when $\mathcal L_\D(u)=0$ if and only if $u$ is affine linear.
\end{definition}

When proving the existence of  cscK metrics on toric surfaces 
\cite{D08, D09} Donaldson introduced a stronger notion called {\it strong $K$-stability}. 
Let $\D^*$ be the union of  the interior of $\D$ and the interiors of its co-dimension $1$ faces. 
Denote 
\[
\mathcal C_1=\{u\ |\ \text{$u$ is convex on $\D^*$ and $\int_{\partial \D} u< \infty$} \}.
\] 
The linear functional $\mathcal L_\D$ is well-defined on 
$\mathcal C_1$.

\begin{definition}\rm\label{definition:unif-relK}$(X_\D, L_\D)$ is called
{\it relatively strongly $K$-stable in the toric sense} if $\mathcal L_\D(u)\geqslant 0$ 
for all convex functions
in $\mathcal C_1$ and $\mathcal L_\D(u)=0$ if and only if $u$ is an affine linear function.
\end{definition}

To check the positivity of  $\mathcal L_\D$ in dimension $2$, 
it suffices to consider simple piecewise linear convex functions 
on $\Delta$ \cite{D02, WZ11}. 
In higher dimensions,
a sufficient condition for the relative strong $K$-stability in the toric sense was given by \cite{ZZ08}.
When $X$ is a Fano $n$-fold,  
$L$ is the anti-canonical line bundle and $\bar S=n$ the sufficient
condition is  
\begin{equation}\label{ineq:normal_pot}
\sup_\Delta \theta_\D  \leqslant 1.
\end{equation}
Condition (\ref{ineq:normal_pot}) has been verified for all toric Fano surfaces. In \cite{ZZ08}, it 
was also asked whether it holds 
for higher dimensions or not. Furthermore, 
Mabuchi proposed the following question when considering the existence
of extremal metrics on toric Fano manifolds.
\begin{problem}
Let $(X_\D,L_\D)$ be a polarized toric Fano manifold.
Is $(X_\D,L_\D)$ always relatively $K$-stable or not?
\end{problem}
If the answer were affirmative, one can expect that any toric Fano manifold 
admits an extremal metric like the case of
 K\"ahler-Ricci solitons \cite{WZ04}. However, we 
have found counter-examples (Corollary $\ref{cor:unstableFano3}$).
By a simple observation and together with (\ref{ineq:normal_pot}), we have:

\begin{theorem}\label{prop:unstablecond}
Assume  $(X_\D, L_\D)$ is a toric Fano manifold and $\theta_\D=\sum_{i=1}^na_ix_i+c$, where $a_i, c\in \R$. Let $\Delta^{-}=\set{x\in\Delta: 1-\theta_\D<0 }$. 

\begin{enumerate}
\item If ${\rm Int}(\Delta^{-})=\emptyset$, i.e. $\theta_{\D}\leq 1$ on $\D$, then $(X_\D, L_\D)$  is 
 relatively strongly $K$-stable in the toric sense. Here ${\rm Int}(\Delta^{-})$ is the interior of $\D^-$. 

\item If ${\rm Int}(\Delta^{-})\neq\emptyset$ and satisfies
\begin{equation}\label{unstablecond}
1-c<\frac{\int_{\Delta^{-}} (1-\theta_\D)^2\,dx}{{\mathrm{Vol}}(\Delta^{-})},
\end{equation}
then there exists a simple piecewise linear function such that $\mathcal L_\D(u)<0$.
\end{enumerate}
\end{theorem}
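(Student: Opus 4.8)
The plan is to treat the two parts separately. Part (1) is an immediate consequence of a criterion already recorded in the excerpt: since $\theta_\Delta$ is affine and hence continuous on $\Delta$, the open set $\{\theta_\Delta>1\}$ meets the interior of $\Delta$ precisely when $\sup_\Delta\theta_\Delta>1$, so $\mathrm{Int}(\Delta^{-})=\emptyset$ is equivalent to $\theta_\Delta\le 1$ throughout $\Delta$. This is exactly the sufficient condition \eqref{ineq:normal_pot} for relative strong $K$-stability in the toric sense proved in \cite{ZZ08}, and Part (1) follows by invoking it once this elementary equivalence is noted.

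The substance lies in Part (2), where I must exhibit a single simple piecewise linear convex function $u$ with $\mathcal L_\Delta(u)<0$. The first step is to simplify the functional using the anti-canonical structure. Since $L=-K_X$ the polytope is reflexive, so the defining inequalities may be normalized so that $\lambda_i=1$ on every facet. Applying the divergence theorem to the vector field $u\,x$, and noting that on the $i$-th facet the outward normal is $l_i/|l_i|$ so that $\langle x,\nu\rangle\,d\sigma_0=\lambda_i\,d\sigma=d\sigma$, gives
\begin{equation*}
\int_{\partial\Delta}u\,d\sigma=\int_\Delta\big(n\,u+\langle x,\nabla u\rangle\big)\,dx.
\end{equation*}
Feeding this into \eqref{linearfunc} with $\bar S=n$ cancels the boundary term and leaves
\begin{equation*}
\mathcal L_\Delta(u)=\int_\Delta\langle x,\nabla u\rangle\,dx-\int_\Delta\theta_\Delta\,u\,dx.
\end{equation*}

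I would then take $u=\max\{0,\theta_\Delta-1\}$, which is convex and simple piecewise linear by construction and is supported exactly on $\overline{\Delta^{-}}$, where it equals $\theta_\Delta-1$ with gradient $\nabla\theta_\Delta=(a_1,\dots,a_n)$. Because $\langle x,\nabla\theta_\Delta\rangle=\theta_\Delta-c$, both integrals localize to $\Delta^{-}$ and, completing the square, combine into
\begin{equation*}
\mathcal L_\Delta(u)=\int_{\Delta^{-}}\big[(\theta_\Delta-c)-\theta_\Delta(\theta_\Delta-1)\big]\,dx=(1-c)\,\mathrm{Vol}(\Delta^{-})-\int_{\Delta^{-}}(1-\theta_\Delta)^2\,dx.
\end{equation*}
This is strictly negative exactly when \eqref{unstablecond} holds, which yields the desired destabilizing function.

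The one delicate point I expect is the bookkeeping in the first step: pinning down the reflexive normalization $\lambda_i=1$, combining it correctly with the boundary-measure convention $d\sigma=|l_i|^{-1}\,d\sigma_0$, and verifying that the kink of $u$ along $\{\theta_\Delta=1\}$ contributes nothing (it does not, since $u$ is continuous there and the factor $\theta_\Delta-1$ vanishes on that hyperplane). Once the simplified formula for $\mathcal L_\Delta$ is in hand, the choice $u=\max\{0,\theta_\Delta-1\}$ is forced by the form of \eqref{unstablecond} and the rest is a one-line computation.
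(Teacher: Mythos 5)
Your proof is correct and follows essentially the same route as the paper: part (1) is the criterion \eqref{ineq:normal_pot} from \cite{ZZ08}, and part (2) uses exactly the paper's destabilizing function $u=\max\{0,\theta_\D-1\}$ with the same localization to $\Delta^-$ and the same final identity $\mathcal L_\D(u)=(1-c)\,\mathrm{Vol}(\Delta^-)-\int_{\Delta^-}(1-\theta_\D)^2\,dx$. The only difference is presentational: you re-derive the integration-by-parts formula \eqref{linearfunc2} via the divergence theorem and the reflexive normalization $\lambda_i=1$, whereas the paper simply quotes that identity from \cite{ZZ08}.
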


An application of   
Theorem $\ref{prop:unstablecond}$ is to determine relative $K$-stablities of 
all toric Fano threefolds. 
The condition 
$\theta_\D\equiv 0$ is equivalent to the vanishing of the Futaki invariant  \cite{FM95}.  
Smooth toric Fano threefolds were classified by Batyrev \cite{B81, B98} and K. Watanabe and
M. Watanabe \cite{WW82} independently. In this paper, we use the notation of \cite{B98}.
See Section $\ref{sec:Toric}$ for more detail.
Among all of them, $\C P^3$, $\mathcal{B}_4$, $\mathcal{C}_3$, 
$\mathcal{C}_5$ and $\mathcal{F}_1$ have 
a vanishing Futaki invariant, so
condition  $\eqref{ineq:normal_pot}$ is 
 true. By computation with Theorem \ref{prop:unstablecond}, we have:

\begin{theorem}\label{thm:main}
Let $X$ be a toric Fano threefold. We assume that the Futaki invariant of $X$ does not vanish.
Then $X$ is relatively strongly $K$-stable in the toric sense in the anti-canonical class if and only if $X$ is one of the following:
$\mathcal{B}_2$, $\mathcal{B}_3$, $\mathcal{C}_1$, $\mathcal{C}_4$, $\mathcal{E}_3$,
$\mathcal{E}_4$ and $\mathcal{F}_2$.
\end{theorem}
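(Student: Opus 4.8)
The plan is to run Theorem \ref{prop:unstablecond} as a decision procedure over the smooth toric Fano threefolds with non-vanishing Futaki invariant: part (1) certifies relative strong $K$-stability in the toric sense once we know $\theta_\D\leqslant 1$ on $\D$, while part (2) certifies instability (hence non-stability, since a simple piecewise linear $u=\mathrm{max}\set{0,f}$ lies in $\mathcal C_1$ and $\mathcal L_\D(u)<0$ already breaks semistability) once \eqref{unstablecond} is verified. There are $18$ smooth toric Fano threefolds, and the five with vanishing Futaki invariant ($\CP^3$, $\mathcal B_4$, $\mathcal C_3$, $\mathcal C_5$, $\mathcal F_1$) are excluded by hypothesis, so the statement reduces to the remaining $13$ anti-canonical polytopes. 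The claim is that exactly the seven listed satisfy $\sup_\D\theta_\D\leqslant 1$, and that the other six satisfy \eqref{unstablecond}.

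First I would pin down the extremal potential $\theta_\D=\sum_{i=1}^n a_i x_i+c$. By Lemma \ref{lemma:extremal} it is characterized by $\mathcal L_\D(f)=0$ for every affine linear $f$, together with the normalization $\int_\D\theta_\D\,dx=0$. Testing against $f\equiv 1$ recovers $\bar S=\bigl(\int_{\p\D}d\sigma\bigr)/\bigl(\int_\D dx\bigr)$, and testing against $f=x_j$ gives the symmetric system
\begin{equation*}
\sum_{i=1}^n a_i\left(\int_\D x_i x_j\,dx-\frac{1}{\Vol(\D)}\int_\D x_i\,dx\int_\D x_j\,dx\right)=\int_{\p\D}x_j\,d\sigma-\bar S\int_\D x_j\,dx,
\end{equation*}
whose coefficient matrix is the (positive-definite) covariance matrix of $\D$; this determines the $a_i$ uniquely, after which $c=-\Vol(\D)^{-1}\sum_i a_i\int_\D x_i\,dx$.

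The bulk of the work is then the explicit evaluation, for each of the $13$ reflexive polytopes $\D$ listed in Section \ref{sec:Toric}, of the moments $\Vol(\D)$, $\int_\D x_i\,dx$, $\int_\D x_i x_j\,dx$ and of the facet integrals $\int_{\p\D}d\sigma$, $\int_{\p\D}x_j\,d\sigma$, where a facet in $\{\langle l_i,x\rangle=\lambda_i\}$ carries the weight $|l_i|^{-1}$. I would triangulate each polytope to integrate the polynomials and treat the facets one by one; all quantities are rational. Substituting the resulting $\theta_\D$, I would compute $\sup_\D\theta_\D$ (attained at a vertex, as $\theta_\D$ is affine) to test part (1); and when $\sup_\D\theta_\D>1$ I would identify $\Delta^-=\set{x\in\Delta:1-\theta_\D<0}$, evaluate $\int_{\Delta^-}(1-\theta_\D)^2\,dx$ and $\Vol(\Delta^-)$, and check \eqref{unstablecond}.

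The main obstacle I anticipate is not a single conceptual step but the volume of case-by-case polytope integration, together with the requirement that the dichotomy close cleanly. Theorem \ref{prop:unstablecond} leaves a potential gap whenever $\sup_\D\theta_\D>1$ but \eqref{unstablecond} fails, and in that regime neither alternative applies. Hence for each of the six putatively unstable examples I must confirm that \eqref{unstablecond} genuinely holds—not merely that $\theta_\D$ exceeds $1$ somewhere—and for each of the seven stable examples that the vertex maximum of $\theta_\D$ really does not exceed $1$. Organizing the thirteen computations into one table recording $\theta_\D$, $\sup_\D\theta_\D$, and (where relevant) the two sides of \eqref{unstablecond}, and verifying that the cut is exactly seven versus six, is where the care lies.
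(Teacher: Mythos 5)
Your proposal matches the paper's proof: the paper likewise runs Theorem \ref{prop:unstablecond} as a decision procedure over the thirteen toric Fano threefolds with non-vanishing Futaki invariant, checking $\D^-=\emptyset$ for the seven stable ones and verifying \eqref{unstablecond} for the six unstable ones (carried out explicitly for $\mathcal{B}_2$ and $\mathcal{B}_1$ in Proposition \ref{prop:toricFano}, with the remaining cases done by the same software-assisted computation and tabulated in Table \ref{table:toricFano3}), including the same observation that the non-sharp dichotomy happens to close cleanly for all thirteen cases. The only difference is how the coefficients $a_i$ of $\theta_\D$ are found: you solve the covariance linear system \eqref{eq:lin_fun} directly, whereas the paper takes the $a_i$ from Nakagawa's tables \cite{Naka98} and only computes the constant $c$ via \eqref{eq:const} --- a cosmetic distinction, since the paper itself lists your method as a valid alternative.
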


It is known that all toric Fano surfaces admit extremal metrics in the anti-canonical class \cite{Ca82, CLW08}. The instability tells us that counter-examples appear in dimension 3.

\begin{corollary}\label{cor:unstableFano3}
If $X$ is one of $\mathcal{B}_1$, $\mathcal{C}_2$, $\mathcal{D}_1$, $\mathcal{D}_2$, $\mathcal{E}_1$ and $\mathcal{E}_2$, then $X$ does not admit extremal metrics in its first Chern class.
\end{corollary}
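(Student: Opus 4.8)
The plan is to obtain the corollary by contraposition from the relative $K$-stability framework already assembled above. The key point is that the existence of an extremal metric in $2\pi c_1(-K_X)$ forces relative $K$-semistability in the toric sense, so it suffices to exhibit, for each of the six listed manifolds, a single convex piecewise linear function $u$ on the moment polytope along which $\mathcal{L}_\Delta(u)<0$. Observe that $\mathcal{B}_1,\mathcal{C}_2,\mathcal{D}_1,\mathcal{D}_2,\mathcal{E}_1,\mathcal{E}_2$ are exactly the toric Fano threefolds with non-vanishing Futaki invariant that are \emph{absent} from the stable list of Theorem \ref{thm:main}; thus the destabilizing functions are precisely those furnished by part (2) of Theorem \ref{prop:unstablecond}.

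First I would record the necessity direction. If $X$ carries an extremal K\"ahler metric in $2\pi c_1(-K_X)$, then the modified Futaki invariant is non-negative on every toric degeneration, which in the polytope formulation (recall that $\mathcal{L}_\Delta$ computes this modified invariant, following \cite{Sz07, ZZ08}) says $\mathcal{L}_\Delta(u)\geqslant 0$ for all convex piecewise linear $u$, i.e. relative $K$-semistability in the sense of Definition \ref{definition:relK}. Since a simple piecewise linear function $u=\mathrm{max}\set{0,f}$ is convex and piecewise linear, any $u$ with $\mathcal{L}_\Delta(u)<0$ violates this and hence obstructs existence.

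Second, for each of the six polytopes I would write down the defining data $(l_i,\lambda_i)$ of its reflexive moment polytope $\Delta$ (anti-canonically polarized), solve the linear system of Lemma \ref{lemma:extremal} for the affine-linear extremal potential $\theta_\Delta=\sum_{i=1}^n a_i x_i+c$ normalized by $\int_\Delta \theta_\Delta\,dx=0$, and then identify the region $\Delta^{-}=\set{x\in\Delta: 1-\theta_\Delta<0}$. Because the Futaki invariant does not vanish and these manifolds are not among the stable ones, $\mathrm{Int}(\Delta^{-})\neq\emptyset$, so I land in case (2) of Theorem \ref{prop:unstablecond}; verifying the inequality $1-c<\frac{\int_{\Delta^{-}}(1-\theta_\Delta)^2\,dx}{\mathrm{Vol}(\Delta^{-})}$ by direct integration then produces a simple piecewise linear $u$ with $\mathcal{L}_\Delta(u)<0$, and the non-existence of an extremal metric follows at once from the necessity recorded above.

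The main obstacle is computational rather than conceptual: one must correctly enumerate the vertices of all six reflexive polytopes and carry out the piecewise-polynomial boundary and interior integrals needed to compute $\bar S$, the potential $\theta_\Delta$, $\mathrm{Vol}(\Delta^{-})$, and $\int_{\Delta^{-}}(1-\theta_\Delta)^2\,dx$, finally checking the strict inequality \eqref{unstablecond} in each case. These integrals are routine but error-prone, so I would arrange the bookkeeping so that the computations already performed in establishing the ``only if'' direction of Theorem \ref{thm:main} are reused verbatim here, leaving only the short deduction from instability to non-existence.
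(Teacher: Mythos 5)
Your proposal is correct and follows essentially the same route as the paper: the six listed threefolds are exactly those shown relatively $K$-unstable via the destabilizing simple piecewise linear functions of Theorem \ref{prop:unstablecond}(2) (the computations of $\theta_\Delta$, $\Delta^-$ and condition \eqref{unstablecond} carried out for Theorem \ref{thm:main} and Proposition \ref{prop:toricFano}), and nonexistence then follows by contraposition from the known necessity of relative $K$-semistability in the toric sense for the existence of extremal metrics \cite{ZZ08}. No genuinely different idea is introduced, so nothing further is needed.
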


On the other hand,  the reduction of Chow stability is also an interesting problem.  
A natural idea is to use the 
Hilbert-Mumford criterion.
Ono  \cite{Ono13} studied Chow stability of toric manifolds 
by considering the maximal torus action and its weight polytope.
He obtained a reduction by adapting 
Gelfand-Kapranov-Zelevinsky's theory of Chow polytopes \cite{GKZ94, KSZ92}. 
He also defined a notion of the relative Chow semistability in the toric sense. 
In this paper, we introduce a refinement of this notion so that it fits naturally into 
the relative GIT stability detected by Sz\'ekelyhidi \cite[Chapter $1$]{SzThesis}.

Let $(X_\D, L_\D)$ be a polarized toric manifold and $N=\dim (H^0(X_\D, L_\D))-1$. 
We consider the relative Chow stability of $X_\Delta\subset \C P^N$(see Section \ref{sec:lowest} for definitions).
Now we assume $G=(\C^{*})^{N+1}$ is a subgroup of diagonal matrices in $\mathrm{GL}(N+1, \C)$.
Following \cite{Ono13}, we only consider the specific maximal torus of $\mathrm{SL}(N+1, \C)$
which is also a subtorus of $(\C^{*})^{N+1}$ given by
\[
T_\D^{\C}=\set{(t_1,\dots, t_{N+1})\in (\C^{*})^{N+1} | \prod_{j=1}^{N+1}t_j=1}.
\]
Let $\beta$ be the $\C^*$-action induced by the extremal vector field $V$ as in Section $\ref{redK}$.
\begin{definition}\rm\label{toricrelchow}
$(X_\D, L_\D)$ is {\it relatively Chow semistable(stable, unstable) in the toric sense} if the Chow form is $T_\D^{\C}$-semistable(stable, unstable) relative to $\beta$.
\end{definition}

Finally we consider the asymptotic relative Chow stability.
Denote the Ehrhart polynomial of $\D$ by $E_{\D}(t)$.
For any $i\in \Z_+$, 
we replace $\D$ above by $i\D$, $N+1$ 
by $E_\D(i)$  and $G=(\C^*)^{E_\D(i)}$. Then  
we consider the maximal diagonalized torus 
$T_{i\D}^{\C}:=G\cap {\mathrm{SL}}(E_\D(i),\C)$.
\begin{definition}\rm\label{toricrelchow-asy}
$(X_\D, L_\D)$ is {\it asymptotically relatively Chow semistable (stable, unstable) in the toric sense} if the Chow form is $T_{i\D}^{\C}$-semistable (stable, unstable) relative to $\beta$ for all sufficiently large $i$.
\end{definition}

In this paper, we will describe the asymptotic relative Chow stability in the toric sense in a combinatorial way. The character group $\chi(G)$ of 
$G$ is identified with 
$$\{i\D\cap \Z^n\rightarrow \Z\}\cong \{\D\cap (\Z/i)^n\rightarrow \Z\}\cong \Z^{E_\D(i)}.$$
For future convenience, we denote $\chi(G)\otimes \R$ by
\begin{eqnarray}\label{eq:SetOfCharFun}
W(i\D):=\{i\D\cap \Z^n\rightarrow \R\}\cong \{\D\cap (\Z/i)^n\rightarrow \R\}\cong \R^{E_\D(i)}.
\end{eqnarray}
As in \cite[p.$220$]{GKZ94}, we identify $W(i\D)$ with its dual space by the scalar product
\[
\langle\varphi, \psi\rangle=\displaystyle \sum_{\mathbf{a}\in \D\cap (\Z/i)^n}\varphi(\mathbf{a})\psi(\mathbf{a}).
\]
Let 
$ \bar \theta_{i\D} =\frac{1}{E_\D(i)} \sum_{\mathbf{a}\in \D\cap (\Z/i)^n}\theta_\D(\frac{\mathbf{a}}{i})$.
We define $d_{i\D}, \tilde{\theta}_{i\D}\in W(i\D)$ by 
\[d_{i\D}(\mathbf{a})=1, \ \tilde{\theta}_{i\D}(\mathbf{a})=\frac{\theta_\D(\mathbf{a})-\bar{\theta}_{i\D}}{i},\ \ 
\  \mathbf{a}\in \D\cap (\Z/i)^n.\] 

\begin{theorem}\label{def:ARCSStoric}\rm
$(X_\D, L_\D)$ is asymptotically relatively Chow semistable in the toric sense if there is an $i_0$ such that for each $i\geqslant i_0$,  there exists $s_i$ satisfying
\begin{equation}\label{assy-rel-chow}
\frac{i^n(n+1)!\mathrm{Vol}(\D)}{E_{\D}(i)}\left(d_{i\D}+s_i\tilde{\theta}_{i\D}\right)\in {\mathrm{Ch}}(i\D),
\end{equation}
where $\mathrm{Ch}(i\D)\subset W(i\D)$ is the Chow polytope of $(X_\D, L_{\D}^i)$.
Furthermore, it is asymptotically relatively Chow stable in the toric sense if
\begin{equation}\label{assy-rel-chow-1}
\frac{i^n(n+1)!\mathrm{Vol}(\D)}{E_{\D}(i)}\left(d_{i\D}+s_i\tilde{\theta}_{i\D}\right)\in {\rm Int(\rm{Ch}}(i\D)).
\end{equation}
\end{theorem}

\begin{remark}\rm
As can be shown (Remark \ref{rem:necAsympChow}(1)), $s_i$, if exists, can be explicitly given by
\begin{equation}\label{s-explicit}
s_i=\frac{i\bar\theta_{i\D}E_\D(i)}{\displaystyle\sum_{\mathbf{a}\in \D \cap (\Z / i)^n}  (\theta_\D(\mathbf{a})-\bar\theta_{i\D})^2}.
\end{equation}
\end{remark}

\vskip 10pt

In \cite{RT07}, Ross-Thomas gave a description of Chow stability by using the Hilbert-Mumford criterion 
for the $\C^*$-actions induced by test configurations \cite{D02}. 
Inspired by this idea, we give an alternative
reduction of the relative Chow stability of toric manifolds in Section \ref{redu-2}. 
In order to see its relation to relative $K$-stability,
we define 
\begin{equation}\label{eq:funcQ}
\mathcal Q_\D(i, g)=E_\D(i)\int_\D g\, dx -\mathrm{Vol}(\D) \sum_{{\mathbf{a}}\in \D \cap (\Z / i)^n} \left(1+s_i\tilde\theta_{i\D}(\mathbf{a})\right)g(\mathbf{a})
\end{equation}
for any $g\in PL(\D,i)$. Here  $PL(\D,i)$ is the subset of piecewise linear concave functions (see Section \ref{chow-reduction}) and $s_i$ is given by \eqref{s-explicit}. Then we have

\begin{theorem}\label{rel-pl}
For any $i\in \Z_{+}$, 
$(X_\D, L_\D^i)$ is relatively Chow semistable in  the toric sense if and only if
$\mathcal Q_\D(i,g)\geqslant 0$,
for all $g\in PL(\D, i)$. 
In addition, it is relatively Chow stable in the toric sense if the equality holds
only if $g$ is an affine linear function.
\end{theorem}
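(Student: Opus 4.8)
The plan is to push the Hilbert--Mumford criterion for Sz\'ekelyhidi's relative GIT stability \cite{SzThesis} through the combinatorics of toric degenerations, in the spirit of Ross--Thomas \cite{RT07}. By Definition~\ref{toricrelchow}, relative Chow semistability in the toric sense at level $i$ asserts that the Chow form of $X_\D\subset\mathbb{P}^{E_\D(i)-1}$ is $T_{i\D}^{\C}$-semistable relative to $\beta$; since $T_{i\D}^{\C}$ is a maximal torus of $\mathrm{SL}(E_\D(i),\C)$, the relative Hilbert--Mumford criterion reduces this to nonnegativity of the relative Chow weight on every one-parameter subgroup of $T_{i\D}^{\C}$. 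First I would fix the parametrization: under the self-dual pairing on $W(i\D)$ recorded in the excerpt, a one-parameter subgroup corresponds to an integral vector $\psi\in W(i\D)$ with $\sum_{\mathbf{a}}\psi(\mathbf{a})=0$, and I would invoke the standard fact from GKZ theory \cite{GKZ94} that the Chow weight of $\psi$ depends only on the concave envelope of the lattice points lifted by $\psi$, hence only on the associated piecewise linear concave function $g\in PL(\D,i)$. This replaces the lattice of one-parameter subgroups by the cone $PL(\D,i)$; since $\mathcal Q_\D(i,\cdot)$ in \eqref{eq:funcQ} is continuous and homogeneous and rational $g$ are dense, it suffices to test rational members of $PL(\D,i)$.

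The heart of the argument is the explicit evaluation of the relative Chow weight of the $\C^*$-action attached to $g$. First I would compute the ordinary (non-relative) Chow weight: by the Ross--Thomas description \cite{RT07, D02} of the weight of a toric degeneration, its leading term compares the continuous integral of $g$ over $\D$ with the discrete sum over $\D\cap(\Z/i)^n$, yielding
\[
\frac{i^n(n+1)!\,\mathrm{Vol}(\D)}{E_\D(i)}\Big(E_\D(i)\int_\D g\,dx-\mathrm{Vol}(\D)\sum_{\mathbf{a}\in\D\cap(\Z/i)^n}g(\mathbf{a})\Big),
\]
the positive prefactor being the one that already appears in \eqref{assy-rel-chow}. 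Then I would incorporate the relative correction: Sz\'ekelyhidi's framework measures the weight against a reference shifted along the extremal generator $\beta$, which on the combinatorial side is the direction $\tilde{\theta}_{i\D}$, so the reference density $d_{i\D}$ is replaced by $d_{i\D}+s_i\tilde{\theta}_{i\D}$ with $s_i$ the scalar fixed by \eqref{eq:secpoly2-2}. Adding the term $-\mathrm{Vol}(\D)\,s_i\sum_{\mathbf{a}}\tilde{\theta}_{i\D}(\mathbf{a})g(\mathbf{a})$ converts the non-relative weight into $\mathcal Q_\D(i,g)$, so the relative Chow weight of $g$ coincides, up to the above positive constant, with $\mathcal Q_\D(i,g)$.

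Given the weight identity, the equivalence is formal. Relative Chow semistability at level $i$ holds iff the relative Chow weight is $\geq 0$ on all one-parameter subgroups, iff $\mathcal Q_\D(i,g)\geq 0$ for all rational $g\in PL(\D,i)$, and hence for all $g\in PL(\D,i)$ by continuity and homogeneity; this matches the polytope membership $\tfrac{i^n(n+1)!\mathrm{Vol}(\D)}{E_\D(i)}(d_{i\D}+s_i\tilde{\theta}_{i\D})\in\mathrm{Ch}(i\D)$ of Theorem~\ref{def:ARCSStoric}, since a point lies in the Chow polytope exactly when its pairing with every direction $g$ dominates the support function, and the latter is the discrete-sum term of $\mathcal Q_\D(i,g)$. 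For the stability clause I would identify the equality locus: the affine linear $g$ lie in the kernel of $\mathcal Q_\D(i,\cdot)$ precisely because $\tilde{\theta}_{i\D}$ has mean zero and \eqref{eq:secpoly2-2} matches the discrete and continuous barycenters, and these are the one-parameter subgroups lying in the span of $\beta$ and the trivial (fixed) directions; thus $\mathcal Q_\D(i,\cdot)$ vanishes only on affine functions iff the Chow form is relatively stable.

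The main obstacle is the explicit Chow weight computation feeding this scheme. Two points need care. First, one must show rigorously that the Hilbert--Mumford weight of $\psi$ factors through the regular subdivision, i.e.\ the concave envelope of $\psi$, so that $PL(\D,i)$ is genuinely the correct test family and the prefactor $\tfrac{i^n(n+1)!\mathrm{Vol}(\D)}{E_\D(i)}$ is pinned down; this is exactly where the GKZ presentation of the Chow polytope is needed. Second, \eqref{eq:secpoly2-2} is a vector equation for the single scalar $s_i$, so its solvability is not automatic: it holds precisely because $\theta_\D$ is the potential of the extremal vector field, forcing the discrepancy between the discrete and continuous barycenters of $\D$ to point along $\sum_{\mathbf{a}}\tilde{\theta}_{i\D}(\mathbf{a})\mathbf{a}$. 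I would isolate this compatibility as a separate lemma, since it is what makes the relative projection well-defined and pins the kernel of $\mathcal Q_\D(i,\cdot)$ down to the affine functions.
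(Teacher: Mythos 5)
Your overall strategy is essentially the paper's, in hybrid form: the paper proves Theorem \ref{rel-pl} by combining the weight-polytope characterization of relative semistability (Theorem \ref{th:relative chow polytope} / Theorem \ref{def:ARCSStoric}, obtained from Sz\'ekelyhidi's criterion plus Proposition \ref{prop:GKZ}) with the support-function identity of Lemma \ref{lem:gkz}, while your weight computation via Ross--Thomas normalized weights is the route of Section \ref{redu-2}. Most of your steps would go through, but your closing ``compatibility lemma'' is false, and it is load-bearing. You claim that the solvability of \eqref{eq:secpoly2-2} ``holds precisely because $\theta_\D$ is the potential of the extremal vector field, forcing the discrepancy between the discrete and continuous barycenters of $\D$ to point along $\sum_{\mathbf a}\tilde\theta_{i\D}(\mathbf a)\mathbf a$.'' The paper's own example refutes this: for $\mathcal{E}_4$ (Proposition \ref{prop:relChowunst3}), $\theta_\D$ is the extremal potential and the manifold is even relatively $K$-stable, yet no $s_1$ satisfies \eqref{eq:secpoly2-2}; indeed, the nonexistence of $s_1$ is exactly how the paper proves $\mathcal{E}_4$ is asymptotically relatively Chow unstable (via Corollary \ref{cor:necAsymprelChowss}). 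What extremality actually gives is only the continuous identity $\mathcal L_\D(\pm x_j)=0$, which controls the leading asymptotics $s_i=-\tfrac12+O(i^{-1})$ of Lemma \ref{lem:secpoly} \emph{when $s_i$ exists}; at finite $i$ the discrete barycenter equation is a genuine obstruction and is part of the stability condition itself. Your scheme, which invokes the lemma to make $\mathcal Q_\D(i,\cdot)$ well-defined and to pin down its kernel, would erase this obstruction and ``prove'' too much. The repair is either to define $s_i$ unconditionally by the orthogonal-projection formula \eqref{s-explicit} and observe that nonnegativity of $\mathcal Q_\D(i,\cdot)$ on the affine functions $\pm x_j$ (which lie in $PL(\D,i)$, and on which $\mathcal Q_\D(i,\cdot)$ is linear) then \emph{forces} \eqref{eq:secpoly2-2}, or, as the paper does, to build the existence of $s_i$ into the criterion so that its failure is itself a certificate of instability.

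Two smaller points to fix. First, your sentence that a point lies in the Chow polytope when ``its pairing with every direction $g$ dominates the support function, and the latter is the discrete-sum term'' has the two roles exchanged: membership means the pairing with the candidate point, which \emph{is} the discrete-sum term, is \emph{dominated by} the support function, which by Lemma \ref{lem:gkz} is $i^n(n+1)!\int_\D g_\varphi\,dx$; your two slips cancel, but as written the justification is wrong. Second, replacing the weight of an arbitrary one-parameter subgroup $\psi$ by ``ordinary weight plus a term linear in $\langle\psi,\tilde\theta_{i\D}\rangle$'' needs justification, since the Hilbert--Mumford weight is a support function, not linear in $\psi$: it is legitimate here only because $\beta$ fixes the Chow point, equivalently $\langle\cdot,\tilde\theta_{i\D}\rangle$ is constant on the affine hull of $\mathrm{Ch}(i\D)$ by Proposition \ref{prop:GKZ}; this fact should be stated explicitly in your argument.
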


We would like to point out that Theorems 
\ref{def:ARCSStoric}, \ref{rel-pl} also hold for general polarized toric  varieties.
Concerning on relation between Chow and $K$-stabilities, we have:

\begin{table}\label{list-stability}
\caption{Relative stability in the toric sense of toric Fano threefolds}\label{table:toricFanoRelSta}
\begin{center}
\begin{tabular}{ccc}
\toprule
Notation  & Relative $K$-stability & Asymptotic relative Chow stability\\ 
& & (Definition $\ref{def:ARCSStoric}$) \\ \midrule
$\C P^3$  & \underline{stable}  &  \underline{stable} \\ [3pt]
$\mathcal{B}_1$ & unstable  & unstable \\ [3pt] 
$\mathcal{B}_2$  &   stable &  \\ [3pt]
$\mathcal{B}_3$  &  stable & \\ [3pt]
$\mathcal{B}_4$ & \underline{stable} &  \underline{stable} \\ [3pt]
$\mathcal{C}_1$ &  stable &  \\ [3pt]
$\mathcal{C}_2$ & unstable & unstable \\ [3pt]
$\mathcal{C}_3$ & \underline{stable} &  \underline{stable} \\ [3pt]
$\mathcal{C}_4$  & stable&  \\ [3pt]
$\mathcal{C}_5$ & \underline{stable} &  \underline{stable} \\ [3pt]
$\mathcal{D}_1$ & unstable & unstable \\ [3pt]
$\mathcal{D}_2$ & unstable & unstable \\ [3pt]
$\mathcal{E}_1$ & unstable & unstable \\ [3pt]
$\mathcal{E}_2$ & unstable & unstable \\ [3pt]
$\mathcal{E}_3$ &  stable &  \\ [3pt]
$\mathcal{E}_4$ &  stable & unstable \\ [3pt] 
$\mathcal{F}_1$ & \underline{stable} & \underline{stable} \\ [3pt] 
$\mathcal{F}_2$ &  stable &  \\
\bottomrule
\end{tabular}

\vskip 5pt

\begin{enumerate}
\item[(1)] All \underline{stable} in the table are known before. Others are new in the present paper. All \underline{stable} in relative $K$-stability follows from \cite{ZZ08-2}. All \underline{stable} in asymptotic relative Chow stability (Definition $\ref{def:ARCSStoric}$) follows from \cite{Mab05} and \cite{FOS11}.
\item[(2)] Other relative $K$-stability/instability follows from Thereom $\ref{prop:unstablecond}$.( Proposition $\ref{prop:toricFano}$).
\item[(3)] Asymptotic relative Chow unstability except for $\mathcal{E}_4$ follows from Theorem $\ref{thm:relChowKss}$.
\item[(4)] Asymptotic relative Chow unstability of $\mathcal{E}_4$ follows from Proposition $\ref{prop:relChowunst3}$.
\end{enumerate}
\end{center}
\end{table}

\begin{theorem}\label{thm:relChowKss}
If a polarized toric manifold $(X, L)$ is asymptotically relatively Chow semistable in the toric sense, then it is relatively $K$-semistable in the toric sense. 
\end{theorem}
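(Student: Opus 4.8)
The plan is to convert the discrete relative Chow weight into the continuous functional $\mathcal L_\D$ of \eqref{linearfunc} by an Euler--Maclaurin expansion in the dilation parameter $i$, and then read off the sign. First I would invoke Theorem \ref{rel-pl}: asymptotic relative Chow semistability in the toric sense is equivalent to $\mathcal Q_\D(i,g)\ge 0$ for every $g\in PL(\D,i)$ and every sufficiently large $i$. Hence it suffices, given an arbitrary piecewise linear convex $u$, to exhibit admissible test functions $g=g_i\in PL(\D,i)$ manufactured from $-u$ whose Chow weights recover $\mathcal L_\D(u)$ to leading order. Since both $\mathcal Q_\D(i,\cdot)$ (by the balancing condition \eqref{eq:secpoly2-2} together with the trivial computation $\mathcal Q_\D(i,1)=0$) and $\mathcal L_\D$ (by Lemma \ref{lemma:extremal}) annihilate affine functions, I may replace $u$ by $u-\ell$ for a suitable affine $\ell$ without changing $\mathcal L_\D(u)$; I use this freedom to normalize $g=-u+\ell$ so that it is orthogonal to the $\beta$-direction $\tilde\theta_{i\D}$ in the pairing $\langle\varphi,\psi\rangle=\sum_{\mathbf a}\varphi(\mathbf a)\psi(\mathbf a)$, which is legitimate because $\theta_\D$, hence $\tilde\theta_{i\D}$, is affine and orthogonalizing preserves concavity of $-g$.

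Next I would set up the expansion. For a continuous $f$ on $\D$ one has
\[
\sum_{\mathbf a\in\D\cap(\Z/i)^n} f(\mathbf a)=i^n\int_\D f\,dx+\frac{i^{n-1}}{2}\int_{\p\D}f\,d\sigma+O(i^{n-2}),
\]
with exactly the lattice-normalized measure $d\sigma=|l_i|^{-1}d\sigma_0$ appearing in $\mathcal L_\D$; in particular $E_\D(i)=\mathrm{Vol}(\D)\,i^n+\tfrac12\big(\int_{\p\D}d\sigma\big)i^{n-1}+\cdots$ and $\bar S=\mathrm{Vol}(\D)^{-1}\int_{\p\D}d\sigma$. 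Feeding these into \eqref{eq:funcQ}, the $i^n$ volume terms of $E_\D(i)\int_\D g$ and of $\mathrm{Vol}(\D)\sum_{\mathbf a}g(\mathbf a)$ cancel, while the relative correction $\mathrm{Vol}(\D)\,s_i\sum_{\mathbf a}\tilde\theta_{i\D}(\mathbf a)g(\mathbf a)$ vanishes identically by the orthogonality normalization of $g$ (so the precise growth of the balancing scalar $s_i$ is irrelevant here). What survives at order $i^{n-1}$ is
\[
\mathcal Q_\D(i,g)=\frac{i^{n-1}}{2}\Big(\int_{\p\D}d\sigma\int_\D g\,dx-\mathrm{Vol}(\D)\int_{\p\D}g\,d\sigma\Big)+O(i^{n-2})
=-\frac{\mathrm{Vol}(\D)}{2}\,i^{n-1}\mathcal L_\D(g)+O(i^{n-2}),
\]
where the second equality uses $\bar S=\mathrm{Vol}(\D)^{-1}\int_{\p\D}d\sigma$ and the fact that for the orthogonalized $g$ the term $\int_\D\theta_\D g\,dx$ is of lower order in $i$. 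Since $g=-u+\ell$ gives $\mathcal L_\D(g)=-\mathcal L_\D(u)$, dividing by $i^{n-1}$ and letting $i\to\infty$ in $\mathcal Q_\D(i,g)\ge 0$ forces $\mathcal L_\D(u)\ge 0$.

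Finally I would discharge the admissibility and approximation issues. The consistency of the orthogonalization and the vanishing of the extremal contribution are governed by the defining identity of the extremal field, $\int_{\p\D}x_k\,d\sigma=\bar S\int_\D x_k\,dx+\int_\D\theta_\D x_k\,dx$ (the $f=x_k$ case of Lemma \ref{lemma:extremal}), which is precisely what makes the balancing \eqref{eq:secpoly2-2} solvable for a scalar $s_i$; this is the structural input that lets the whole computation take place relative to $\beta$. For the function classes, a rational piecewise linear convex $u$, after clearing denominators and dilating, is realized by genuine elements $g_i\in PL(\D,i)$ along an arithmetic progression of $i$, and arbitrary piecewise linear convex $u$ then follow from the continuity of $\mathcal L_\D$ under uniform approximation by rational ones on the compact $\D$. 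Since $\mathcal L_\D(u)\ge 0$ for all piecewise linear convex $u$ is exactly relative $K$-semistability in the sense of Definition \ref{definition:relK}, this completes the implication.

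The main obstacle I anticipate lies in the expansion of the middle step, and within it two points. First, the Euler--Maclaurin formula must be justified to order $i^{n-1}$ for the merely piecewise linear, hence non-smooth, function $g$: one decomposes $\D$ into the linearity cells of $g$ and counts lattice points cell by cell, and the delicate bookkeeping is to confirm that the interior creases of $g$ contribute only at order $O(i^{n-2})$ and leave the boundary coefficient $\tfrac12\int_{\p\D}g\,d\sigma$ intact. Second, one must verify carefully that orthogonalizing against $\tilde\theta_{i\D}$ in the discrete pairing forces $\int_\D\theta_\D g\,dx=O(i^{-1})$, so that this relative term is pushed below the critical order rather than contaminating the coefficient of $i^{n-1}$. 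Once these estimates are in place, the remaining sign bookkeeping is routine and the theorem follows.
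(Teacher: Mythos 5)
Your proposal is correct, and it reaches the paper's key asymptotic identity $\mathcal Q_\D(i,\cdot)=-\tfrac{\mathrm{Vol}(\D)}{2}\mathcal L_\D(\cdot)\,i^{n-1}+O(i^{n-2})$ by a genuinely different mechanism. The paper first proves Lemma \ref{lem:secpoly}, pinning down the balancing scalar $s_i=-\tfrac12+O(i^{-1})$ from \eqref{eq:secpoly2-2} (this is where the extremal identity $\mathcal L_\D(x_k)=0$ enters), and then expands $\mathcal Q_\D(i,g)$ for an \emph{arbitrary} fixed $g\in PL(\D,i)$: with that value of $s_i$, the term $-\mathrm{Vol}(\D)\,s_i\sum_{\mathbf a}\tilde\theta_{i\D}(\mathbf a)g(\mathbf a)$ supplies exactly the $\int_\D\theta_\D g\,dx$ piece of $\mathcal L_\D(g)$. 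You never determine $s_i$: you project the test function orthogonally to $\tilde\theta_{i\D}$ in the discrete pairing (adding the affine function $c_i\theta_\D$, which preserves concavity, preserves membership in $PL(\D,i)$ because concave hulls commute with affine shifts, and leaves $\mathcal L_\D$ unchanged), so the $s_i$-term vanishes identically and the $\theta_\D$-piece of $\mathcal L_\D$ is instead recovered from the orthogonality constraint, which forces $\int_\D\theta_\D g_i\,dx=O(i^{-1})$ using only the normalization $\int_\D\theta_\D\,dx=0$. What each approach buys: the paper's route gives the asymptotics of $s_i$ as a byproduct (used again in \eqref{s-explicit} and Proposition \ref{weakchow}) and has simpler error bookkeeping because its test function does not move with $i$; your route needs only the \emph{existence} of some $s_i$ — so Lemma \ref{lem:secpoly} becomes unnecessary — at the price of $i$-dependent test functions, for which you must verify $g_i\in PL(\D,i)$ (your rationality/dilation argument handles this) and uniformity of the $O(i^{n-2})$ errors over the family $g_i$ (fine, since $c_i$ converges and all $g_i$ share the crease structure of $u$). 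Your two flagged obstacles are real but resolve as you expect: splitting $\D$ along the rational linearity cells of $g$, the interior-facet lattice sums at order $i^{n-1}$ cancel exactly against the doubly counted half-boundary terms, and the orthogonality estimate is the computation sketched above. One small inaccuracy, not load-bearing: the extremal identity is not "what makes \eqref{eq:secpoly2-2} solvable" — solvability of that over-determined system is part of the semistability hypothesis and can genuinely fail, as Proposition \ref{prop:relChowunst3} ($\mathcal E_4$) shows.
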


In view of Theorem \ref{thm:main}, we also have the following.
\begin{corollary}
If $X$ is one of $\mathcal{B}_1$, $\mathcal{C}_2$, $\mathcal{D}_1$, $\mathcal{D}_2$, $\mathcal{E}_1$ and $\mathcal{E}_2$, then $X$ 
is asymptotically relatively Chow unstable.
\end{corollary}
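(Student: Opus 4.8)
The plan is to derive this corollary by contraposition from Theorem~\ref{thm:relChowKss}, feeding in the relative $K$-instability of these six threefolds that is established just above. No new computation is needed: the entire content is a chaining of implications, so I would simply assemble them in the correct logical direction and check the one subtle point, namely that the destabilizing object already obstructs $K$-\emph{semi}stability and not merely strong $K$-stability.

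First I would record that each of $\mathcal{B}_1$, $\mathcal{C}_2$, $\mathcal{D}_1$, $\mathcal{D}_2$, $\mathcal{E}_1$ and $\mathcal{E}_2$ fails relative $K$-semistability in the toric sense. Indeed, these are precisely the non-strongly-$K$-stable cases isolated by Theorem~\ref{thm:main} (equivalently Corollary~\ref{cor:unstableFano3}), and for each of them the potential $\theta_\D$ of the extremal field satisfies the instability inequality~\eqref{unstablecond} of Theorem~\ref{prop:unstablecond}(2). Part (2) of that theorem then produces a simple piecewise linear function $u=\mathrm{max}\{0,f\}$ with $\mathcal L_\D(u)<0$. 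Since such a $u$ is in particular a piecewise linear convex function, it directly contradicts the defining inequality $\mathcal L_\D(u)\geqslant 0$ of Definition~\ref{definition:relK}; hence none of the six is relatively $K$-semistable in the toric sense. Next I would apply Theorem~\ref{thm:relChowKss}, which states that asymptotic relative Chow semistability in the toric sense implies relative $K$-semistability in the toric sense. Its contrapositive says that relative $K$-instability in the toric sense forces asymptotic relative Chow instability in the toric sense, so each of the six threefolds is asymptotically relatively Chow unstable in the toric sense. Finally, instability in the toric sense is witnessed by a one-parameter subgroup contained in the distinguished torus $T_{i\D}^{\C}\subset \mathrm{SL}(E_\D(i),\C)$ for all large $i$; such a subgroup is a fortiori a destabilizing one-parameter subgroup for the unrestricted relative Chow problem, whence $X$ is asymptotically relatively Chow unstable.

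There is no substantive obstacle internal to this corollary, since all of the analytic and combinatorial effort lives in Theorems~\ref{prop:unstablecond}, \ref{thm:main} and~\ref{thm:relChowKss}. The only points demanding care are logical ones: I must confirm that the destabilizing function coming out of Theorem~\ref{prop:unstablecond}(2) is genuinely piecewise linear convex, so that it obstructs $K$-semistability rather than only strong $K$-stability, and I must emphasize that the excluded case $\mathcal{E}_4$ — which is relatively $K$-stable yet asymptotically relatively Chow unstable — is deliberately \emph{not} reachable by this argument and has to be handled separately (cf.\ Proposition~\ref{prop:relChowunst3}).
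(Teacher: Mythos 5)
Your proposal is correct and takes essentially the same route as the paper: the corollary is exactly the contrapositive of Theorem~\ref{thm:relChowKss} applied to the six threefolds, whose failure of relative $K$-\emph{semi}stability (not merely strong $K$-stability) is witnessed by the simple piecewise linear convex destabilizer produced by Theorem~\ref{prop:unstablecond}(2), as you correctly emphasize. Your closing remark upgrading toric-sense instability to the unrestricted relative Chow problem is a harmless addition; the paper intends the statement in the toric sense (as noted in its discussion of Table~\ref{table:toricFanoRelSta}), so the argument already closes there.
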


In general, asymptotic Chow semistability is much stronger than $K$-semistability.
In order to see the direct evidence of the difference between Chow stability and $K$-stability 
consider the first counter-example that was discovered in \cite{OSY12}.
They used the non-symmetric K\"ahler-Einstein toric Fano
$7$-fold of
\cite{NP11}. In the case where $X$ is non-toric,
lower dimensional counter-examples were discovered by 
Odaka \cite{Oda12} and Vedova and Zuddas \cite{VZ12}.
In \cite{NP11}, it was also proved that all toric Fano manifolds with the vanishing Futaki invariant 
are symmetric if $\dim X \leqslant 6$.
Note that if $X$ is a symmetric toric Fano manifold, 
then $(X,-K_X)$ is asymptotically Chow stable \cite{Mab05, Fut04}.
Hence the lowest dimension for an anti-canonically polarized K\"ahler-Einstein 
toric Fano manifold $(X,-K_X)$ to be asymptotically Chow unstable is $7$. 
One aim of this paper is to provide such an example in a lower dimensional toric case. 
We have found a 3-dimensional toric orbifold example admitting the K\"ahler-Einstein metric
but which is asymptotically Chow unstable in the case where $X$ is $\Q$-Fano (Proposition $\ref{thm:Chowunst}$).
When we consider the relative stabilities, we find the smooth example $\mathcal E_4$ which is relatively $K$-stable but not asymptotically relatively Chow semistable (Proposition $\ref{prop:relChowunst3}$).
The asymptotic Chow stability of $\C P^3$, $\mathcal{B}_4$, $\mathcal{C}_3$, $\mathcal{C}_5$, $\mathcal{F}_1$ follows from \cite{FOS11, Mab05}. Hence, we list all the determined stability of
toric Fano threefolds in this paper in Table \ref{list-stability}. Note that the stabilities are all in the toric sense.
It is an interesting question to complete the table, i.e. to determine the  
remaining stabilities.

\vskip 8pt

This paper is organized as follows. Section $\ref{sec:prelim}$ 
is a brief review of toric varieties and the reduction of 
relative $K$-stability on toric manifolds which will be used at later stages in the paper.
We also prove Theorem $\ref{prop:unstablecond}$.
In Sections $\ref{sec:Chowsta}$ and  $\ref{redu-2}$ we shall discuss the two ways of reduction of 
the relative Chow stability on toric manifolds.
In Section \ref{sec:example}, we present various examples for the stabilities considered in the paper.
We compute normalized potentials on toric Fano threefolds  and verify
the relative $K$-stability or instablity in Section $\ref{sec:computation}$. 
We also provide an example of $K$-stable toric Fano orbifold $X$ 
which is asymptotically Chow unstable in $\dim X=3$. 
Finally, we discuss the asymptotic relative Chow stability of toric Fano threefolds. 
The computational results for $\theta_\D$ and $\D^-$ are listed in Table $\ref{table:toricFano3}$.

\vskip 5pt

\noindent {\bfseries Acknowledgements.} 
The first author would like to thank Professors Y. Nakagawa, Y. Sano and A. Higashitani 
for their valuable comments and discussions. 
In particular, Higashitani suggested to us to use the toric package \cite{normaliz} for our computations. 
Both authors thank the referee for valuable suggestions to improve this article.


\section{Preliminaries}\label{sec:prelim}


\subsection{Toric varieties}\label{sec:Toric}

We review some of notations of toric varieties.
Detailed discussion on the general theory can be found in \cite{CLS11}.
Let $\mathfrak M$ be a lattice of rank $n$, where $\mathfrak N=\Hom(\mathfrak M,\mathbb{Z})$ is
the $\mathbb{Z}$-dual of $\mathfrak M$. 
We define $\mathfrak M_\R:=\mathfrak M\otimes_\Z \R \cong \R^n$ 
(resp. $\mathfrak N_\R:=\mathfrak N\otimes_\Z \R$).
Let $\Sigma$ denote a complete fan in $\mathfrak N_{\R}$, i.e. $ \cup_{\sigma \in \Sigma}\sigma =\mathfrak {N}_{\R}$. 
The $k$-dimensional cones of $\Sigma$ form a set $\Sigma(k)$.
Let $\sigma$ be a cone in $\Sigma$.
The associated affine scheme $U_{\sigma}:={\mathrm{Spec}}\: \C[\mathfrak M\cap \sigma^{\vee}]$ 
is called an affine toric variety.
Then $\Sigma$ defines a toric variety $X:=X(\mathfrak N,\Sigma)$ by constructing 
the disjoint union of the affine toric varieties
$U_{\sigma}$, where one glues $U_{{\sigma}_1}$ and $U_{{\sigma}_2}$ along the open subvariety 
$U_{{\sigma}_1\cap {\sigma}_2}$, for $\sigma_1, \sigma_2 \in \Sigma$.
We generally  define a {\emph{toric variety}} $X$ as a 
complex algebraic normal variety containing a torus $T_{\C}^n=\mathrm{Spec}\: \C[\mathfrak M]$
as a Zariski open subset, such that, the action of $T_{\C}^n$ 
on itself extends to an algebraic action of $T_{\C}^n$ on $X$.

A polytope $\D\subseteq \mathfrak M_{\R}$ is called a lattice (resp. rational) polytope 
if all its vertices are in $\mathfrak M$ (resp. $\mathfrak M_{\Q}=\mathfrak M\otimes_\Z \Q$).
Let $\Delta \subseteq \mathfrak M_{\R}$ be a rational $n$-dimensional polytope with $0\in {\mathrm{Int}} \Delta$.
We define the dual polytope $\Delta^{\circ} \subseteq \mathfrak N_{\R}$ by
\[
\Delta^{\circ}:=\set{ a\in \mathfrak N_{\R}| \braket{a,b}\geqslant -1 \; \text{for all } b\in \Delta },
\]
which is also a rational $n$-dimensional polytope with $0\in {\mathrm{Int}} \Delta^{\circ}$.
We denote a face $F$ of $\Delta^{\circ}$ by $F\prec \Delta^{\circ}$.
The fan $\mathcal{N}_{\Delta}:=\set{{\mathrm{pos}}(F)| F \prec \Delta^{\circ}}$ 
is called the {\emph{normal fan}} of $\Delta$,
where ${\mathrm{pos}}(F)$ is the linear positive hull of $F$. 
For a rational polytope $\Delta \subseteq \mathfrak M_{\R}$, we define
the associated toric variety by
$X_{\D}:=X(\mathfrak N,\mathcal{N}_{\D})$. In particular, when $\D$ satisfies 
Delzant's condition, it corresponds to a smooth compact toric manifold.
It is well-known that there is the bijective correspondence between $n$-dimensional
lattice polytopes and compact toric varieties with  
$(\C^{*})^n$-equivariant very ample line bundles. 

\vskip 8pt 
The discussion on examples (Section $\ref{sec:example}$) will focus on toric Fano threefolds.  We recall the related notations here.  See \cite{Deb03} and \cite{NiThesis} for more details.

Let $X$ be a complex projective normal variety. We call $X$ a {\emph{$\Q$-Fano variety}} if the anti-canonical divisor $-K_X$
is an ample $\Q$-Cartier divisor. Furthermore, $X$ is {\emph{Fano variety}} if $-K_X$ is an ample
Cartier divisor.
Let $P\subseteq \mathfrak N_{\R}$ be an $n$-dimensional lattice polytope which contains the origin in its interior.
Then $P$ is called a {\emph{canonical Fano polytope}} if $\mathrm{Int} P\cap \mathfrak N=\set{0}$.
Furthermore, $P$ is called a {\emph{Fano polytope}} if the vertices of any facet of $P$ form a 
$\Z$-basis of the lattice $\mathfrak N$.
By \cite[Proposition $2.3.7$]{NiThesis}, there is a bijective correspondence between isomorphism classes of Fano polytopes (resp. canonical Fano polytopes)
and smooth toric Fano varieties (resp. toric Fano varieties with canonical singularities).
Here and hereafter we assume that all singularities are at worst orbifold singularities.
 Hence they are associated to complete simplicial fans \cite[Theorem $3.1.19$]{CLS11}.

Let $X$ be a complex normal variety. Recall that $X$ is called 
{\emph{$\Q$-factorial}} if any Weil divisor is $\Q$-Cartier.
For the toric case, we have a well-known description in terms of a Fano polytope. A polytope is called {\emph{simplicial}}
if any facet is a simplex.
It was shown that simplicial Fano polytopes correspond uniquely up to 
isomorphism to $\Q$-factorial toric Fano varieties (see Proposition $2.3.12$ in \cite{NiThesis}). 
Since a toric variety $X$ has only finite quotient singularities (that is, $X$ is an orbifold) if and only if
 the associated fan $\Sigma$
is simplicial, $\Q$-factorial toric Fano varieties are toric Fano orbifolds.

In \cite{Kasp10}, Kasprzyk found that there are $12,190$ $\Q$-factorial toric Fano varieties up to isomorphism.
In the case when $X$ is smooth, toric Fano threefolds are classified in \cite{B81, WW82}. 
There are $18$ toric Fano threefolds, 
that is, 
$\C P^3$, $\mathcal{B}_1$, $\mathcal{B}_2$, $\mathcal{B}_3$, $\mathcal{B}_4=\C P^2\times \C P^1$,
$\mathcal{C}_1$, $\mathcal{C}_2$, $\mathcal{C}_3=\C P^1 \times \C P^1 \times \C P^1$,
$\mathcal{C}_4=\C P^1\times (\C P^2\; \# \;  \overline{\C P^2})$, 
$\mathcal{C}_5=\C P_{\C P^1\times \C P^1}(\mathcal{O} \oplus \mathcal{O} (1,-1))$, 
 $\mathcal{D}_1$, $\mathcal{D}_2$,
$\mathcal{E}_1$, $\mathcal{E}_2$, $\mathcal{E}_3=\C P^1\times (\C P^2\; \# \;  2\overline{\C P^2})$, $\mathcal{E}_4$,  $\mathcal{F}_1=\C P^1\times (\C P^2\; \# \; 3 \overline{\C P^2})$ and $\mathcal{F}_2$.
Note that we use the same notation as in \cite{B98}. 
These classification results are available online at \cite{graded_ring, Ob07}.


\subsection{Reduction of relative $K$-stability}\label{redK}

In this subsection, we recall the reduction of the Futaki invariant on toric manifolds.
We also present the formulae to determine  the normalized potential $\theta_\D$ of the extremal vector field in symplectic coordinate and criterions for relative $K$-stability and instablity.

First we recall the Futaki invariant and the extremal vector field.  
Let $(X, \omega_g)$ be a compact K\"ahler manifold.
Let $\text{Aut}^0(X)$ be the identity component of the biholomorphisms
group of $X$. Then  $\text{Aut}^0(X)$ has a semidirect decomposition 
$$\text{Aut}^0(X)=\text{Aut}_r(X)\ltimes R_u, $$ 
where $\text{Aut}_r(X)$ is a reductive algebraic subgroup of
$\text{Aut}^0(X)$, which is the complexification
of a maximal compact subgroup $K$, and $R_u$ is the nilpotent radical of $\text{Aut}^0(X)$.
We denote
the Lie algebra of $\text{Aut}_r(X)$ by $\eta_r(X)$. Let $v\in\eta_r(X)$ so
that its imaginary part generates a one-parameter compact subgroup
of $K$. Then if the  K\"ahler form $\omega_g$ is $K$-invariant,
there exists a unique real-valued function $\theta_v(\omega_g)$ (called 
normalized potential of $v$) such that
\begin{equation}\label{norm1}
 i_v\omega_g =\sqrt{-1}\bar\partial \theta_v(\omega_g) \ \ \text{and}\ 
\int_X\theta_v(\omega_g)\frac{\omega_g^n}{n!}=0.
\end{equation}
For simplicity, we denote the set of such potentials $\theta_v$  by $ \Xi_{\omega_g}$.
Then the Futaki invariant on $\eta_r$ can be written as
\begin{equation}
F(v)=-\int_X \theta_v(\omega_g) (S(\omega_g)-\bar S)\frac{\omega_g^n}{n!},
\end{equation}
where $S(\omega_g)$ is the scalar curvature of $\omega_g$.
In \cite{FM95},  Futaki and Mabuchi defined the {\it extremal vector field}, $V=g^{i\bar j}(\text{proj} (S(\omega_g)))_{\bar j}\frac{\partial}{\partial z_i}$ in $\eta_r(X)$ for the K\"ahler
class $[\omega_g]$, where $\text{proj}(S(\omega_g))$ is the $L^2$-inner projection of 
the scalar curvature of $\omega_g$ to $ \Xi_{\omega_g}$.   
They showed that $V$ is
independent of the choice of $K$-invariant metrics in $[\omega_g]$,
and its potential is uniquely determined 
as the dual of the Futaki invariant with respect to the $L^2$ bilinear form
\begin{equation}
F(v)=
-\int_X \theta_v(\omega_g)\theta_V(\omega_g)\frac{\omega_g^n}{n!}, 
\ \ \forall\ v\in\eta_r(X).
\end{equation} 

Now we consider the reduction on a polarized toric manifolds $(X, L)$.
Choose an $(S^1)^n$-invariant K\"ahler metric $g$ with $\omega_g\in 2\pi c_1(L)$.
By choosing a base point, we identify
the open dense orbit of the complex torus action on $X$ with $(\mathbb C^{*})^n$
and use the coordinates $(z_1,\ldots,z_n)$ induced from $(\mathbb C^{*})^n$.
Denote the affine logarithmic coordinates
$w_i=\log z_i=y_i+\sqrt{-1}\eta_i$.
Then $\omega_g$ is determined by a smooth convex function $\varphi$
which depends only on $y_1,\ldots,y_n\in{\mathbb R^n}$
in the coordinates $(w_1,\ldots, w_n)$, namely
\begin{equation}\label{metric}
\omega_g =2\sqrt{-1}\partial\bar{\partial}\varphi
\end{equation} 
on $(\mathbb C^{*})^n$. As is well-known, the moment map can be given by $D \varphi$
and the image 
$\Delta =D\varphi(\mathbb R^n)$ is a polytope.
We denote by $x_i=\frac{\partial\varphi}{\partial y_i}$, $i=1,\ldots, n$, the symplectic coordinates.
Note that in the affine logarithm coordinates $(w_1,\ldots,w_n)$, 
$\{\frac{\partial}{\partial w_i},\ i=1,\ldots,n\}$ is a basis 
of the Lie algebra of $T_{\mathbb C}^n$ as a complex subalgebra of the 
Lie algebra of holomorphic vector fields on $X$. 
In particular, the Futaki invariant is given by the formula
\[
F(\frac{\p}{\p w_i})=\int_{\p \D} x_i\, d\sigma - \bar S \int_\D x_i \,dx
\]
and this leads the fact that $\mathcal L_\D (u)=0$ for any affine linear function $u$ as mentioned in
$\eqref{linearfunc}$.
The following lemma was given in \cite{ZZ08} on how to determine 
$\theta_V(\omega_g)$ in symplectic coordinates through the Futaki invariant.

\begin{lemma} \label{lemma:extremal}
Let $g$ be an $(S^1)^n$-invariant metric on $X$. 
Assume $V$ is the extremal vector field and
$\theta_V(\omega_g)$ is  
the normalized potential 
associated to $\omega_g$ by (\ref{norm1}). 
Then there are $2n$-numbers $a_i$ and $c_i$ such that
 $$\theta_V(\omega_g)=\sum_{i=1}^n a_i(x_i + c_i)=:\theta_\D,$$
  where $x=(x_i)=D\varphi\in \Delta$. Moreover $a_i$ and $c_i$ are determined uniquely by $2n$-equations,
\begin{eqnarray}
\frac{1}{(2\pi)^n}F(\frac{\partial}{\partial
w_i}) & =&- \int_\Delta \left(\sum_{j=1}^n a_j(x_j + c_j)\right)(x_i+c_i)
dx,\ i=1,\ldots,n,\\  
 \int_\Delta (x_i+c_i) dx &=& 0,\ i=1,\ldots,n.
\end{eqnarray}
\end{lemma}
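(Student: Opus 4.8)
The plan is to determine the normalized potential $\theta_\D$ by exploiting the fact that $\theta_V(\omega_g)$ lies in the space $\Xi_{\omega_g}$ of normalized Killing potentials, which on a toric manifold is spanned by the affine coordinate functions $x_i+c_i$. First I would observe that the affine logarithmic vector fields $\frac{\partial}{\partial w_i}$ generate the torus action, and their Hamiltonian potentials (in symplectic coordinates) are precisely the coordinate functions $x_i$ up to additive constants. Hence any element of $\eta_r(X)$ coming from the torus — in particular the extremal vector field $V$, which by \cite{FM95} lies in the center of $\eta_r$ and is $T_\C^n$-invariant — has a potential that is an affine-linear function of $x=(x_1,\dots,x_n)$. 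This justifies writing $\theta_V(\omega_g)=\sum_{i=1}^n a_i(x_i+c_i)$ for some real constants $a_i,c_i$; the task is then to pin down these $2n$ constants.

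The normalization $\int_X \theta_V(\omega_g)\,\frac{\omega_g^n}{n!}=0$ translates, under the moment map $D\varphi$ identifying $X$ (up to measure zero) with $\Delta$ equipped with Lebesgue measure, into an integral over $\Delta$. The cleanest way to absorb this is to impose the $n$ conditions $\int_\Delta (x_i+c_i)\,dx=0$ for each $i$, which fix the shifts $c_i$ uniquely as $c_i=-\frac{1}{\mathrm{Vol}(\Delta)}\int_\Delta x_i\,dx$; summing against the $a_i$ then automatically yields $\int_\Delta \theta_\D\,dx=0$, matching the normalization in \eqref{norm1}. This accounts for the last $n$ equations and is essentially bookkeeping once the identification of integrals is set up.

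For the remaining $n$ equations, I would use the defining property of the extremal vector field as the $L^2$-dual of the Futaki invariant: $F(v)=-\int_X \theta_v(\omega_g)\theta_V(\omega_g)\,\frac{\omega_g^n}{n!}$ for all $v\in\eta_r(X)$. Specializing $v=\frac{\partial}{\partial w_i}$, whose normalized potential is $x_i+c_i$, and again pushing the integral forward to $\Delta$, the right-hand side becomes $-\int_\Delta \big(\sum_j a_j(x_j+c_j)\big)(x_i+c_i)\,dx$. The normalization factor $(2\pi)^{-n}$ on the left is exactly the Jacobian constant relating $\frac{\omega_g^n}{n!}$ on $X$ to $dx$ on $\Delta$ (since $\omega_g\in 2\pi c_1(L)$), which I would verify by the standard computation that $D\varphi$ pushes $\frac{\omega_g^n}{n!}$ forward to $(2\pi)^n\,dx$. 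This gives the first $n$ equations.

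Finally, to claim that $a_i$ and $c_i$ are determined \emph{uniquely}, I would check that the linear system is nondegenerate. The $c_i$ are already uniquely fixed by the second set of equations; substituting into the first set, the map $(a_j)\mapsto \big(\int_\Delta \sum_j a_j(x_j+c_j)(x_i+c_i)\,dx\big)_i$ has coefficient matrix $M_{ij}=\int_\Delta (x_i+c_i)(x_j+c_j)\,dx$, which is a Gram matrix of the functions $\{x_i+c_i\}$ in $L^2(\Delta)$. Since $\Delta$ is $n$-dimensional these functions are linearly independent, so $M$ is positive definite and hence invertible, giving a unique $(a_i)$. The main obstacle is the careful verification of the pushforward constant $(2\pi)^n$ and the matching of the two normalizations — the geometric content is standard, but getting the factors exactly right (so that the stated equations hold verbatim) requires attention; everything else is linear algebra and an application of the already-cited results of Futaki-Mabuchi.
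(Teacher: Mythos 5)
Your proposal is correct, and it is essentially the standard argument for this lemma, which the paper itself does not prove but defers to Zhou--Zhu \cite{ZZ08}: the extremal field lies in the center of $\eta_r(X)$ and hence in the torus algebra, so $\theta_V$ is affine in the symplectic coordinates; the pushforward factor $(2\pi)^n$ together with the $L^2$-duality defining $V$ yields the first $n$ equations, the normalization fixes the $c_i$, and the positive-definite Gram matrix of $\{x_i+c_i\}$ on the $n$-dimensional polytope gives uniqueness of the $a_i$. I see no gaps.
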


As mentioned in the introduction, the relative $K$-stability in the toric sense
refers to the positivity of the linear functional (\ref{linearfunc})
for convex functions. 
Note that $\mathcal L_\D(u)$ is invariant when adding an affine linear function to $u$.
Without loss of generality, we assume $\mathbf 0$ lies in the interior of $\D$.
Hence, it suffices to consider convex functions  normalized at ${\mathbf 0}$ in the sense that 
$\inf_{x\in\Delta} u(x)=u({\mathbf 0})=0$. 
When $X$ is a toric Fano manifold, it is observed in \cite{ZZ08} that
 \begin{equation}\label{linearfunc2}
\mathcal L_\D(u)=\int_{\Delta}\left(\sum_{i=1}^n x_i u_i-u\right)+(1-\theta_{\D})u\,dx
\end{equation}
for $C^1$ functions by an integration by parts from $\eqref{linearfunc}$. 
Here $u_i=\frac{\partial u}{\partial x_i}$. By approximation,
it is easy to see that  $\eqref{linearfunc2}$ can also be used for the computation of $\mathcal L_\D(u)$ 
for piecewise $C^1$ functions.
As can be seen from $\eqref{linearfunc2}$, 
the positivity of $\mathcal L_\D$ relies heavily on the positivity of $1-\theta_\D$. 
Assume $\theta_\D=\sum_{i=1}^na_ix_i+c$.
Then we shall prove Theorem $\ref{prop:unstablecond}$.

\begin{proof}[Proof of Theorem $\ref{prop:unstablecond}$]
(1) is obvious \cite{ZZ08}. We only need to consider (2).
If $1-\theta_\D<0$, i.e. $\Delta^-=\Delta$, it is obvious that 
all simple piecewise linear convex functions of the form
$\max\{\sum_{i=1}^n b_ix_i, 0\}$
will destabilize $\Delta$. So we assume $1-\theta_\D=0$ intersects the interior of $\Delta$.
Let 
$$u=\max\{-(1-\theta_{\D}), 0\}.$$
Then $$\sum_{i=1}^n x_iu_i-u=\begin{cases}
1-c,  & x\in \Delta^-;\\
0 ,&  x \in \Delta\setminus \Delta^-.
\end{cases}$$
Hence, 
$$\mathcal L_\D(u)=(1-c){\mathrm{Vol}}(\Delta^{-})-\int_{\Delta^{-}} (1-\theta_\D)^2\,dx.$$
The theorem follows.
\end{proof}
The condition in this theorem is not sharp but we will see in
Section \ref{sec:example} that this criterion can determine all the stable toric Fano threefolds.


\section{Relative Chow stability of toric manifolds}\label{sec:Chowsta}

In this section, we consider relative Chow stability of polarized toric manifolds.


\subsection{Notions of Chow stabilities}\label{sec:lowest}
We first recall various notions of Chow stabilities.
We refer to the monograph \cite{Fut12} by Futaki for a more general concept of 
Chow stability in K\"ahler geometry. 
In \cite{Mab04, Mab14}, Mabuchi defined the notion of relative Chow stability in order to 
consider the existence problem of extremal K\"ahler metrics.
A historical background of relative GIT stability 
is given by Sz\'ekelyhidi \cite[Chapter $1$]{SzThesis}.

Let $G$ be a connected reductive complex algebraic group with Lie algebra $\mathfrak{g}$.
Let $\mathbf V$ be a finite dimensional complex vector space.
Suppose that $G$ acts linearly on ${\mathbf{V}}$.
Assume $v^{\ast}$ is  a nonzero vector in ${\mathbf{V}}$ which is a representative of 
$v=[v^{\ast}]\in \P({\mathbf{V}})$.
According to GIT,
$v^{\ast}$ is  {\it $G$-semistable} 
if the closure of the $G$-orbit $\mathcal{O}_G(v^{\ast})$ does not contain the origin. Furthermore,
$v^{\ast}$ is {\it $G$-stable} if $\mathcal{O}_G(v^{\ast})$ is closed.
We call $v^{\ast}$ {\it $G$-unstable} if $v^{\ast}$ is not $G$-semistable. Analogously, $v\in \P({\mathbf{V}})$ 
is said to be  $G$-semistable (resp. stable, unstable) 
if any representative of $v$ is $G$-semistable (resp. stable, unstable). 

To feature relative stability,  following \cite[Chapter $1$]{SzThesis}, we consider a torus $T$ in $G$, 
and denote its Lie algebra by $\mathfrak{t}$.
Then we define subalgebras
of $\mathfrak{g}$ by
\begin{eqnarray*}
\mathfrak{g}_T&=&\set{\alpha \in \mathfrak{g} | [\alpha, \beta]=0 \text{ for all } \beta\in \mathfrak{t}},\\
\mathfrak{g}_{T^\perp}&=&\set{\alpha \in \mathfrak{g}_T | \langle\alpha, \beta\rangle=0 \text{ for all } \beta\in \mathfrak{t}},
\end{eqnarray*}
where $\langle\cdot, \cdot\rangle$ is a rational invariant inner product.
We denote the image of $\mathfrak{g}_T$ (resp. $\mathfrak{g}_{T^\perp}$) 
under the exponential map by $G_T$ (resp. $G_{T^\perp}$).
\begin{definition}\cite{SzThesis}\rm \
Let $T$ be a torus in $G$ fixing the point $v$. 
Then $v$ is said to be \emph{semistable (resp. stable, unstable) relative to $T$} if
it is $G_{T^\perp}$-semistable (resp. stable, unstable).
\end{definition}

The Hilbert-Mumford criterion says that $v \in \mathbb P(\mathbf{V})$ is $G$-semistable if and only if  
$v$ is $H$-semistable for  
any maximal algebraic torus $H\subset G$ {\cite[p.$137$]{Dolga03}}.
When $G$ itself is isomorphic to an algebraic torus, the above stabilities can be described by the weight polytopes of the actions as follows.
Let $\chi(G)$ denote the character group of $G$. Then $\chi(G)$ consists of algebraic homomorphisms
$\chi: G\longrightarrow \C^{*}$. If we fix an isomorphism $G\cong (\C^{*})^{N+1}$, we may
express each $\chi$ as a Laurent monomial
\[
\chi(t_{1},\dots, t_{N+1})=t_{1}^{a_{1}}\cdots t_{N+1}^{a_{N+1}},\quad 
t_i \in \C^{*}, \; a_i\in \mathbb{Z}.
\]
Thus, there is the identification between $\chi(G)$ and $\mathbb{Z}^{N+1}$ by
$\chi=(a_{1},\dots,a_{N+1})\in \mathbb{Z}^{N+1}$.
Then it is well-known that ${\mathbf{V}}$ decomposes under the action of $G$ into weight spaces
\[
{\mathbf{V}}=\bigoplus_{\chi \in \chi(G)}{\mathbf{V}}_{\chi}, \qquad
{\mathbf{V}}_{\chi}:=\set{v^{\ast}\in {\mathbf{V}} | t\cdot v^{\ast} = \chi(t)\cdot
v^{\ast},\; t\in G}.
\]
\begin{definition}\rm
Let $v^{\ast}$ be a nonzero vector in ${\mathbf{V}}$ with $v^{\ast}=\sum_{\chi\in \chi(G)}v_{\chi}$, $v_{\chi}\in {\mathbf{V}}_{\chi}$.
The \emph{weight polytope} of $v^{\ast}$ (with respect to $G$-action) is the  
convex lattice polytope in $\chi(G)\otimes \R\cong \R^{N+1}$
defined by
\[
\mathcal{N}_G(v^{\ast}):={\mathrm{Conv}}\set{\chi\in \chi(G)|v_{\chi}\neq
0}\subseteq \R^{N+1}.
\]
\end{definition}

According to \cite[Theorem $9.2$]{Dolga03}, $v^{\ast}$ is $G$-semistable (resp. stable) if and only if $0\in\mathcal{N}_{G}(v^{\ast})$ (resp. $0\in\mathrm{Int}\: \mathcal{N}_{G}(v^{\ast})$).
In the relative stability setting, we also have the following.
\begin{proposition}\cite[Theorem $1.5.2$]{SzThesis}
Let $T$ be a torus in $G$ fixing the point $v$.
$v$ is semistable (resp. stable) relative to $T$ if and only if the orthogonal projection of the origin
onto the minimal affine subspace containing $\mathcal{N}_G(v^{\ast})$ is in $\mathcal{N}_G(v^{\ast})$
$(resp, {\mathrm{relint}}\, \mathcal{N}_G(v^{\ast}))$.
\end{proposition}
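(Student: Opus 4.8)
The plan is to reduce the claim to the non-relative weight-polytope criterion recorded just above (Dolgachev, Theorem $9.2$) applied to the subtorus $G_{T^\perp}$, and then to convert that criterion into a statement about $\mathcal{N}_G(v^\ast)$ itself by a short Euclidean-geometry argument. First I would unwind the definition: by definition $v$ is semistable (resp.\ stable) relative to $T$ precisely when $v^\ast$ is $G_{T^\perp}$-semistable (resp.\ stable). Since $G$ is a torus we have $\mathfrak{g}_T=\mathfrak{g}$, so $\mathfrak{g}_{T^\perp}$ is simply the orthogonal complement $\mathfrak{t}^\perp$ of $\mathfrak{t}$ for the fixed invariant inner product, and $G_{T^\perp}$ is again an algebraic torus. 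Hence the torus criterion applies: $v^\ast$ is $G_{T^\perp}$-semistable iff $0\in\mathcal{N}_{G_{T^\perp}}(v^\ast)$, and $G_{T^\perp}$-stable iff $0$ lies in the interior.

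Next I would identify the auxiliary weight polytope with an orthogonal projection of the original one. The restriction map on characters $\chi(G)\to\chi(G_{T^\perp})$ is dual to the inclusion $\mathfrak{g}_{T^\perp}\hookrightarrow\mathfrak{g}$, which under the inner product is exactly the orthogonal projection $\pi\colon\chi(G)\otimes\R\to\mathfrak{t}^\perp$. Consequently the $G_{T^\perp}$-weights of $v^\ast$ are the $\pi$-images of its $G$-weights, so $\mathcal{N}_{G_{T^\perp}}(v^\ast)=\pi(\mathcal{N}_G(v^\ast))$, and relative semistability becomes the condition $0\in\pi(\mathcal{N}_G(v^\ast))$.

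The geometric heart of the argument is the passage back to $\mathcal{N}_G(v^\ast)$. Because $T$ fixes $v$, every weight $\chi$ with $v_\chi\neq0$ restricts to one and the same character of $T$; equivalently, all of them share a common orthogonal projection $\tau\in\mathfrak{t}$. Thus $\mathcal{N}_G(v^\ast)$, and with it its affine hull $A$, lies in the slice $\tau+\mathfrak{t}^\perp$, on which $\pi$ acts as the translation $x\mapsto x-\tau$, an isometry. This isometry carries $\mathcal{N}_G(v^\ast)$ onto $\pi(\mathcal{N}_G(v^\ast))$ and the foot $\mathrm{proj}_A(0)$ of the perpendicular from the origin to $\mathrm{proj}_{\pi(A)}(0)$. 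For the Chow forms under consideration $\mathcal{N}_G(v^\ast)$ is full-dimensional, so $\pi(A)=\mathfrak{t}^\perp\ni0$, giving $\mathrm{proj}_{\pi(A)}(0)=0$ and hence $\mathrm{proj}_A(0)=\tau$. Therefore $0\in\pi(\mathcal{N}_G(v^\ast))$ if and only if $\mathrm{proj}_A(0)=\tau\in\mathcal{N}_G(v^\ast)$, which is the semistable statement; the stable statement follows verbatim, the isometry sending interior to interior and thus $\mathrm{relint}$ to $\mathrm{relint}$.

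The step I expect to be the main obstacle is precisely this last identification: one must check carefully that projecting orthogonally onto $\mathfrak{t}^\perp$ intertwines ``the origin lies in the projected polytope'' with ``the foot of the perpendicular from the origin to the affine hull lies in the polytope.'' The hypothesis that $T$ fixes $v$ is exactly what forces $A$ to be a translate of $\mathfrak{t}^\perp$ and makes $\pi|_A$ an isometry; without it the two conditions genuinely differ. The only subtlety beyond this is to ensure $\mathrm{proj}_A(0)=\tau$, i.e.\ that the affine hull of $\pi(\mathcal{N}_G(v^\ast))$ passes through the origin; this is automatic once the weight polytope is full-dimensional and is the reason the statement is phrased through the affine hull and the relative interior rather than through the origin directly.
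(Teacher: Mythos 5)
Your opening moves are correct and are in fact exactly the technique this paper itself relies on: the paper gives no proof of this proposition (it is quoted from Sz\'ekelyhidi's thesis), but its proof of Theorem \ref{th:relative chow polytope} runs precisely your argument — reduce to $G_{T^\perp}$-stability by definition, note $\mathfrak{g}_{T^\perp}=\mathfrak{t}^\perp$ since $G$ is a torus, apply Dolgachev's criterion, and identify $\mathcal{N}_{G_{T^\perp}}(v^{\ast})$ with the orthogonal projection $\pi\bigl(\mathcal{N}_G(v^{\ast})\bigr)$ onto $\mathfrak{t}^\perp$. Together with the observation that $T$ fixing $v$ forces all weights to share one projection $\tau\in\mathfrak{t}$, so that $\pi$ acts on the slice $\tau+\mathfrak{t}^\perp$ as the translation $x\mapsto x-\tau$, what you have genuinely proved is: $v$ is semistable relative to $T$ if and only if $\tau\in\mathcal{N}_G(v^{\ast})$, equivalently $\mathfrak{t}\cap\mathcal{N}_G(v^{\ast})\neq\emptyset$.

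The gap is the last step, where you replace $\tau$ by the orthogonal projection $p_A$ of the origin onto the affine hull $A$ of $\mathcal{N}_G(v^{\ast})$. These coincide exactly when $\tau\in A$, and your justification — ``for the Chow forms under consideration $\mathcal{N}_G(v^{\ast})$ is full-dimensional'' — fails twice. First, the proposition is a general GIT statement about an arbitrary $v$ fixed by an arbitrary subtorus $T$; its proof may not appeal to Chow forms. Second, the claim is false even in this paper's application: by Proposition \ref{prop:GKZ} the affine hull of $\mathrm{Ch}(\D)$ has codimension $n+1$ in $\R^{N+1}$, while a translate of $\mathfrak{t}^\perp$ has codimension $2$, so for $n\geqslant 2$ the weight polytope is never full-dimensional in the slice $\tau+\mathfrak{t}^\perp$. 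The discrepancy is not cosmetic. Take $G=(\C^{\ast})^3$ acting on $\C^2$ by $(t_1,t_2,t_3)\cdot(av_1+bv_2)=t_1t_3\,a\,v_1+t_1t_2t_3\,b\,v_2$ (weights $(1,0,1)$ and $(1,1,1)$), $v^{\ast}=v_1+v_2$, and $T$ the third factor, which fixes $[v^{\ast}]$. Then $G_{T^\perp}=\{(t_1,t_2,1)\}$ sends $v^{\ast}$ to $t_1v_1+t_1t_2v_2$, whose orbit closure contains $0$, so $v$ is unstable relative to $T$; yet $A=\{(1,s,1):s\in\R\}$ and $p_A=(1,0,1)\in\mathcal{N}_G(v^{\ast})$, so your criterion would declare $v$ semistable. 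To close the argument you need a hypothesis forcing $\tau\in A$, e.g.\ that $\mathfrak{t}$ is the full orthogonal complement of the direction space $A-A$ (as when $T$ is a maximal torus fixing $v$), which gives $A=\tau+\mathfrak{t}^\perp$; some such assumption is implicit in the quoted source and must be made explicit. Finally, in the stable case your ``isometry sends interior to interior'' remark does not bridge Dolgachev's ambient-interior criterion with the relative interior asserted here: since $\pi\bigl(\mathcal{N}_G(v^{\ast})\bigr)$ need not be full-dimensional in $\mathfrak{t}^\perp$, you need the polystable (closed-orbit) version of the torus criterion, stated with $\mathrm{relint}$, rather than the one quoted in the paper.
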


Next we define Chow form and Chow stability
of irreducible projective varieties.
See \cite{GKZ94,Yotsu15} for more details.
Let $X\subset \C P^N$ be an $n$-dimensional irreducible
complex projective variety of degree $d$. Recall that the
Grassmann variety $\mathbb{G}(k,\C P^N)$ parameterizes $k$-dimensional
projective linear subspaces of $\C P^N$.
The \emph{associated hypersurface} of $X\subset \C P^N$ is the
subvariety in $\mathbb{G}(N-n-1, \C P^N)$ which is given by
\[
Z_X:=\set{W \in \mathbb{G}(N-n-1,\C P^N)| W\cap X\neq \emptyset}.
\]
It is known that is $Z_X$ is an irreducible hypersurface with $\deg Z_X=d$ in the Pl\"ucker coordinates. In particular,
$Z_X$ is given by the vanishing of a section $R_X^{\ast}\in H^0(\mathbb{G}(N-n-1,\C P^N),\mathcal{O}(d))$.
We call $R_X^{\ast}$ the \emph{Chow form} of $X$. Note that
$R_X^{\ast}$ is well defined up to a multiplicative constant. Let
${\mathbf{V}}:=H^0(\mathbb{G}(N-n-1,\C P^N),\mathcal{O}(d))$ and $R_X \in \P({\mathbf{V}})$ be the
projectivization of $R_X^{\ast}$. We call $R_X$ the \emph{Chow point} of
$X$. The weight polytope of $R_X^{\ast} \in {\mathbf{V}}$ with respect to 
the action $(\C^{*})^{N+1}\subset  {\mathrm{GL}}(N+1,\C)$ of diagonal matrices 
is called \emph{Chow polytope} of $X$, 
and is denoted by ${\mathrm{Ch}}(X)$. See \cite[Chapter $6$]{GKZ94} for more details.
Since we have the natural action of $G={\rm{SL}}(N+1,\C)$ into
$\P({\mathbf{V}})$, we can define stabilities of $R_X$ as follows.
\begin{definition}\rm
Let $X\subset \C P^N$ be an irreducible, $n$-dimensional
complex projective variety. Then $X$ is said to be \emph{Chow
semistable (resp. stable, unstable)} if the Chow point $R_X$ of $X$ is
${\rm{SL}}(N+1,\C)$-semistable (resp. stable, unstable).
\end{definition}

We consider relative Chow stability 
when the Futaki invariant does not vanish.  
Choose $T={\beta}$ to be the $\mathbb C^*$- action  induced by  the extremal vector field $V$. 
$T$ also acts  on $\P(\mathbf{V})$.

\begin{definition}\label{def:relativeChowsta}\rm
Let $X\subset \C P^N$ be an irreducible, $n$-dimensional
complex projective variety. Then $X$ is said to be \emph{relatively Chow
semistable (resp. stable, unstable)} if the Chow point $R_X$ of $X$ is
$\mathrm{SL}(N+1,\C)$-semistable (resp. stable, unstable) relative to $T$.
\end{definition}

\begin{definition}\label{def:AsymrelativeChowsta}\rm
Let $(X,L)$ be a polarized variety. For $i\gg 0$,
let $\Psi_i:X\longrightarrow \P(H^0(X,L^i)^{\ast})$ be
the Kodaira embedding.
\begin{enumerate}
\item Suppose that $L$ is very ample. $(X,L)$ is said to be {\emph{relatively Chow semistable (resp. stable, unstable)}}
if $\Psi_{1}(X)\subset  \P(H^0(X,L)^{\ast})$ is relatively Chow semistable (resp. stable, unstable).

\item $(X,L)$ is called \emph{asymptotically relatively Chow semistable (resp. stable)} if there is an $i_0$ such that 
$\Psi_i(X)$ is relatively Chow 
semistable (resp. stable) for each $i \geqslant i_0$.
\end{enumerate}
We say that $(X,L)$ is {\emph{asymptotically relatively Chow unstable}} if it is not asymptotically relatively 
Chow semistable.
\end{definition}


\subsection{Reduction on toric manifolds}\label{chow-reduction}

We reduce the relative Chow stability of polarized toric manifolds by developing on an idea 
in \cite{Ono11, Ono13}.

Recall that 
$T_{\D}^{\C}$  of $\mathrm{SL}(N+1,\C)$ is given by
\begin{align*}
T_{\D}^{\C} &\hookrightarrow G=(\C^{*})^{N+1}\cap \mathrm{SL}(N+1,\C)\\
(t_1,\dots,t_{N})&\longmapsto (t_1,\dots,t_{N},(t_1\cdots t_{N})^{-1}).
\end{align*}
In particular, $T_{\D}^{\C}\cong (\C^{*})^{N}$.
We view the Lie algebra of $T^\C_{\D}$ as a subalgebra of $sl(N+1,\C)$
by considering the traceless part. The inner product $\langle ,\rangle$ on $sl(N+1,\C)$
is given by $\langle A, B\rangle=Tr(AB)$.
Let $\set{\mathbf{a}_1,\dots, \mathbf{a}_{N+1}}$ be all the lattice points in $\D$.
We define
\[
\bar \theta_{\D} =\frac{1}{N+1}\displaystyle\sum_{j=1}^{N+1}\theta_\D(\mathbf{a}_j).
\]
Let $\theta_\D$ be the potential function  as
in Section $\ref{sec:prelim}$, and  
$T=\beta$ be the $\mathbb C^*$- action  induced by $V$. Then $T$ is given in $T_{\D}^{\C}$ by
\begin{align}\label{map:1-psExtK}
\begin{split}
T:\C^{*}&\hookrightarrow G \\
t\;&\longmapsto \left(t^{(\theta_\D({\mathbf{a}_1})-\bar{\theta}_{\D})},\dots,t^{(\theta_\D(\mathbf{a}_{N+1})-\bar{\theta}_{\D})} \right).
\end{split}
\end{align}

Let $\mathrm{Ch}(\D)$ be the Chow polytope of $X_{\D}\subset \C P^{N}$. 
In the literature of Gelfand-Kapranov-Zelevinsky's theory, $\mathrm{Ch}(\D)$
coincides with the secondary polytope \cite{KSZ92}. 
In particular, it is known that the affine span of the secondary polytope
is given by the following.
 
\begin{proposition}\cite[Chapter 7, Proposition $1.11$]{GKZ94}\label{prop:GKZ}
Let $\varphi=(\varphi_1,\dots,\varphi_{N+1})$ be a point in the affine hull of $\mathrm{Ch}(\D)$ in 
$\chi (G)\otimes \R\cong \R^{N+1}$.
Then 
\begin{eqnarray}
\sum_{j=1}^{N+1}\varphi_j=(n+1)!\mathrm{Vol}(\D), \label{eq:secpoly1}\ \ \ 
\sum_{j=1}^{N+1}\varphi_j\mathbf{a}_j=(n+1)!\int_{\D}{\mathbf{x}}\, dx.
\end{eqnarray}
Here ${\mathbf{x}}=(x_1,\dots ,x_n)$ 
and $\D \cap M=\set{{\mathbf{a}}_1,\dots,{\mathbf{a}}_{N+1}}$ is all the lattice points in $\D$.
\end{proposition}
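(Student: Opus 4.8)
The plan is to reduce the claim to a finite verification at the vertices of $\mathrm{Ch}(\D)$. Both asserted identities are affine-linear conditions on $\varphi\in\chi(G)\otimes\R\cong\R^{N+1}$, so they hold on the entire affine hull of $\mathrm{Ch}(\D)$ as soon as they hold at every vertex. Thus the first step is to invoke the Gelfand--Kapranov--Zelevinsky description of those vertices: under the identification of the Chow polytope of the projective toric variety $X_\D$ with the secondary polytope of the configuration $\D\cap M=\set{\mathbf{a}_1,\dots,\mathbf{a}_{N+1}}$, each vertex of $\mathrm{Ch}(\D)$ is a GKZ vector $\phi_T$ attached to a regular triangulation $T$ of $\D$, with coordinates
\[
(\phi_T)_j=\sum_{\substack{\sigma\in T\\ \mathbf{a}_j\in\mathrm{vert}(\sigma)}} n!\,\vol(\sigma),
\]
where $\sigma$ runs over the $n$-dimensional simplices of $T$ and $\vol(\sigma)$ denotes Euclidean volume, so that $n!\,\vol(\sigma)$ is the normalized lattice volume.

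Second, I would check the two identities directly on $\phi_T$ by summing over the simplices of $T$. For the first, interchanging the order of summation gives
\[
\sum_{j=1}^{N+1}(\phi_T)_j=\sum_{\sigma\in T} n!\,\vol(\sigma)\,\#\mathrm{vert}(\sigma)=(n+1)\,n!\sum_{\sigma\in T}\vol(\sigma)=(n+1)!\,\Vol(\D),
\]
since each $n$-simplex has $n+1$ vertices and the simplices of $T$ cover $\D$ with overlaps of measure zero. For the second, the same interchange together with the elementary centroid identity $\int_\sigma \mathbf{x}\,dx=\frac{\vol(\sigma)}{n+1}\sum_{\mathbf{a}_j\in\mathrm{vert}(\sigma)}\mathbf{a}_j$ yields
\[
\sum_{j=1}^{N+1}(\phi_T)_j\,\mathbf{a}_j=n!\sum_{\sigma\in T}\vol(\sigma)\sum_{\mathbf{a}_j\in\mathrm{vert}(\sigma)}\mathbf{a}_j=(n+1)!\sum_{\sigma\in T}\int_\sigma \mathbf{x}\,dx=(n+1)!\int_\D \mathbf{x}\,dx.
\]
Both right-hand sides are independent of $T$, so the two equations hold at every vertex and hence on the affine hull.

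The genuinely substantial input is the first step: the GKZ identification of the vertices of $\mathrm{Ch}(\D)$ with the regular-triangulation vectors $\phi_T$, and in particular the precise volume normalization in the coordinate formula. This is exactly \cite[Chapters 7--8]{GKZ94}, and it is what I would import rather than reprove; the hard part is really this structural description, not the two displayed computations, whose only nontrivial ingredient is additivity of volume and of the first moment over a triangulation. I would also stress that the proposition asserts only that the affine hull is \emph{contained} in the common solution set of the two equations, not equality, so no dimension count or independence argument for the constraints is required.
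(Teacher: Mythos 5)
Your proposal is correct. Note, however, that the paper contains no proof of this statement at all: it is imported verbatim as a citation to \cite[Chapter 7, Proposition 1.11]{GKZ94}, so the comparison here is really between your argument and the one in Gelfand--Kapranov--Zelevinsky. Your route is essentially theirs (for the containment direction, which is all the paper asserts): since both conditions are affine-linear, it suffices to verify them at the vertices of $\mathrm{Ch}(\D)$; by the Kapranov--Sturmfels--Zelevinsky identification of the Chow polytope of $X_\D\subset \C P^N$ with the secondary polytope of the configuration $\D\cap \mathfrak M$, these vertices are the GKZ vectors $\phi_T$ of regular triangulations; and the two identities then follow from additivity of volume and of the first moment over the simplices of $T$, together with the centroid formula $\int_\sigma \mathbf{x}\,dx=\frac{\vol(\sigma)}{n+1}\sum_{v\in\mathrm{vert}(\sigma)}v$. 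Your computations are right, including the bookkeeping that GKZ's normalized volume is $n!$ times Lebesgue volume, which is exactly where the factor $(n+1)!\,\mathrm{Vol}(\D)$ in \eqref{eq:secpoly1} comes from; this transparency about normalizations is a genuine benefit of writing the proof out, since the paper later relies on the precise constant $\frac{(n+1)!\,\mathrm{Vol}(\D)}{N+1}$ in Theorem \ref{th:relative chow polytope} and in \eqref{assy-rel-chow}. You are also right that only containment of the affine hull in the solution set is claimed, so no dimension count is needed; the one substantial ingredient you import (vertices of the secondary polytope and its identification with the Chow polytope) is unavoidable and is in any case strictly less than what the paper imports by citing the proposition whole.
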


Denote  
$$d_{\D}=(1,\dots,1),\ \ \tilde{\theta}_{\D}=((\theta_\D\left(\mathbf{a}_1\right)-\bar \theta_{\D}),\dots,(\theta_\D\left(\mathbf{a}_{N+1}\right)-\bar \theta_{\D}))$$ in $\chi (G)\otimes \R$.  Then we have the following.

\begin{theorem}\label{th:relative chow polytope} 
$(X_\D, L_\D)$ is relatively Chow semistable in the toric sense if and only if
 there exists $s\in \R$ such that
\begin{equation}\label{eq:relChowss}
\frac{(n+1)!\mathrm{Vol}(\D)}{N+1}\left(d_{\D}+s\tilde{\theta}_{\D}\right)\in \mathrm{Ch}(\D).
\end{equation}
Furthermore, it is relatively Chow stable in the toric sense if 
\begin{equation}\label{eq:relChows}
\frac{(n+1)!\mathrm{Vol}(\D)}{N+1}\left(d_{\D}+s\tilde{\theta}_{\D}\right)\in \mathrm{Int}(\mathrm{Ch}(\D)).
\end{equation}
\end{theorem}

\begin{proof}
We define a two dimensional subspace in $\R^{N+1}$ by
$\mathbf{W}:=\mathrm{Span}_{\R}\set{d_{\D},\tilde{\theta}_{\D}}$.
Let $\beta_1,\dots, \beta_{N-1}\in \R^{N+1}$ be 
a basis of the subspace perpendicular to ${\mathbf{W}}$.
Note that $G_{T^{\perp}}$ is isomorphic to $(\C^{*})^{N-1}$.
Considering the projection
\begin{align*}
\pi_{G_{T^{\perp}}}: \chi(G)\otimes \R\cong \R^{N+1}& \longrightarrow \chi(G_{T^{\perp}})\otimes \R \cong \R^{N-1} \\
\varphi=(\varphi_1,\dots ,\varphi_{N+1}) &\longmapsto (\bracket{\varphi, \beta_1} , \dots ,\bracket{\varphi, \beta_{N-1}} ),
\end{align*}
we observe that  $\mathcal{N}_{G_{T^{\perp}}}(R_{X_\D})=\pi_{G_{T^{\perp}}}(\mathcal{N}_G(R_{X_\D}))\subset \R^{N-1}$.

By definition, 
$R_{X_\D}$ is $G_{T^\perp}$-semistable if and only if
${0}\in \mathcal{N}_{G_{T^{\perp}}}(R_{X_{\D}})$. By the projection above, it is equivalent to $\mathbf{W}\cap\mathrm{Ch}(\D)\neq \emptyset$,
that is, there exist $s_1,s_2\in \R$ such that

\begin{equation}\label{eq:relChowss1}
s_1d_{\D}+s_2\tilde{\theta}_{\D}\in \mathrm{Ch}(\D).
\end{equation}
By $\eqref{eq:secpoly1}$ and the fact $\sum_{j=1}^{N+1}\tilde{\theta}_{\D}(\mathbf{a}_j)=0$, 
we have 
$s_1=\frac{(n+1)!{\mathrm{Vol}}(\D)}{N+1}$.
\end{proof}
\begin{remark}\rm

The reader should notice that $\eqref{eq:relChowss}$ implies 
\begin{equation}\label{eq:secpoly22}
\sum_{j=1}^{N+1}\mathbf{a}_j+s\sum_{j=1}^{N+1}
\tilde{\theta}_\D({\mathbf{a}})
\mathbf{a}_j=\frac{N+1}{\mathrm{Vol}(\D)}\int_\D \mathbf{x}\, dx,
\end{equation}  by Proposition $\ref{prop:GKZ}$.
This theorem extends Ono's description of Chow semistability \cite{Ono11, Ono13} to relative case.
\end{remark}

Next, we consider asymptotic relative Chow semistability.
Denote the Ehrhart polynomial of $\D$ by $E_{\D}(t)$.
It has degree $n=\dim \D$ and satisfies
\[
E_{\D}(i)=\mathrm{Card} (i\D \cap \Z^n)=\mathrm{Card} (\D \cap (\Z/i)^n)
\] 
for 
any positive integer $i\in \Z_+$. Moreover,  
$E_{\D}(t)$ has the form
\[
E_{\D}(t)=\mathrm{Vol} (\D)t^n+\frac{\mathrm{Vol} (\p \D)}{2}t^{n-1}+\dots +1,
\]
by theorem of Ehrhart.
Note that $\displaystyle \int_{i\D}\mathbf x\, dx=i^{n+1}\int_\D\mathbf x\, dx$, 
hence Theorem \ref{def:ARCSStoric} follows 
from the same argument in the proof of Theorem $\ref{th:relative chow polytope}$.

The asymptotic relative Chow semistability can be related to relative $K$-semistability in the toric sense. 
For this purpose, we recall some notations. 
For a fixed $\varphi\in W(i\D)$(see $\eqref{eq:SetOfCharFun}$ for the definition of $W(i\D)$), let 
\[
G_\varphi=\text{the convex hull of \  } \bigcup_{\mathbf{a}\in \D\cap (\Z/i)^n} \{(\mathbf{a}, t) \ |  \
t \leqslant \varphi(\mathbf{a})\}\subset \mathfrak M_\R\times \R \cong \R^{n+1}.
\]
Then we define a 
piecewise linear function $g_{\varphi}: \D \longrightarrow \R$ by
\[
g_\varphi(\mathbf{x})=\max\{ t\  |\  (\mathbf{x}, t)\in G_\varphi\}.
\]
The upper boundary of $G_{\varphi}$ can be regarded as the graph of $g_{\varphi}$. 
Furthermore, $g_{\varphi}$ has the following properties. 
\begin{lemma}[\cite{GKZ94} p.$221$, Lemma $1.9$]\label{lem:gkz}
For any $\varphi \in W(i\D)$,
\begin{enumerate}
\item[(a)] the function $g_{\varphi}$ is concave.
\item[(b)] we have the equality
\[
\max\{ \langle\varphi, \psi\rangle\ |\ \psi\in \mathrm{Ch}(i\D)\}=i^n(n+1)! \int_\D g_\varphi\, dx.
\]
\end{enumerate}
\end{lemma}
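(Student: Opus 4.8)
The plan is to handle the two parts separately: part (a) is immediate from the convexity of $G_\varphi$, while part (b) is the Gelfand--Kapranov--Zelevinsky support-function identity, which I would prove by reducing to generic lifting vectors and matching both sides over the resulting regular triangulation. For (a), observe that $G_\varphi$ is by construction the convex hull of a finite union of downward vertical rays, hence a closed convex subset of $\R^{n+1}$ whose image under projection to the first $n$ coordinates is $\D=\mathrm{Conv}(\D\cap(\Z/i)^n)$ and which is bounded above in the last coordinate; its upper boundary is exactly the graph of $g_\varphi$. If $\mathbf{x}_0=\lambda\mathbf{x}_1+(1-\lambda)\mathbf{x}_2$ with $\lambda\in[0,1]$, then the points $(\mathbf{x}_1,g_\varphi(\mathbf{x}_1))$ and $(\mathbf{x}_2,g_\varphi(\mathbf{x}_2))$ lie in $G_\varphi$, so their convex combination $(\mathbf{x}_0,\lambda g_\varphi(\mathbf{x}_1)+(1-\lambda)g_\varphi(\mathbf{x}_2))$ does as well, and the defining maximum forces $g_\varphi(\mathbf{x}_0)\geq\lambda g_\varphi(\mathbf{x}_1)+(1-\lambda)g_\varphi(\mathbf{x}_2)$, i.e. concavity.

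For (b), I would first rescale by $\mathbf{x}\mapsto i\mathbf{x}$, identifying $\D\cap(\Z/i)^n$ with the integral configuration $A=i\D\cap\Z^n\subset i\D$; the Jacobian $i^n$ of this substitution produces precisely the factor $i^n$ in the statement, so the claim reduces to the classical identity $\max\{\langle\varphi,\psi\rangle\mid\psi\in\mathrm{Ch}(A)\}=(n+1)!\int_{i\D}g_\varphi$. Both sides are continuous and positively homogeneous in $\varphi$: the left side manifestly, as the support function of $\mathrm{Ch}(A)$, and the right side because $g_\varphi(\mathbf{x})=\max\{\sum_{\mathbf{a}}\lambda_{\mathbf{a}}\varphi(\mathbf{a})\mid\sum_{\mathbf{a}}\lambda_{\mathbf{a}}\mathbf{a}=\mathbf{x},\ \lambda\geq 0,\ \sum_{\mathbf{a}}\lambda_{\mathbf{a}}=1\}$ is a pointwise maximum of linear functionals of $\varphi$. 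By continuity it therefore suffices to verify the identity on the dense set of \emph{generic} $\varphi$, those for which the upper faces of $G_\varphi$ project onto a regular (coherent) triangulation $T_\varphi$ of $A$.

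Fixing such a $\varphi$, I would compute both sides through $T_\varphi$. On the right, the (rescaled) $g_\varphi$ is affine on each maximal simplex $\sigma\in T_\varphi$, equal there to the interpolation of the values $\varphi(\mathbf{a})$ at the vertices of $\sigma$; since the average of an affine function over a simplex is its barycentric value, $\int_\sigma g_\varphi=\frac{\mathrm{Vol}(\sigma)}{n+1}\sum_{\mathbf{a}\in\mathrm{vert}(\sigma)}\varphi(\mathbf{a})$, and summing over $\sigma$ gives $(n+1)!\int_{i\D}g_\varphi=\sum_{\mathbf{a}}\varphi(\mathbf{a})\big(n!\sum_{\sigma\ni\mathbf{a}}\mathrm{Vol}(\sigma)\big)$. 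On the left, the vertex of $\mathrm{Ch}(A)$ maximizing $\langle\varphi,\cdot\rangle$ is, by the GKZ correspondence between regular triangulations and vertices of the Chow (secondary) polytope, the lattice point $\psi_{T_\varphi}$ whose $\mathbf{a}$-coordinate equals $n!\sum_{\sigma\ni\mathbf{a}}\mathrm{Vol}(\sigma)$. Hence $\max_{\psi}\langle\varphi,\psi\rangle=\langle\varphi,\psi_{T_\varphi}\rangle$ agrees with the right-hand expression, and the identity follows on the generic locus, hence everywhere.

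The main obstacle is exactly this last structural input: that the face of $\mathrm{Ch}(A)$ selected by a generic $\varphi$ is the one attached to the regular triangulation $T_\varphi$, with coordinates given by the normalized simplex-volume sums $n!\sum_{\sigma\ni\mathbf{a}}\mathrm{Vol}(\sigma)$. Establishing this is the heart of the GKZ and Kapranov--Sturmfels--Zelevinsky theory: one studies the flat limit of the Chow form under the one-parameter subgroup $t\mapsto\mathrm{diag}(t^{\varphi(\mathbf{a})})$, shows that its initial form factors according to $T_\varphi$, and reads off the resulting weight. Once this is granted, the remaining ingredients --- the concavity in (a), the simplex-average identity, and the continuity/density reduction --- are all elementary, so the difficulty is entirely concentrated in the combinatorial description of the Chow polytope.
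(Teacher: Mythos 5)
The paper offers no proof of this lemma at all: it is imported verbatim from Gelfand--Kapranov--Zelevinsky \cite{GKZ94} (Chapter 7, Lemma 1.9), combined implicitly with the Kapranov--Sturmfels--Zelevinsky identification of the Chow polytope of a projective toric variety with the secondary polytope \cite{KSZ92}. So there is no internal argument to compare against, and your proposal has to be judged as a reconstruction of the cited result; as such it is correct. Part (a) is complete, and in part (b) the rescaling (which accounts exactly for the factor $i^n$), the continuity-and-density reduction to generic $\varphi$, the simplex-average identity $\int_\sigma g_\varphi=\frac{\mathrm{Vol}(\sigma)}{n+1}\sum_{\mathbf{a}\in\mathrm{vert}(\sigma)}\varphi(\mathbf{a})$, and the normalization $n!\sum_{\sigma\ni\mathbf{a}}\mathrm{Vol}(\sigma)$ of the vertex coordinates are all sound. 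The one point worth flagging is where you place the load: you invoke the fine form of the KSZ theorem --- that the vertex of $\mathrm{Ch}(i\D)$ selected by a generic $\varphi$ is the GKZ vector of the regular triangulation $T_\varphi$, proved via the flat limit of the Chow form --- which is legitimate and non-circular, but strictly stronger than what GKZ's own proof of Lemma 1.9 requires. Their argument avoids genericity entirely: for \emph{every} triangulation $T$ with vertices in the point configuration one has the interpolation identity $\langle\varphi,\varphi_T\rangle=(n+1)!\int g_{T,\varphi}\,dx$ (your simplex-average step applied to $T$), together with the pointwise bound $g_{T,\varphi}\leq g_\varphi$, with equality when $T$ is the subdivision induced by $\varphi$ (refined to a triangulation if necessary); maximizing over the defining vertices of the secondary polytope then yields the identity for all $\varphi$ at once, so that only the coarse statement $\mathrm{Ch}(i\D)=\Sigma(A)$ as polytopes is needed from the Chow-form side. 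Your route buys a more geometric picture --- the maximizer is the Chow point of the toric degeneration --- at the cost of a density argument and a heavier citation, which is a reasonable trade but worth being aware of.
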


Denote
$PL(\D, i)=\{ g_\varphi\ | \ \varphi\in W(i\D)\}.$
The proof of Theorem \ref{rel-pl} is similar to \cite{Ono13}.
One can see that the condition $\eqref{assy-rel-chow}$ holds if and only if the following condition holds:
\begin{equation}\label{eq:assy-rel-chow2}
\max\{ \langle\varphi, \psi\rangle\ |\ \psi\in \mathrm{Ch}(i\D)\}\geqslant \frac{i^n(n+1)!\mathrm{Vol}(\D)}{E_{\D}(i)}\langle\varphi, 1+s_i\tilde\theta_{i\D}\rangle
\end{equation}
for all $\varphi\in W(i\D)$.
By applying Lemma $\ref{lem:gkz}$ to $\eqref{eq:assy-rel-chow2}$, we obtain $\eqref{eq:funcQ}$.

\begin{corollary}\label{cor:necAsymprelChowss}
If $(X_\D, L_\D)$ is asymptotically relatively Chow semistable,
then for any $i\in \Z_+$, there exists $s_i$ such that 
\begin{equation}\label{eq:secpoly2-2}
\sum_{\mathbf{a}\in \D\cap (\Z/i)^n}i\mathbf{a}+s_i\sum_{\mathbf{a}\in \D\cap (\Z/i)^n}
(\theta_\D(\mathbf{a})-\bar{\theta}_{i\D})
\tilde{\theta}_{i\D}(\mathbf{a})\mathbf{a}=\frac{iE_{\D}(i)}{\mathrm{Vol}(\D)}\int_\D \mathbf{x}\, dx.
\end{equation}
\end{corollary}

\begin{proof}
Since $(X_\D, L_\D)$ is asymptotically relatively Chow semistable, $(X_\D, L_\D^i)$ is relatively Chow semistable
for $i \gg 0$. By Theorem $\ref{rel-pl}$, $\mathcal Q_\D(i,g)\geqslant 0$ for all $g\in PL(\D, i)$.
Taking $g$ to affine linear functions, one can see that $\mathcal Q_\D(i,g)=0$.
Note that $\mathcal Q_\D(i,g)$ can be written as a fractional polynomial in $i$ as in $\eqref{eq:funcQ-Asymp}$.
We prove the assertion by contradiction. If $\eqref{eq:secpoly2-2}$ does not hold for an integer $i_0\in \Z_+$,
(i.e. $\mathcal Q_\D(i_0, \mathbf{x})\neq 0 $), then the identity theorem for polynomial functions implies that
there is an integer $i_1\in \Z_+$ such that $\mathcal Q_\D(i, \mathbf{x})\neq 0$ for any $i> i_1$.
This means $(X_\D, L_\D^i)$ is relatively Chow unstable for $i>i_1$, that is, $(X_\D, L_\D)$ is asymptotically
relatively Chow unstable. The assertion is proved.
\end{proof}

\begin{remark}\label{rem:necAsympChow}\rm
(1) One can see that if $s_i$ exists, then \eqref{s-explicit} follows from $\eqref{eq:secpoly2-2}$.
(2) It is clear that $\eqref{eq:secpoly2-2}$ is an over-determined linear system since 
there is only one parameter $s_i$, but $n$ equations. Hence, one can expect to
find counter-examples from polytopes which are not symmetric with respect to $x_1,\ldots, x_n$. (Cf. Proposition $\ref{prop:relChowunst3}$).
(3) When $V=0$, $\eqref{eq:secpoly2-2}$ becomes
\begin{equation}\label{eq:necChowss}
\mathrm{Vol}(\D)s_\D(i)-E_\D(i)\int_\D{\mathbf{x}}\;dx=\mathbf 0
\end{equation}
which is the necessary condition for $(X_\D, L_\D^i)$ to be Chow semistable proved by Ono\cite[Theorem $1.4$]{Ono11}. 
From the argument in the proof of Corollary $\ref{cor:necAsymprelChowss}$, one can see that the following:
let $\D \subseteq \mathfrak M_\R$ be a simple lattice polytope and $(X_\D, L_\D)$ be the associated polarized toric orbifold. 
If $\eqref{eq:necChowss}$ does not hold for a positive integer $i_0\in \Z_+$, then there is a positive integer $i_1\in \Z_+$, such that
$(X_\D,L_\D^i)$ is Chow unstable for any $i>i_1$.
\end{remark}



Finally we show asymptotical relative Chow semistability implies relative $K$-semistability in the toric case. 
It is known that asymptotic Chow semistability implies $K$-semistability in general sense\cite{RT07}. 
First, we need the following lemma.
\begin{lemma}\label{lem:secpoly}
$s_i$ in (\ref{eq:secpoly2-2}) satisfies
$s_i=-\frac{1}{2}+O(i^{-1})$.
\end{lemma}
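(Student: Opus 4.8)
The plan is to determine the asymptotic behavior of $s_i$ by exploiting the over-determined system \eqref{eq:secpoly2-2} together with the known polynomial expansions of the quantities involved. The starting point is to recognize that \eqref{eq:secpoly2-2} must be compatible (since we are assuming asymptotic relative Chow semistability, by Corollary \ref{cor:necAsymprelChowss} such an $s_i$ exists), and that one can extract $s_i$ from any one of the $n$ component equations, or more robustly, by pairing \eqref{eq:secpoly2-2} against a cleverly chosen linear functional to isolate $s_i$. Concretely, I would take the scalar product of both sides of \eqref{eq:secpoly2-2} with the vector $\tilde{\theta}_{i\D}$ itself (thought of as a weight $\mathbf{a}\mapsto \tilde\theta_{i\D}(\mathbf{a})$), which kills no information and produces a scalar equation of the form
\begin{equation*}
\sum_{\mathbf{a}} i\mathbf{a}\cdot\tilde\theta_{i\D}(\mathbf{a}) + s_i \sum_{\mathbf{a}}\tilde\theta_{i\D}(\mathbf{a})\,\mathbf{a}\cdot\tilde\theta_{i\D}(\mathbf{a}) = \frac{iE_\D(i)}{\mathrm{Vol}(\D)}\int_\D \mathbf{x}\,dx \cdot \langle\text{something}\rangle,
\end{equation*}
from which $s_i$ is solved as a ratio of two sums over lattice points of $\D\cap(\Z/i)^n$.

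The key technical step is then an Euler–Maclaurin / Riemann-sum analysis: each sum $\sum_{\mathbf{a}\in\D\cap(\Z/i)^n} F(\mathbf{a})$ admits an asymptotic expansion of the form $i^n\left(\int_\D F\,dx + \tfrac{1}{2i}\int_{\partial\D}F\,d\sigma + O(i^{-2})\right)$ where the leading term is the Riemann integral and the first correction involves the boundary. First I would substitute the definitions $d_{i\D}(\mathbf{a})=1$ and $\tilde\theta_{i\D}(\mathbf{a})=\frac{\theta_\D(\mathbf{a})-\bar\theta_{i\D}}{i}$ (note the crucial $1/i$ factor), using that $\theta_\D$ is affine linear so that $\tilde\theta_{i\D}$ is itself (up to the $1/i$ scaling and the centering constant $\bar\theta_{i\D}$) essentially a rescaled affine function. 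Because of the normalization $\int_\D\theta_\D\,dx=0$, the average $\bar\theta_{i\D}\to 0$ as $i\to\infty$ with a controlled rate. Carrying the expansions to the order $O(i^{-1})$ in both numerator and denominator of the ratio defining $s_i$, the leading-order integral terms combine — using $\int_\D(x_j+c_j)\,dx=0$ from Lemma \ref{lemma:extremal} and the relation $\sum_{\mathbf{a}}\tilde\theta_{i\D}(\mathbf{a})=0$ — and the surviving balance between the boundary integral correction and the bulk integral should force the leading constant of $s_i$ to be exactly $-\tfrac12$.

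I expect the main obstacle to be bookkeeping the Euler–Maclaurin corrections precisely enough to pin down the constant $-\tfrac12$ rather than merely showing $s_i=O(1)$. The $\tfrac12$ is a boundary-term artifact: it arises from the mismatch between $\sum_{\mathbf{a}}\mathbf{a}$ (whose expansion has a $\tfrac{1}{2}\int_{\partial\D}\mathbf{x}\,d\sigma$ correction) and the target $\frac{E_\D(i)}{\mathrm{Vol}(\D)}\int_\D\mathbf{x}\,dx$ (whose expansion inherits the $\tfrac{\mathrm{Vol}(\partial\D)}{2\mathrm{Vol}(\D)}i^{-1}$ term from the Ehrhart polynomial's subleading coefficient). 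The delicate point is that the factor $i$ multiplying $\sum\mathbf{a}$ in \eqref{eq:secpoly2-2} and the factor $1/i$ hidden in $\tilde\theta_{i\D}$ conspire so that both the $d_{i\D}$ term and the $s_i\tilde\theta_{i\D}$ term contribute at the \emph{same} order; matching the $O(i^{n})$ and $O(i^{n-1})$ coefficients of the vector identity simultaneously yields a $2\times 1$ (hence over-determined but consistent) linear system whose solution is $s_i=-\tfrac12+O(i^{-1})$. I would verify the constant by testing against the simplest component and confirming that the $-\tfrac12$ is dimension- and polytope-independent, exactly because it comes solely from the universal Euler–Maclaurin boundary coefficient.
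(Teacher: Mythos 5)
Your overall framework---Euler--Maclaurin expansions of the lattice sums ($\sum_{\mathbf a}F(\mathbf a)=i^n\int_\D F\,dx+\tfrac{i^{n-1}}{2}\int_{\p\D}F\,d\sigma+O(i^{n-2})$) followed by matching coefficients in $i$---is exactly the paper's framework, but your argument has a genuine gap at the decisive step: nowhere do you invoke the fact that $\theta_\D$ is the potential of the \emph{extremal} vector field, i.e.\ that the functional \eqref{linearfunc} annihilates affine functions,
\[
\int_{\p \D}x_k\, d\sigma-\int_\D \left(\frac{\mathrm{Vol}(\p \D)}{\mathrm{Vol}(\D)}+\theta_\D \right)x_k\,dx=0 ,\qquad k=1,\dots,n,
\]
which is \eqref{eq:lin_fun}. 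After the expansions, the $i^{n}$-level balance of \eqref{eq:secpoly2-2} reads
\[
\frac12\int_{\p\D}\mathbf x\,d\sigma+s_i\int_\D\theta_\D\,\mathbf x\,dx
=\frac{\mathrm{Vol}(\p\D)}{2\,\mathrm{Vol}(\D)}\int_\D\mathbf x\,dx+O(i^{-1}),
\]
so the ``mismatch'' you correctly identify between the two boundary corrections equals $-\tfrac12\bigl[\int_{\p\D}\mathbf x\,d\sigma-\bar S\int_\D\mathbf x\,dx\bigr]$, a polytope-dependent vector with no a priori relation to $\int_\D\theta_\D\,\mathbf x\,dx$. Only the extremal equation converts it into $-\tfrac12\int_\D\theta_\D\,\mathbf x\,dx$, giving $(\tfrac12+s_i)\int_\D\theta_\D x_k\,dx=O(i^{-1})$ and hence $s_i=-\tfrac12+O(i^{-1})$. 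The normalizations you do cite ($\int_\D(x_j+c_j)\,dx=0$, $\sum_{\mathbf a}\tilde\theta_{i\D}(\mathbf a)=0$, $\int_\D\theta_\D\,dx=0$) do not suffice, and your closing claim that the constant is polytope-independent ``because it comes solely from the universal Euler--Maclaurin boundary coefficient'' is precisely where the proof fails. Indeed, in your own pairing version the computation yields $s_i=-\frac{\int_{\p\D}\theta_\D\,d\sigma}{2\int_\D\theta_\D^2\,dx}+O(i^{-1})$, and this equals $-\tfrac12$ if and only if $\int_{\p\D}\theta_\D\,d\sigma=\int_\D\theta_\D^2\,dx$, which is again the extremal equation $\mathcal L_\D(\theta_\D)=0$; it is false for a generic zero-mean affine function (replace $\theta_\D$ by $2\theta_\D$ and the ratio changes from $-\tfrac12$ to $-\tfrac14$).

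A secondary problem: your ``pairing'' of \eqref{eq:secpoly2-2} with $\tilde\theta_{i\D}$ is not a well-defined operation. Equation \eqref{eq:secpoly2-2} is an identity between vectors in $\R^n$ whose sums over $\D\cap(\Z/i)^n$ are already evaluated; you cannot re-insert the weights $\tilde\theta_{i\D}(\mathbf a)$ inside each summand, because the scalar product $\langle\cdot,\cdot\rangle$ lives on $W(i\D)\cong\R^{E_\D(i)}$, not on $\R^n$ where the identity holds (your right-hand side, left as ``$\int_\D\mathbf x\,dx\cdot\langle\text{something}\rangle$'', reflects this). The repair is to dot the $\R^n$-valued identity with the coefficient vector $(a_1,\dots,a_n)$ of $\theta_\D$: since $\theta_\D$ is affine, $v\cdot\mathbf a=\theta_\D(\mathbf a)-c$, which produces your weighted sums up to constants that drop out by the normalizations. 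That fix is fine---and is a legitimate scalar alternative to the paper's componentwise comparison---but it does not close the real gap above: the resulting ratio is pinned to $-\tfrac12$ only after the extremal equation is applied.
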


\begin{proof}
By $\displaystyle \int_\D \theta_\D\, dx=0$
and Lemma $3.3$ of \cite{ZZ08}, we have
\begin{align*}
\bar{\theta}_{i\D}&= \frac{i^{n-1}}{2E_\D(i)}\int_{\p\D}\theta_\D d\sigma+O(i^{n-2}),\\[3pt]
\sum_{{\bf a}\in \D\cap (\Z / i)^n} i\mathbf a&=i^{n+1}\int_\D \mathbf{x}\, dx+\frac{i^{n}}{2}\int_{\p \D}\mathbf x\, d\sigma
+O(i^{n-1}),\qquad \text{and}\\
\sum_{{\bf a}\in \D\cap (\Z/i)^n} \theta_{\D}(\mathbf a) \mathbf a&= i^{n}\int_\D \theta_\D \mathbf x\, dx +O(i^{n-1}).
\end{align*}
Then $\eqref{eq:secpoly2-2}$ is written as
\begin{eqnarray*}
 &&{\hspace{-1.5cm}{iE_\D(i)\int_\D \mathbf x\, dx
=\mathrm{Vol}(\D)\left(i^{n+1}\int_\D \mathbf x\, dx+\frac{i^n}{2} \int_{\p \D} \mathbf x\, d\sigma
 +O(i^{n-1})\right)}} + s_i\mathrm{Vol}(\D)\cdot\\[3pt]
&&{\hspace{-0.5cm}{\left[i^{n}\int_\D \theta_\D \mathbf x\, dx+O(i^{n-1})-\frac{i^n\int_\D \mathbf x\, dx+O(i^{n-1})}{E_\D (i)}\left(\frac{i^{n-1}}{2}\int_{\p \D}\theta_\D\, d\sigma+O(i^{n-2})\right)\right].}} 
\end{eqnarray*}
Comparing the coefficient of $i^n$, we conclude that
\[
\mathrm{Vol}(\D)\left[\frac{1}{2} \int_{\p \D} \mathbf x\, d\sigma+
s_i\int_\D \theta_\D \mathbf x\, dx\right]= \frac{1}{2} \mathrm{Vol}(\p \D)\int_{\D} \mathbf x\, dx.
\]
Since $\theta_\D$ is the potential function of the extremal vector field, it holds
\[
\int_{\p \D}x_k\, d\sigma-\int_\D \left(\frac{\mathrm{Vol}(\p \D)}{\mathrm{Vol}(\D)}+\theta_\D \right)x_k\,dx=0 \quad \text{for} \quad
 k=1,\ldots, n.
\]
Hence we have $s_i=-\frac{1}{2}+O(i^{-1})$. 
\end{proof}

Now we prove Theorem $\ref{thm:relChowKss}$.
\begin{proof}[Proof of Theorem $\ref{thm:relChowKss}$]
For any $i\in\Z_+$ and $g\in PL(\D,i)$,
by Lemma \ref{lem:secpoly},
\begin{align*}
\begin{split}
\mathcal Q_\D(i, g)=& E_\D(i)\int_\D g\, dx -\mathrm{Vol}(\D) \sum_{\mathbf{a}\in \D \cap (\Z / i)^n} \left(-\frac{\theta_{\D}(\mathbf{a})-\bar\theta_{i\D}}{2i}+1+O(i^{-2})\right)g(\mathbf{a})\\
=&\left(\mathrm{Vol}(\D)i^{n}+\frac{\mathrm{Vol}(\p \D)}{2}i^{n-1}+O(i^{n-2})\right)\int_\D g\, dx 
+i^{n-1}\mathrm{Vol}(\D) 
\int_\D \frac{\theta_{\D}-\bar\theta_{i\D}}{2}g\, dx \\
&+O(i^{n-2})-\left(i^{n}\mathrm{Vol}(\D)\int_\D g\, dx+\frac{i^{n-1}{\mathrm{Vol}(\D)}}{2}\int_{\partial \D}g\, d\sigma+O(i^{n-2})\right).
\end{split}
\end{align*}
Note that $\bar{\theta}_{i\D}=O(i^{-1})$, then

\begin{equation} \label{eq:funcQ-Asymp}
\begin{split}
\mathcal Q_\D(i, g)
=&-\frac{\mathrm{Vol}(\D)}{2}\left[\int_{\p \D}g\, d\sigma-\int_\D \left(\frac{\mathrm{Vol}(\p \D)}{\mathrm{Vol}(\D)}+\theta_{\D}\right)g\, dx\right] i^{n-1}+O(i^{n-2}) \\
=&-\frac{\mathrm{Vol}(\D)}{2}\mathcal L_\D(g) i^{n-1}+O(i^{n-2}).
\end{split}
\end{equation}

By the assumption, $\mathcal Q_\D(i, g)\geq 0$ when $i$ is sufficiently large.
This implies $\mathcal L_\D(g)\leq 0$. Hence, $\mathcal L_\D(u)\geq 0$ for any piecewise linear convex function $u$. The theorem is proved.
\end{proof}


\section{Reduction of relative Chow stability: an alternative approach}\label{redu-2}

The asymptotic Chow stability can also be described through the Chow weight of $\C^*$-actions induced by test configurations  \cite{D02, RT07}.  In this section, we derive an alternative reduction of relative Chow stability of toric manifolds by investigating the normalized weight  in \cite{RT07} for toric degenerations.

First, we recall the notion of a test configuration \cite{D02}.
%
A {\it test configuration} for a polarized scheme $(X, L)$ is a polarized scheme $(\mathfrak X, \mathfrak L)$
with:
\begin{itemize}
\item a $\C^*$-action and a proper flat morphism $\pi: \mathfrak X\to \C$ which is $\C^*$-equivariant
for the usual action on $\C$,
\item a $\C^*$-equivariant line bundle $\mathfrak L\to \mathfrak X$ which is ample over all fibers of 
$\pi$ such that for $z\neq 0$, $(X, L)$ is isomorphic to $(\mathfrak X_z, \mathfrak L_z)$, $\mathfrak L_z=\mathfrak L|_{\mathfrak X_z}$.
\end{itemize}
$(\mathfrak X, \mathfrak L)$ is called {\it product} if $\mathfrak X=X\times \C$, 
and {\it trivial} if in addition
the $\C^*$ acts only on $\C$.
\vskip 8pt

It is shown in \cite{RT07} that, for any $i\in \Z_+$, the data of a test configuration for $(X, L^i)$ gives 
a $\mathbb C^*$- action
of $GL(d_i,\C)$ and vice versa, where $d_i=\dim H^0(X, L^i)$. Let $(\mathfrak X_0, \mathfrak L_0)$ be
the central fiber. For any $r\in \Z_+$,
let $k=ri$. Let 
$w(k)=w(ri)$  be the total weight of the induced $\C^*$-action on 
$H^0(\mathfrak X_0, \mathfrak L_0^r)$.
As in \cite{RT07}, we define the normalized weight $\tilde w_{i,k}$ by 
\begin{equation}\label{norm-weight}
\tilde w_{i, k}=w(k)id_i-w(i)kd_k.
\end{equation}
By general algebraic theory, $\tilde w_{i, k}$ is a polynomial of  degree $n+1$ in $k$, for $k \gg0$.
Write 
\[\tilde w_{i, k}=\displaystyle\sum_{j=1}^{n+1}e_{j}(i)k^j.
\] 
Then the leading coefficient $e_{n+1}(i)$ 
is the {\it Chow weight}.  
Then using the Hibert-Mumford criterion for the $\C^*$-actions, Chow stability is described as follows.
\begin{theorem}{(\cite{RT07})}
A polarized variety $(X, L)$ is Chow stable with respect to $i$ if for any nontrivial test configuration for $(X, L^i)$, $e_{n+1}(i)>0$.  $(X, L)$ is asymptotically Chow stable if there exists $i_0$ such that for $i\geqslant i_0$, any nontrivial test configuration for $(X, L^i)$ has $e_{n+1}(i)>0$. 
\end{theorem}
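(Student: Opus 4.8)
The plan is to obtain both assertions as a direct application of the Hilbert--Mumford numerical criterion to the Chow point $R_X$, together with the Ross--Thomas dictionary between one-parameter subgroups and test configurations. Fix $i$ and use the Kodaira embedding $\Psi_i\colon X\hookrightarrow \P(H^0(X,L^i)^{*})=\CP^{d_i-1}$, so that the Chow point $R_X$ lies in $\P(\mathbf V_i)$ with $\mathbf V_i=H^0(\mathbb G(d_i-n-2,\CP^{d_i-1}),\mathcal O(\deg X))$, and $(X,L^i)$ is Chow stable exactly when $R_X$ is $\SL(d_i,\C)$-stable. By the Hilbert--Mumford criterion it is enough to test this against every nontrivial one-parameter subgroup $\lambda\colon\C^{*}\to\SL(d_i,\C)$: the point $R_X$ is stable if and only if the Mumford numerical weight $\mu(R_X,\lambda)$ is positive for all such $\lambda$.

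First I would make the correspondence with test configurations precise. Given $\lambda$, the flat limit $\lim_{t\to0}\lambda(t)\cdot X$ inside $\CP^{d_i-1}$ is the central fibre $\mathfrak X_0$, and the closure of the orbit in $\CP^{d_i-1}\times\C$ furnishes a test configuration $(\mathfrak X,\mathfrak L)$ for $(X,L^i)$; conversely every test configuration recovers such a $\lambda$ from the induced $\C^{*}$-action on $H^0(\mathfrak X_0,\mathfrak L_0)$. This is exactly the equivalence recorded in the paragraph preceding the statement, so I may freely pass between the two descriptions.

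The technical heart is to identify $\mu(R_X,\lambda)$ with the Chow weight $e_{n+1}(i)\,i^{n+1}(n+1)!$. I would compute the weight of the resultant (Chow form), viewed as the defining section of $\mathcal O(\deg X)$ over the Grassmannian, under the $\C^{*}$-action, and express it through the total weights $w(k)$ of the induced action on $H^0(\mathfrak X_0,\mathfrak L_0^{r})$, where $k=ri$. By equivariant Riemann--Roch, $w(k)$ is a polynomial of degree $n+1$ in $k$ for $k\gg0$, while $d_k=\dim H^0(X,L^k)$ has degree $n$; the normalization $\tilde w_{i,k}=w(k)\,i\,d_i-w(i)\,k\,d_k$ is precisely the device that cancels the contribution of the central scalar one-parameter subgroup, so that $\tilde w_{i,k}$ depends only on the traceless ($\SL$) part of $\lambda$. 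Writing $\tilde w_{i,k}=\sum_{j=1}^{n+1}e_j(i)k^{j}$, the Mumford weight of the Chow point then equals, up to a positive normalizing constant, the leading coefficient, namely $e_{n+1}(i)\,i^{n+1}(n+1)!$. I expect this weight computation to be the main obstacle: establishing the polynomiality and exact degree of $w(k)$, matching the leading coefficient of $\tilde w_{i,k}$ to the numerical weight of the Chow form, and keeping track of the $\GL$-to-$\SL$ normalization all require the full strength of equivariant Riemann--Roch together with the explicit description of the Chow form as the section cutting out the associated hypersurface $Z_X$.

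With this identification the conclusions are immediate. For fixed $i$, Chow stability of $(X,L^i)$ is equivalent to $\mu(R_X,\lambda)>0$ for every nontrivial $\lambda$, hence to $e_{n+1}(i)>0$ for every nontrivial test configuration for $(X,L^i)$, which is the first assertion. The asymptotic statement is then just the requirement that this positivity hold uniformly in the embedding degree, that is, that there exist $i_0$ with $e_{n+1}(i)>0$ for every nontrivial test configuration for $(X,L^i)$ whenever $i\geqslant i_0$.
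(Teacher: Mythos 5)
Your proposal is correct and is essentially the argument of the source the paper relies on: the paper gives no proof of this theorem, quoting it from Ross--Thomas \cite{RT07}, and their proof is exactly what you outline --- the Hilbert--Mumford criterion applied to the Chow point $R_X$, the dictionary between one-parameter subgroups of $\mathrm{SL}(d_i,\C)$ and test configurations for $(X,L^i)$, and Mumford's identification of $\mu(R_X,\lambda)$ with a positive multiple of the leading coefficient $e_{n+1}(i)$ of the normalized weight $\tilde w_{i,k}=w(k)id_i-w(i)kd_k$, whose role in cancelling the scalar (trace) part you describe accurately. The only caveat is that you assert a full equivalence, whereas the theorem as stated is just the one-directional implication (positivity of $e_{n+1}(i)$ for all nontrivial test configurations implies Chow stability); that is the direction your argument cleanly delivers, since a nontrivial one-parameter subgroup of $\mathrm{SL}(d_i,\C)$ cannot fix a linearly nondegenerate $X$ pointwise and hence always induces a nontrivial test configuration.
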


Now we consider toric manifolds. Recall that a piecewise linear convex function $u=\mathrm{max}\set{f_1,\ldots ,f_{\ell}}$ is called {\it rational} if $f_k=\sum a_{k,i} x_i + c_k, \ k=1,\ldots,\ell$,
for some vectors $(a_{k,1},\ldots, a_{k,n})\in \mathbb R^n$ and some numbers
$c_k\in \mathbb R$ such that all $a_{k, i}$ and $c_k$ are rational. 
According to  \cite{D02}, a {\it toric degeneration} for $(X_\D, L_\D^i)$ is a test configuration
induced by a rational piecewise  linear convex function $u$ on $\D$,
such that $iQ$ is a lattice polytope in $\R^{n+1}=\R^n\times \R$.
Here $R$ is an integer such that $u\leqslant R$ and
$$Q=\set{({\mathbf{x}},t) |  {\mathbf{x}}\in \D,\  0<t<R-u({\mathbf{x}})}.$$
Denote the set of all rational piecewise  linear convex functions satisfying the above condition 
by $PL_i(\D)_\Q$.
\begin{lemma}\label{chow-weight-toric}
The Chow weight of the toric degeneration for $(X_\D, L_\D^i)$ induced by $u$
is given by  $e_{n+1}(i)=-i\mathcal P_\D(i, u)$, where
\begin{eqnarray}\label{chow-weight}
\mathcal P_\D(i, u)=E_\D(i)\int_\D u \,dx -\mathrm{Vol} (\D) \sum_{\mathbf{a}\in \D \cap (\Z / i)^n} u(\mathbf{a}).
\end{eqnarray}
\end{lemma}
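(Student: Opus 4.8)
The plan is to compute the total weight $w(k)=w(ri)$ of the $\C^*$-action on the central fiber cohomology directly from the combinatorics of the toric degeneration, and then feed this into the normalization $\eqref{norm-weight}$. The key structural fact, going back to Donaldson's construction, is that for a toric degeneration induced by a rational piecewise linear convex function $u$, the central fiber $(\mathfrak X_0,\mathfrak L_0)$ decomposes according to the lattice points of the polytope $i\D$, and the induced $\C^*$-action acts on the summand attached to a lattice point $\mathbf{a}\in i\D\cap\Z^n$ with weight essentially governed by the value $u(\mathbf{a}/i)$ (up to the shift by $R$ coming from the definition of $Q$). First I would make this precise: the space $H^0(\mathfrak X_0,\mathfrak L_0^r)$ has a basis indexed by lattice points of $r(i\D)=ri\D$, and the weight of the monomial attached to $\mathbf{a}\in ri\D\cap \Z^n$ is the integer $\lfloor$ or the height function $\rfloor$ prescribed by $u$ scaled appropriately. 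Summing these weights gives $w(ri)=\sum_{\mathbf{a}\in ri\D\cap\Z^n}\bigl(\text{weight at }\mathbf{a}\bigr)$, which is a lattice-point sum that by Ehrhart-type expansion is a polynomial in $k=ri$.

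The second step is to extract the leading coefficient. Writing $w(k)=\sum_{\mathbf{a}}(\text{weight})$ and replacing the lattice-point sum by its dominant integral term, the top-order behaviour of $w(k)$ in $k$ is controlled by $\int_{\D} u\,dx$ (for the $u$-dependent part) together with the volume term coming from the constant $R$. The normalization $\tilde w_{i,k}=w(k)\,i\,d_i-w(i)\,k\,d_k$ is designed precisely to cancel the $R$-dependence and the lower-order bulk contributions, isolating the genuinely $u$-dependent quantity. I would substitute the polynomial expansions $d_k=E_\D(k)=\mathrm{Vol}(\D)k^n+\cdots$ and the analogous expansion of $w(k)$ into $\eqref{norm-weight}$, then read off the coefficient $e_{n+1}(i)$ of $k^{n+1}$. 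The claim to verify is that this coefficient equals $-i\bigl(E_\D(i)\int_\D u\,dx-\mathrm{Vol}(\D)\sum_{\mathbf{a}\in\D\cap(\Z/i)^n}u(\mathbf{a})\bigr)=-i\mathcal P_\D(i,u)$, where the continuous integral $\int_\D u\,dx$ arises from the leading term of $w(k)$ and the discrete sum $\sum_{\mathbf{a}}u(\mathbf{a})$ arises from the $w(i)d_k$ cross-term after rescaling lattice points of $i\D$ to $\D\cap(\Z/i)^n$.

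The main obstacle, as I see it, is the careful bookkeeping of the two different scales. There is the ``fine'' scale $k=ri$ at which one computes genuine lattice-point counts, and the ``coarse'' scale $i$ at which one interprets the answer as a Riemann-type sum over $\D\cap(\Z/i)^n$; keeping the factors of $i$ consistent through the normalization $\eqref{norm-weight}$ is where sign errors and off-by-a-power-of-$i$ mistakes naturally occur. A secondary technical point is justifying that the weight attached to each lattice point is exactly $u$ evaluated there (rather than some rounded or shifted version), which requires identifying the grading on $H^0(\mathfrak X_0,\mathfrak L_0^r)$ coming from the $\C^*$-action on the polytope $iQ\subset\R^{n+1}$; this is standard toric geometry but must be pinned down so that the integral and sum in $\eqref{chow-weight}$ appear with the correct normalizing constants $E_\D(i)$ and $\mathrm{Vol}(\D)$. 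Once these scales are reconciled, the identity $e_{n+1}(i)=-i\mathcal P_\D(i,u)$ follows by comparing top-degree coefficients, and I would note that this mirrors the computation in \cite{RT07, D02}, so the argument is essentially a relative/rescaled version of the classical toric Chow weight formula.
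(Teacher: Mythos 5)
Your plan matches the paper's proof essentially step for step: the paper quotes from \cite{D02, ZZ08} the exact formulas $d_i=E_\D(i)$, $d_k=E_\D(k)$, $w(i)=i\sum_{\mathbf{a}\in \D\cap(\Z/i)^n}(R-u)(\mathbf{a})$, and the expansion $w(k)=k^{n+1}\int_\D (R-u)\,dx+\tfrac{k^n}{2}\int_{\p\D}(R-u)\,d\sigma+\cdots$, substitutes them into \eqref{norm-weight}, and reads off the coefficient of $k^{n+1}$, with the $R$-dependence cancelling exactly as you predict (since $\int_\D R\,dx\cdot E_\D(i)=\mathrm{Vol}(\D)\sum_{\mathbf{a}}R$). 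The only difference is that you propose to re-derive these lattice-point weight formulas from the toric geometry of $iQ$, whereas the paper simply cites them; the skeleton of the computation is identical.
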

\begin{proof}
By computation \cite{D02, ZZ08}, 
\begin{eqnarray*}
d_i&=&E_\D(i),\\
d_k&=& E_\D(k)=\mathrm{Vol} (\D)k^n+\frac{\mathrm{Vol} (\p \D)}{2}k^{n-1}+\dots +1,\\
w(i)&=&i \sum_{\mathbf{a}\in \D \cap (\Z / i)^n} (R-u)(\mathbf{a}),\\
w(k)&=&k^{n+1} \int_\D (R-u) \,dx +\frac{k^n}{2}\int_{\partial\D} (R-u) \,d\sigma +\cdots.
\end{eqnarray*}
Substituting them into (\ref{norm-weight}), we get
\begin{eqnarray}
e_{n+1}(i)&=&iE_\D(i)\int_\D (R-u) \,dx -\mathrm{Vol} (\D) i\sum_{\mathbf{a}\in \D \cap (\Z / i)^n} (R-u)(\mathbf{a})\nonumber\\
&=& i\left[E_\D(i)\int_\D (-u) \,dx -\mathrm{Vol} (\D) \sum_{\mathbf{a}\in \D \cap (\Z / i)^n} (-u)(\mathbf{a})\right].\nonumber
\end{eqnarray}
\end{proof}

More generally, for the purpose of relative stability, i.e. when $V\neq 0$, we consider the toric degenerations  perpendicular to the $\C^*$-action $\beta$ induced by $V$.
We use the notations of toric data in Section \ref{sec:Chowsta}.
Following \cite{Mab14}, we view the Lie algebra of $T^\C_{i\D}$ as a subalgebra of $sl(E_\D(i), \mathbb C)$
by considering the traceless part. The inner product $\langle ,\rangle_i$ on $sl(E_\D(i), \mathbb C)$
is given by 
\begin{equation}\label{inner-product}
\langle A, B\rangle_i=\frac{Tr(AB)}{i^{n+2}}.
\end{equation}
Now let $\alpha$ be the $\C^*$-action on the central fiber induced by the toric degeneration.
According to \cite{ZZ08}, the infinitesimal generators of $\alpha$ and $\beta$ are
\[
\mathrm{diag}(i(R-u)(\mathbf{a}_1), \ldots, i(R-u)(\mathbf{a}_{E_\D(i)})),\ \mathrm{diag}(i\theta_\D(\mathbf{a}_1), \ldots, i\theta_\D(\mathbf{a}_{E_\D(i)}))
\]
respectively. By considering the traceless parts of them with (\ref{inner-product}), we call the toric degeneration is {\it perpendicular} to $\beta$ if 
\begin{equation}\label{orth}
\sum_{\mathbf{a}\in \D \cap (\Z / i)^n} (R-u)(\mathbf{a}) (\theta_\D(\mathbf{a})-\bar\theta_{i\D})=\sum_{\mathbf{a}\in \D \cap (\Z / i)^n} u(\mathbf{a}) (\theta_\D(\mathbf{a})-\bar\theta_{i\D})=0.
\end{equation}
In view of (\ref{chow-weight}), we have:

\begin{definition}\rm\label{definition:relK}
$(X_\Delta, L_\D)$ is called {\it  asymptotically relatively Chow semistable for toric degenerations} 
if there exists $i_0\in \Z_+$, such that when $i\geqslant i_0$,  the Chow weight of
any toric degeneration for $(X_\D, L_\D^i)$ which is perpendicular to $\beta$  is nonnegative. 
 Furthermore, it is called {\it asymptotically relatively Chow stable for toric degenerations} if
the Chow weight is positive for any nontrivial toric degeneration for $(X_\D, L_\D^i)$.
\end{definition}

\begin{proposition}\label{weakchow}
$(X_\Delta, L_\D)$ is asymptotically relatively Chow semistable for toric degenerations if and only if $\eqref{eq:secpoly2-2}$ holds and there exists $i_0\in \Z_+$, such that when $i\geqslant i_0$, for any $u\in PL_i(\D)_\Q$, $\mathcal Q_\D(i, u)\leqslant 0$.
\end{proposition}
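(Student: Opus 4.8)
The plan is to connect the relative Chow weight $e_{n+1}(i)$ of a toric degeneration to the functional $\mathcal Q_\D(i, u)$, using the constraint \eqref{orth} that characterizes degenerations perpendicular to $\beta$. The starting point is Lemma \ref{chow-weight-toric}, which expresses the \emph{non-relative} Chow weight as $e_{n+1}(i) = -i\mathcal P_\D(i,u)$. The key observation is that the functionals $\mathcal P_\D(i, \cdot)$ and $\mathcal Q_\D(i, \cdot)$ differ only by the term $s_i \mathrm{Vol}(\D)\sum_{\mathbf a} \tilde\theta_{i\D}(\mathbf a)\, u(\mathbf a)$, and this term \emph{vanishes precisely on the perpendicularity locus} cut out by \eqref{orth}. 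Indeed, $\tilde\theta_{i\D}(\mathbf a) = (\theta_\D(\mathbf a) - \bar\theta_{i\D})/i$, so the orthogonality condition \eqref{orth} says exactly that $\sum_{\mathbf a} u(\mathbf a)\tilde\theta_{i\D}(\mathbf a) = 0$. Hence for $u \in PL_i(\D)_\Q$ satisfying \eqref{orth}, we have the identity $\mathcal Q_\D(i, u) = \mathcal P_\D(i, u)$, and therefore the relative Chow weight equals the ordinary Chow weight, $e_{n+1}(i) = -i\mathcal P_\D(i, u) = -i\mathcal Q_\D(i, u)$.

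Given this identity, the proof is short. First I would recall that by Definition \ref{definition:relK} (the second one, for toric degenerations), asymptotic relative Chow semistability for toric degenerations means that there is $i_0$ such that for all $i \geqslant i_0$, the Chow weight of every toric degeneration perpendicular to $\beta$ is nonnegative. Second, I would fix such a degeneration, induced by some $u \in PL_i(\D)_\Q$ satisfying \eqref{orth}, and invoke the reduction to the relative setting: one must check that the $\C^*$-action $\alpha$ from the degeneration, after projecting off the $\beta$-direction via the traceless normalization and the inner product \eqref{inner-product}, contributes a Chow weight that matches $e_{n+1}(i)$ computed for the component orthogonal to $\beta$. Because \eqref{orth} forces the $\alpha$-generator to be already orthogonal to the $\beta$-generator in the inner product \eqref{inner-product}, the relative Chow weight is literally the ordinary Chow weight of Lemma \ref{chow-weight-toric}. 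Third, combining the hypothesis $\mathcal Q_\D(i, u) \leqslant 0$ with the identity $e_{n+1}(i) = -i\mathcal Q_\D(i, u)$ gives $e_{n+1}(i) = -i\mathcal Q_\D(i, u) \geqslant 0$ for all $i \geqslant i_0$, which is exactly the required nonnegativity.

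The role of the assumption \eqref{eq:secpoly2-2} deserves a separate remark, and this is where I would be most careful. The functional $\mathcal Q_\D(i, \cdot)$ as defined in \eqref{eq:funcQ} has the constant $s_i$ built in, and \eqref{eq:secpoly2-2} is precisely the linear condition ensuring that $s_i$ is chosen so that the origin (equivalently, the relevant barycenter condition) is correctly positioned; without it the identification of $e_{n+1}(i)$ with $-i\mathcal Q_\D(i, u)$ across \emph{all} perpendicular $u$ would fail because $\mathcal Q_\D(i, \cdot)$ would not annihilate affine linear functions. So I would first verify, as in Corollary \ref{cor:necAsymprelChowss}, that \eqref{eq:secpoly2-2} guarantees $\mathcal Q_\D(i, g) = 0$ for affine linear $g$, which is the normalization needed for the Chow weight of a \emph{trivial} degeneration to vanish. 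This makes the equivalence between the sign of $e_{n+1}(i)$ and the sign of $-\mathcal Q_\D(i, u)$ consistent with Definition \ref{definition:relK}.

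The step I expect to be the main obstacle is justifying cleanly that the relative Chow weight of a perpendicular toric degeneration coincides with the ordinary Chow weight from Lemma \ref{chow-weight-toric}, rather than merely differing from it by a bounded-order error. The subtlety is that the relative weight is defined via the traceless projection orthogonal to $\beta$ in the inner product \eqref{inner-product}, and one needs the perpendicularity \eqref{orth} to hold at the level of generators to conclude that no $\beta$-component is being subtracted off. I would spell this out by writing the generator of $\alpha$ as $\mathrm{diag}(i(R-u)(\mathbf a_j))$ and its traceless, $\beta$-orthogonal projection, then showing via \eqref{orth} that the projection equals the traceless part of $\alpha$ itself; the leading coefficient in $k$ of the associated normalized weight $\tilde w_{i,k}$ then reproduces exactly the expression in \eqref{chow-weight}. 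Once this alignment is established, the remaining algebra is the routine manipulation already carried out in the proof of Lemma \ref{chow-weight-toric}.
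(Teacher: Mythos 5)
Your proposal is correct and follows essentially the same route as the paper's proof: both rest on Lemma \ref{chow-weight-toric} together with the observation that $\mathcal Q_\D(i,\cdot)$ and $\mathcal P_\D(i,\cdot)$ differ only by the term $s_i\,\mathrm{Vol}(\D)\sum_{\mathbf{a}\in \D\cap(\Z/i)^n}\tilde\theta_{i\D}(\mathbf{a})u(\mathbf{a})$, which $\eqref{orth}$ annihilates, so the Chow weight of a degeneration perpendicular to $\beta$ equals $-i\mathcal Q_\D(i,u)\geqslant 0$. The only real difference is that the paper runs this computation for a general $u$ through its projection $\tilde u$ onto the perpendicular locus, thereby also recovering $\eqref{eq:secpoly2-2}$ and the explicit formula $\eqref{s-explicit}$ for $s_i$, whereas you specialize to perpendicular $u$ from the outset (where no explicit knowledge of $s_i$ is needed), which is all the stated proposition requires.
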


\begin{proof}
We consider the projection of $u$ onto the perpendicular space. 
Let
\[
\tilde u=u-\frac{\displaystyle\sum_{\mathbf{a}\in \D \cap (\Z / i)^n} u(\mathbf{a}) (\theta_\D(\mathbf{a})-\bar\theta_{i\D})}{\displaystyle\sum_{\mathbf{a}\in \D \cap (\Z / i)^n}  (\theta_\D(\mathbf{a})-\bar\theta_{i\D})^2} (\theta_\D-\bar\theta_{i\D}).
\] 
Then there exist $r\in \Z_+$ such that $r\tilde u$ induces a toric degeneration for $(X_\D, L_\D^i)$ perpendicular to $\beta$. 
By (\ref{chow-weight}), we have
\begin{eqnarray*}
\mathcal P_\D(i, r\tilde u)
&=&E_\D(i)\int_\D r\tilde u \,dx -\mathrm{Vol} (\D) \displaystyle\sum_{\mathbf{a}\in \D \cap (\Z / i)^n} r\tilde u (\mathbf{a})\\
&=& E_\D(i)\int_\D ru\,dx- \mathrm{Vol} (\D)  \sum_{\mathbf{a}\in \D \cap (\Z / i)^n}  (1+s_i\tilde\theta_{i\D}(\mathbf{a}))ru(\mathbf{a})
=r\mathcal Q_\D(i, u),
\end{eqnarray*}
where  $s_i$ is given by \eqref{s-explicit},
and the normalization condition $\int_\D \theta_\D\,dx=0$ is used. 
\end{proof}

\begin{remark}\rm
The sign differs from \eqref{eq:funcQ} because we consider convex functions here, while in \eqref{eq:funcQ} $g$ is a concave function.  Futhermore, one can see that if the nonpositivity of $\mathcal Q_\D(i, \cdot)$ is strengthened for all function in $PL(\D, i)$, it corresponds to the asymptotic relative Chow semistability in the toric sense. The necessary condition  $\eqref{eq:secpoly2-2}$ for asymptotic relative Chow semistability in the 
toric sense in the last section can also be recovered by
substituting $u=x_j$ and $-x_j$ into the above for $j=1,\ldots, n$.
\end{remark}


\section{Examples}\label{sec:example}

Finally we provide many interesting examples 
which will support our understanding of stabilities. 
Here we will mainly concentrate on toric Fano manifolds.


\subsection{Relative $K$-stability of toric Fano threefolds}\label{sec:computation}

In this section, we shall determine the potential $\theta_\D$ of 
the extremal vector field $V$ of toric Fano threefolds in symplectic coordinates 
and verify the relative $K$-stability or instability by Theorem $\ref{prop:unstablecond}$.

If $V$ is given by 
$V=\sum_{i=1}^na_i\frac{\p}{\p w_i}$ in the
affine logarithm coordinates $(w_1,\dots,w_n)$, 
then the potential function $\theta_\D$  is given by 
$\theta_\D=\sum_{i=1}^n a_ix_i+c$
for some constant $c$. There are several ways to compute $\theta_\D$. The most general one is to 
use the linear functional $\eqref{linearfunc}$. 
In order to determine constants $a_i$ and $c$, one can
solve the $n+1$-linear system
\begin{equation}\label{eq:lin_fun}
\mathcal L_\D(1)=0, \; \mathcal L_\D(x_i)=0 \qquad \text{for} \qquad i=1,\dots,n.
\end{equation}

In Fano case, we have a more efficient algorithm.
By \cite{BS99}, $\varphi$ in (\ref{metric}) can be given by
\begin{equation}\label{varphiformula}
\varphi=\log\left(\sum_{i=1}^m e^{\langle p^{(i)}, y\rangle}\right),
\end{equation} 
where $p^{(1)}$,\ldots, $p^{(m)}$ are the vertices of $\Delta$. The Fano assumption implies
\begin{equation}\label{varphibound}
|\varphi+\log\det(\varphi_{ij})|<\infty.
\end{equation} 
We recall another normalization on the potentials of holomorphic vector fields.
Let $v$ be a holomorphic vector field on $X$ and $\theta'_v(\omega_g)$ 
be the potential function determined by
\begin{equation}\label{potential-normalization}
i_v\omega_g=\sqrt{-1}\bar\partial\theta'_v(\omega_g), \quad \text{and} \quad\ \int_X\theta'_v(\omega_g)e^{h_g}\frac{\omega_g^n}{n!} =0,
\end{equation}
where $h_g$ is a Ricci potential of $\omega_g$.
According to \cite{TZ02}, the above $\theta'_v$ satisfies 
\begin{equation}\label{futakiformula}
\theta'_v=-\Delta_g \theta'_v-v(h_g),
\end{equation}
where $\D_g$ is the Laplacian.  Assume
$v=\sum_{i=1}^n a_iz_i\frac{\partial}{\partial z_{i}}=\sum_{i=1}^n a_i\frac{\partial}{\partial w_{i}}$
on the torus orbit, where $a_i\in\mathbb R$. 
Then we have $\theta'_v(\omega_g)=\sum_{i=1}^na_{i}x_{i}$
in the symplectic coordinates by a simple observation from \eqref{varphiformula}. In particular, 
$\theta'_{\frac{\partial}{\partial w_i}}(\omega_g)=x_i$ for $i=1,\ldots, n$.
 This simple fact has been used in \cite{ZZ08}.
Then by \eqref{futakiformula}, 
we can compute the Futaki invariant  by
\begin{equation}\label{eq:Futaki2}
F(v)=\int_Xv(h_g)\omega^n=-\int_X\theta'_v\omega^n=-(2\pi)^2\int_{\Delta}\theta'_v dx.
\end{equation}
The first step of the algorithm is to determine $a_1$,\ldots, $a_n$ by (\ref{eq:Futaki2}) and  Lemma \ref{lemma:extremal}.
Then 
\begin{equation}\label{eq:const}
c=-\frac{1}{{\mathrm{Vol}}(\Delta)}\int_{\D}\displaystyle\sum_i a_ix_i\, dx
\end{equation}
by the normalization condition.
An alternative method to compute $a_1$,\ldots, $a_n$ was also given by Nakagawa. 
He gave a combinatorial formula for Futaki invariant and the generalized Killing form
of toric Fano orbifolds in \cite{Naka98}. 
As  an application, he computed the extremal vector field in 
the anti-canonical class on a toric Fano manifold $X$ with $\dim X \leqslant 4$.
In order to prove Theorem $\ref{thm:main}$, we shall use his result on  toric Fano threefolds 
directly.

The main goal of this section is to prove the following proposition.
\begin{proposition}\label{prop:toricFano}
Let $X$ be a toric Fano threefold with anti-canonical polarization.
\begin{enumerate}
\item[(a)] If $X=\mathcal{B}_2$, then $X$ is relatively (strongly) $K$-stable in the toric sense in the anti-canonical class.
\item[(b)] If $X=\mathcal{B}_1$, then $X$ is relatively $K$-unstable in its first Chern class.
\end{enumerate}
\end{proposition}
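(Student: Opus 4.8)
The plan is to deduce both statements from the computational criterion of Theorem~\ref{prop:unstablecond}, so the entire argument rests on determining the normalized extremal potential $\theta_\D = \sum_{i=1}^3 a_i x_i + c$ explicitly for each of the two manifolds and then reading off on which side of the threshold $\theta_\D = 1$ the moment polytope sits. Part~(a) will reduce to verifying that $\theta_\D \leqslant 1$ on all of $\Delta$ (condition~\eqref{ineq:normal_pot}), and part~(b) to verifying the instability inequality~\eqref{unstablecond}.

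First I would fix the moment polytope. Since we work in the anti-canonical class, $\Delta$ is the reflexive polytope dual to the Fano polytope of $\mathcal{B}_1$ (resp.\ $\mathcal{B}_2$), whose ray generators are tabulated in the classification \cite{B81, WW82}, and here $\bar S = n = 3$. To obtain the coefficients $a_i$ I would compute the Futaki invariants $F(\partial/\partial w_i)$ from \eqref{eq:Futaki2} and solve the linear system of Lemma~\ref{lemma:extremal}; equivalently one may quote Nakagawa's combinatorial values for toric Fano threefolds \cite{Naka98}. The constant $c$ is then pinned down by the normalization $\int_\Delta \theta_\D\,dx = 0$, that is by \eqref{eq:const}. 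Any symmetry of $\Delta$ under permutation or sign change of the coordinates forces the corresponding $a_i$ to vanish, which cuts the computation down.

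For part~(a), $X = \mathcal{B}_2$, the potential $\theta_\D$ is affine linear, so $\sup_\Delta \theta_\D$ is attained at a vertex of $\Delta$. I would evaluate $\theta_\D$ at each vertex and check that the maximum does not exceed $1$, i.e.\ that $\mathrm{Int}(\Delta^-) = \emptyset$ where $\Delta^- = \{x \in \Delta : 1 - \theta_\D < 0\}$. Theorem~\ref{prop:unstablecond}(1) then yields relative strong $K$-stability in the toric sense immediately.

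For part~(b), $X = \mathcal{B}_1$, I would instead show that $\Delta^-$ has nonempty interior, by exhibiting a point of $\Delta$ at which $\theta_\D > 1$, and then verify the strict inequality~\eqref{unstablecond}; once this is done Theorem~\ref{prop:unstablecond}(2) produces the simple piecewise linear function $u = \max\{-(1-\theta_\D), 0\}$ with $\mathcal{L}_\D(u) < 0$, giving relative $K$-instability in the first Chern class. The main obstacle will be precisely this last step: the truncated region $\Delta^-$ is the intersection of $\Delta$ with the half-space cut out by the hyperplane $\theta_\D = 1$ through its interior, so its facet structure must first be worked out before the two quantities $\mathrm{Vol}(\Delta^-)$ and $\int_{\Delta^-}(1-\theta_\D)^2\,dx$ can be integrated and compared with $1 - c$. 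Keeping this arithmetic exact rather than numerical is what makes the strict inequality~\eqref{unstablecond} reliable.
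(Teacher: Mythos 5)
Your proposal follows essentially the same route as the paper's own proof: the paper likewise applies Theorem~\ref{prop:unstablecond}, takes the coefficients $a_i$ of $\theta_\D$ from Nakagawa's tables \cite{Naka98}, fixes $c$ by \eqref{eq:const}, shows $\D^-=\emptyset$ for $\mathcal{B}_2$, and for $\mathcal{B}_1$ determines the vertices of $\D^-$ and computes $\mathrm{Vol}(\D^-)$ and $\int_{\D^-}(1-\theta_\D)^2\,dx$ in exact arithmetic to verify \eqref{unstablecond}. The only cosmetic difference is that you phrase the stability check for $\mathcal{B}_2$ as evaluating the affine function $\theta_\D$ at the vertices of $\D$, which is equivalent to the paper's direct verification that $\D^-$ is empty.
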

Once Proposition $\ref{prop:toricFano}$ has been proved,
other cases are similar and further details are left to the reader\footnote{In the practical computation 
we used packages (i) {\tt{Normaliz}} and (ii) {\tt{Polymake}}. 
These packages are available at \cite{normaliz} and \cite{polymake} 
respectively.}. In Table $\ref{list-stability}$ and Table $\ref{table:toricFano3}$ we give the list of  all results proved in Theorem $\ref{thm:main}$.

\begin{proof}
(a)
Let $e_i \;(i=1,2,3)$ be the standard basis of $\mathfrak N\cong \mathbb{Z}^3$.
Let $\Sigma$ be the complete fan in $\mathfrak N_{\R}\cong \R^3$ whose $1$-dimensional cones are given by
$\Sigma(1)=\set{\sigma_1,\sigma_2,\sigma_3,\sigma_4,\sigma_5}$ where
\begin{eqnarray*}
\sigma_1={\mathrm{Cone}}(e_1), \hspace{0.54cm}  \sigma_2={\mathrm{Cone}}(e_2), \hspace{0.54cm} \sigma_3={\mathrm{Cone}}(e_3), \\
\sigma_4={\mathrm{Cone}}(-e_3), \quad \text{and} \quad  \sigma_5={\mathrm{Cone}}(-e_1-e_2-e_3).
\end{eqnarray*}
Then the associated toric manifold $X$ is $\C P(\mathcal{O}_{\C P^2}\oplus \mathcal{O}_{\C P^2}(1))$
and 
\begin{align*}
\D=\left \{(x_1,x_2,x_3)\in \R^3 \;| \; x_1\geqslant -1, \; x_2\geqslant -1, \; x_3\geqslant -1, \; x_3 \leqslant 1, x_1+x_2+x_3\leqslant 1 \; \right \}. 
\end{align*}
Hence ${\mathrm{Vol}}(\Delta)=\frac{28}{3}$. Let $V\in \eta_c(X)$ be the extremal vector field in the anti-canonical class and $\theta_\D$
be the potential function of $V$. Then 
$\theta_\D=-\frac{70}{97}x_3+c$ for some constant $c$. (See \cite{Naka98}, Section $6$, Table $2$).
Since we have $\displaystyle \int_{\D}x_3=-2$, we conclude that
$c=-\frac{15}{97}$
by $\eqref{eq:const}$. Thus $\theta_\D=-\frac{70}{97}x_3-\frac{15}{97}$ and
\[
\D^-=\set{(x_1,x_2,x_3)\in \D | -\frac{112}{97}-\frac{70}{97}x_3\geqslant 0 }=\emptyset.
\]
$X$ satisfies the condition $\eqref{ineq:normal_pot}$.
The assertion is verified. 

\noindent (b) Let $\Sigma$ be the complete fan in $N_{\R}\cong \R^3$ 
whose $1$-dimensional cones are given by
$\Sigma(1)=\set{\sigma_1, \dots, \sigma_5}$ where
\begin{eqnarray*}
\sigma_1={\mathrm{Cone}}(e_1), \hspace{0.63cm}  \sigma_2={\mathrm{Cone}}(e_2), \hspace{0.63cm} 
\sigma_3={\mathrm{Cone}}(e_3), \\
\sigma_4={\mathrm{Cone}}(-e_3), \quad \text{and} \quad  \sigma_5={\mathrm{Cone}}(-e_1-e_2-2e_3).
\end{eqnarray*}
Then we readily see that $X=\C P(\mathcal{O}_{\C P^2}\oplus \mathcal{O}_{\C P^2}(2))$ and 
\begin{eqnarray*}
\D={\mathrm{Conv}}\set{ e_3^*-e_2^*,\; 4e_1^*-e_2^*-e_3^*,\; -e_1^*-e_2^*-e_3^*,\; e_3^*-e_1^*-e_2^*,\; 4e_2^*-e_1^*-e_3^*,\; e_3^*-e_1^*},
\end{eqnarray*}
where $e_i^*$ is the dual basis of $e_i$. Then ${\mathrm{Vol}}(\Delta)=\frac{31}{3}$ and
$\theta_\D=-\frac{620}{349}x_3+c$
for some constant $c$. Since $\displaystyle \int_{\D}x_3=-4$, we obtain $c=-\frac{240}{349}$.
Hence we conclude $\theta_\D=-\frac{620}{349}x_3-\frac{240}{349}$ and
\begin{align*}
\D^-&=\set{(x_1,x_2,x_3)\in \D | -\frac{589}{349}-\frac{620}{349}x_3\geqslant 0 } \\
&={\mathrm{Conv}}\left\{
\begin{matrix}
4e_1^*-e_2^*-e_3^*,& \frac{39}{10}e_1^*-e_2^*-\frac{19}{20}e_3^*,& -e_1^*-e_2^*-\frac{19}{20}e_3^* \\
&& \\
-e_1^*+\frac{39}{10}e_2^*-\frac{19}{20}e_3^*,& -e_1^*-e_2^*-e_3^*,& -e_1^*+4e_2^*-e_3^*
\end{matrix}
\right \}.
\end{align*}
Thus 
\begin{equation}\label{eq:vol}
{\mathrm{Vol}}(\Delta^{-})=\frac{7351}{12000}.
\end{equation}

Now we shall verify Condition $\eqref{unstablecond}$. First we note that
$\int_{\D^-}x_3\, dx = -\frac{96197}{4}$ and $\int_{\D^-}x_3^2\, dx = \frac{1828273}{5}$.
Hence one can see that
\[
\int_{\D^-}(1-\theta_\D)^2\, dx=\int_{\D^-}\left(\frac{589}{349}+\frac{620}{349}x_3 \right)^2\, dx=\frac{1475918766336271}{1461612000}.
\]
Plugging this and $\eqref{eq:vol}$ into $\eqref{unstablecond}$, we obtain the desired result because $1-c=\frac{589}{349}$.
\end{proof}



\subsection{Relative Chow stability}\label{counter-chow}

We study relatively Chow unstable examples of toric Fano manifolds.
First, we recall the example found by Nill and Paffenholz \cite{NP11} which is isomorphic to 
$\P(\mathcal{O}_W\oplus \mathcal{O}_W(-1,-1,-1,2))=:X_{\mathrm{NP}}$ where $W=(\C P^1)^3\times \C P^3$.
$X_{\mathrm{NP}}$ is a non-symmetric K\"ahler-Einstein toric Fano $7$-fold. In \cite{OSY12}, 
Ono, Sano and Yotsutani 
showed that $(X_{\mathrm{NP}}, -K_{X_{\mathrm{NP}}})$ is asymptotically Chow unstable. Later Yotsutani observed that
$X_{\mathrm{NP}}$ is Chow unstable w.r.t. $-K_{X_{\mathrm{NP}}}$ (i.e. $i=1$) 
using $\eqref{eq:necChowss}$. 
 Meanwhile, Nill and Paffenholz \cite{NP11} also proved that 
all toric K\"ahler-Einstein Fano manifolds are symmetric if $\dim X\leqslant 6$.  
It is known that if $X$ is a symmetric toric Fano manifold, then $(X, -K_X)$ is asymptotically Chow stable.
Hence,
the lowest dimension for an anti-canonically polarized K\"ahler-Einstein toric Fano manifold $(X, -K_X)$ to be
(asymptotically) Chow unstable is $7$.
However, in the case where $X$ is a Fano orbifold, 
such an example appears in $\dim X=3$.
\begin{proposition}\label{thm:Chowunst}
There is a K\"ahler-Einstein toric Fano orbifold $X$ with $\dim X=3$ which is Chow unstable w.r.t. $-2K_X$.
\end{proposition}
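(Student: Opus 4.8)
The plan is to exhibit a single explicit toric Fano orbifold $X$ of dimension $3$ together with a concrete lattice polytope $\Delta$ (so that $(X,-K_X)$ corresponds to $\Delta$), and then to verify Chow instability at the level $(-K_X)^2$, i.e.\ for $i=2$, by checking that the necessary condition \eqref{eq:necChowss} for Chow semistability \emph{fails}. Since the Futaki invariant of a K\"ahler--Einstein manifold vanishes, we have $V=0$, so the relative theory collapses to the ordinary one, and by Remark \ref{rem:necAsympChow}(2) the relevant obstruction is exactly
\[
\mathrm{Vol}(\Delta)\,s_\Delta(i)-E_\Delta(i)\int_\Delta \mathbf{x}\,dx=\mathbf 0,
\]
where $s_\Delta(i)=\sum_{\mathbf a\in\Delta\cap(\Z/i)^n}i\mathbf a$. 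The strategy is to choose $\Delta$ \emph{not} symmetric enough to force this barycentric balancing to hold, so that the vector identity breaks for $i=2$.

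First I would produce the candidate orbifold. A natural source is a weighted projective space or a quotient $\P(\mathcal O\oplus\mathcal O(a))$-type construction over a product of projective lines/spaces, chosen so that it admits a K\"ahler--Einstein metric (hence satisfies the toric KE criterion that the barycenter of the dual/moment polytope is at the origin) yet is manifestly non-symmetric as a lattice polytope. The K\"ahler--Einstein condition is what makes this clean: it guarantees $\theta_\Delta\equiv 0$ and $\int_\Delta\mathbf x\,dx=\mathbf 0$ (the barycenter of $\Delta$ is the origin), so for the \emph{integral} the right-hand side of \eqref{eq:necChowss} is controlled, and the entire instability is driven by the discrepancy between the continuous barycenter and the \emph{lattice-point} barycenter $\frac{1}{E_\Delta(2)}\sum_{\mathbf a\in\Delta\cap(\Z/2)^2}2\mathbf a$. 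This lattice-versus-volume barycenter mismatch is precisely the phenomenon exploited in the $7$-fold example of \cite{OSY12} and \cite{NP11}, and the point of the proposition is that in the \emph{orbifold} category it already occurs in dimension $3$.

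The concrete steps, in order, are: (i) write down the fan / dual Fano polytope $P$ and its polar $\Delta$ explicitly; (ii) confirm $X$ is K\"ahler--Einstein by checking that the barycenter of $\Delta$ (equivalently of $P^\circ$) vanishes, using the toric KE criterion (Wang--Zhu type condition) that reduces to $\int_\Delta\mathbf x\,dx=\mathbf 0$; (iii) enumerate the lattice points $\Delta\cap(\Z/2)^3$, compute $E_\Delta(2)=\mathrm{Card}(2\Delta\cap\Z^3)$ and the lattice sum $s_\Delta(2)=\sum 2\mathbf a$; (iv) substitute into \eqref{eq:necChowss} and exhibit at least one coordinate in which $\mathrm{Vol}(\Delta)\,s_\Delta(2)-E_\Delta(2)\int_\Delta\mathbf x\,dx\neq 0$. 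By the final sentence of Remark \ref{rem:necAsympChow}, once \eqref{eq:necChowss} fails for $i_0=2$ one concludes $(X_\Delta,L_\Delta^i)$ is Chow unstable for all $i$ beyond some $i_1$; but here we want the sharper statement at $i=2$ itself, which is exactly the failure of the necessary condition \eqref{eq:necChowss} at $i=2$, so no asymptotics are even needed---failure of the necessary condition at a given level \emph{is} instability at that level.

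The main obstacle will be step (ii): producing an orbifold that is simultaneously genuinely non-symmetric (so the lattice barycenter is off-origin at level $2$) \emph{and} certifiably K\"ahler--Einstein. Symmetry and the KE condition pull in opposite directions---high symmetry is the easiest way to guarantee a balanced barycenter, yet symmetric toric Fanos are automatically asymptotically Chow stable. So the delicate part is finding a polytope whose \emph{continuous} barycenter sits at the origin (KE holds) while its \emph{lattice-point} barycenter at scale $i=2$ does not, and verifying the KE property rigorously rather than by symmetry. I expect to settle this by a direct computation on a weighted-blowup or $\P(\mathcal O\oplus\mathcal O(\cdot))$ model with unequal weights, where the origin-barycenter condition can be imposed as an explicit algebraic constraint on the weights and then the level-$2$ lattice sum checked by hand or via \texttt{Normaliz}; the remaining arithmetic in steps (iii)--(iv) is routine.
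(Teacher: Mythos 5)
Your skeleton is the same one the paper uses: by Shi--Zhu \cite{SZ12}, a toric Fano orbifold is K\"ahler--Einstein if and only if its Futaki invariant vanishes (equivalently, the barycenter $\int_\D\mathbf{x}\,dx$ is the origin), and Chow instability of $(X,(-K_X)^2)$ follows from the failure of Ono's necessary condition \eqref{eq:necChowss} at that single level, with no asymptotics required. The genuine gap is that Proposition \ref{thm:Chowunst} is an existence statement and you never exhibit the witness: your steps (i)--(iv) are all conditional on having the polytope in hand, and the step you yourself flag as the main obstacle --- producing a non-symmetric yet certifiably K\"ahler--Einstein orbifold --- is precisely the one you defer to a hoped-for ``weighted-blowup or $\P(\mathcal{O}\oplus\mathcal{O}(\cdot))$ model with unequal weights.'' Producing that example is not a routine final computation; it is the entire content of the proof.

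Moreover, the search method you propose would very likely come up empty. The toric data is discrete, so the ``origin-barycenter condition imposed as an algebraic constraint on the weights'' is a Diophantine condition, not one you can tune continuously, and your families essentially contain no solutions: a weighted projective $3$-space has nonvanishing Futaki invariant unless it is $\C P^3$; the bundles $\C P(\mathcal{O}_{\C P^2}\oplus\mathcal{O}_{\C P^2}(a))$ (i.e.\ $\mathcal B_2$, $\mathcal B_1$) have $\theta_\D\not\equiv 0$ (Table \ref{table:toricFano3}); and the one bundle-type K\"ahler--Einstein toric threefold, $\mathcal C_5=\C P_{\C P^1\times \C P^1}(\mathcal{O}\oplus\mathcal{O}(1,-1))$, is symmetric and hence asymptotically Chow \emph{stable} --- exactly the tension you noted working against you. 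The paper instead finds the example by exhaustive search: among Kasprzyk's $12{,}190$ $\Q$-factorial toric Fano threefolds \cite{Kasp10}, exactly $42$ have vanishing Futaki invariant, and exactly one of those is non-symmetric, detected by $\sum_{v\in\mathcal{V}(\D^{\circ})}v\neq\mathbf{0}$; it is the canonical Fano polytope with ID $530571$ in \cite{graded_ring}, whose dual $\D$ has Gorenstein index $2$ (which is also why the statement concerns $(-K_X)^2$ rather than $-K_X$: the anticanonical divisor itself is not Cartier). For $\widetilde{\D}=2\D$ one then computes $\int_{\widetilde{\D}}\mathbf{x}\,dx=(0,0,0)$, so $X$ is K\"ahler--Einstein, but $\sum_{\mathbf{a}\in\widetilde{\D}\cap\Z^3}\mathbf{a}=(0,1,-1)\neq\mathbf{0}$, so \eqref{eq:necChowss} fails. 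That unique example has six rays and no apparent projective-bundle structure, and for smooth toric Fanos \cite{NP11} rules out any such example below dimension $7$, so a search confined to your proposed constructions cannot be expected to locate it.
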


Our strategy is the following. For any toric Fano orbifold $X$ with the associated simplicial Fano polytope 
$\D^{\circ}\subseteq \mathfrak N_{\R}$, $X$ admits  K\"ahler-Einstein metric if and only if the Futaki invariant of $X$ vanishes
by Shi-Zhu's result \cite{SZ12}. Let $\mathcal{W}(X)$ be the Weyl group of ${\mathrm{Aut}}(X)$ with respect to the
maximal torus and $\mathfrak N_{\R}^{\mathcal{W}(X)}$ be the $\mathcal{W}(X)$-invariant subspace of $\mathfrak N_{\R}$.
It is known that $\sum_{v\in \mathcal{V}(\D^{\circ})}v\in \mathfrak N_{\R}^{\mathcal{W}(X)}$ (see \cite[Chapter $5$]{NiThesis}).
Here $\mathcal{V}(\D^{\circ})$ denotes the set of vertices of $\D^{\circ}$. Note that 
$X$ is symmetric if and only if $\mathfrak N_{\R}^{\mathcal{W}(X)}=\set{\mathbf{0}}$.
Hence we may consider toric Fano orbifolds with the vanishing Futaki invariant satisfying 
$\sum_{v\in \mathcal{V}(\D^{\circ})}v\neq \mathbf{0}$.
Among all $12,190$ $3$-dimensional toric Fano orbifolds (i.e. $\Q$-factorial toric Fano varieties),
there are $42$ toric Fano orbifolds with the vanishing Futaki invariant. Of these $42$ toric Fano orbifolds,
there is the only one example satisfying the above conditions.
Then it suffices to check 
$\eqref{eq:necChowss}$ for the dual moment polytope
$\D \subseteq \mathfrak M_{\R}$ of this one.
Note that the Gorenstein index $j_X$ is given by minimal $k$ such that $k\D$ is a lattice polytope for a fixed canonical
Fano polytope $\D^{\circ}\subseteq \mathfrak N_{\R}$ \cite[Proposition $2.3.2$]{NiThesis}. 
\begin{proof}[Proof of Proposition $\ref{thm:Chowunst}$]
Again we use the same notations as in the proof of Proposition $\ref{prop:toricFano}$.
We consider the $3$-dimensional canonical Fano polytope\footnote{ID number in the database \cite{graded_ring}
is $530571$.}
\[
\D^{\circ}:={\mathrm{Conv}}\set{e_1-e_2-2e_3, e_2+3e_3, e_1+e_2+3e_3, e_1+2e_2+4e_3, e_2, 
-2e_1-2e_2-3e_3}.
\]
The vertices of the dual polytope $\D$ are
\[
 -\frac{1}{2}e_1^*+\frac{5}{2}e_2^*-e_3^*, e_1^*-e_2^*, 2e_2^*-e_3^*, \frac{1}{2}e_2^*-\frac{1}{2}e_3^*,
-e_1^*, -e_1^*-e_2^*+\frac{1}{2}e_3^*, \frac{3}{2}e_1^*-e_2^*, -e_2^*+e_3^* .
\]
Thus $\D$ is 
a simple polytope and $j_{X_{\D}}=2$.
Setting $\widetilde{\D}:=2\D$, we compute the Chow weight of $\widetilde{\D}$.
We readily see that
\begin{equation*}
E_{\widetilde{\D}}(i)=12i^3+9i^2+3i+1, \quad {\mathrm{Vol}}(\widetilde{\D})=12, \quad 
\int_{\widetilde{\D}}{\mathbf{x}}\, dx =(0,0,0),\quad \int_{\p \widetilde{\D}}{\mathbf{x}} 
\,d\sigma=(0,0,0)
\end{equation*}
and
\begin{equation*}
\sum_{{\mathbf{a}}\in \widetilde{\D}\cap \Z^3} {\mathbf{a}}=(0,1,-1), \quad 
\sum_{{\mathbf{a}}\in 2\widetilde{\D}\cap \Z^3} {\mathbf{a}}=(0,3,-3), \quad 
\sum_{{\mathbf{a}}\in 3\widetilde{\D}\cap \Z^3} {\mathbf{a}}=(0,6,-6)
\end{equation*}
hold. 
This implies that
\[
\int_{\p \widetilde{\D}}{\mathbf{x}}\, d\sigma -
\frac{{\mathrm{Vol}}(\p \widetilde{\D})}{{\mathrm{Vol}}( \widetilde{\D})}\int_{\widetilde{\D}}{\mathbf{x}}\, dx=(0,0,0)
\]
i.e. the Futaki invariant vanishes.
However obviously,
\[
\sum_{\mathbf{a}\in \widetilde{\D}\cap \Z^3}{\mathbf{a}}\neq \frac{E_{\widetilde{\D}}(1)}{{\mathrm{Vol}}(\widetilde{\D})}
\int_{\widetilde{\D}} \mathbf{x}\, dx.
\]
By \eqref{eq:necChowss}, the $2$-Gorenstein toric Fano variety $(X_{\D}, -2K_{X_{\D}})$ is Chow unstable.
\end{proof}
In \cite{Ber12}, Berman proved that a $\Q$-Fano variety $X$ admitting K\"ahler-Einstein metrics is 
$K$-stable. Hence Remark $\ref{rem:necAsympChow}$ (2) gives the following.
\begin{corollary}
The example in 
Proposition $\ref{thm:Chowunst}$ is $K$-stable but asymptotically Chow unstable.
\end{corollary}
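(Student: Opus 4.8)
The plan is to prove the two assertions independently, each by invoking a result already established. For the $K$-stability I would appeal directly to Berman's theorem \cite{Ber12}. The orbifold $X$ produced in Proposition \ref{thm:Chowunst} is $\Q$-Fano, and it admits a Kähler-Einstein metric: its Futaki invariant vanishes (this is exactly what was checked in the proof of Proposition \ref{thm:Chowunst}, via the boundary-integral condition), and by Shi-Zhu \cite{SZ12} a toric Fano orbifold with vanishing Futaki invariant carries such a metric. Berman's result then says that any $\Q$-Fano variety admitting a Kähler-Einstein metric is $K$-(poly)stable, and under the convention fixed in the introduction this is precisely what we call $K$-stable. Note that since the torus acts on $X$ with $F\equiv 0$ on its Lie algebra, there are nontrivial product configurations of vanishing Donaldson-Futaki invariant, so polystability — rather than strict stability — is the right notion here.

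For the asymptotic Chow instability I would reduce everything to the necessary condition \eqref{eq:necChowss}. Write $L=(-K_X)^2$; since the Gorenstein index is $j_X=2$, the polarized orbifold $(X,L)$ corresponds to the simple lattice polytope $\widetilde{\D}=2\D$, the smallest lattice dilate of $\D$. Because $X$ is Kähler-Einstein the extremal field vanishes, $V=0$, hence $\theta_\D\equiv 0$ and $\tilde\theta_{i\widetilde{\D}}=0$, so the relative notions degenerate to the absolute ones and \eqref{eq:secpoly2-2} specializes to \eqref{eq:necChowss} exactly as recorded in Remark \ref{rem:necAsympChow}(2). In the proof of Proposition \ref{thm:Chowunst} it was already verified that \eqref{eq:necChowss} fails at $i=1$: there $\sum_{\mathbf{a}\in \widetilde{\D}\cap \Z^3}\mathbf{a}=(0,1,-1)$, whereas $\frac{E_{\widetilde{\D}}(1)}{\mathrm{Vol}(\widetilde{\D})}\int_{\widetilde{\D}}\mathbf{x}\,dx=\mathbf 0$.

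The last step is the passage from failure at a single level to asymptotic failure, which is supplied by the polynomiality argument of Remark \ref{rem:necAsympChow}(2): each component of $\mathrm{Vol}(\widetilde{\D})\sum_{\mathbf{a}\in\widetilde{\D}\cap(\Z/i)^3}\mathbf{a}-E_{\widetilde{\D}}(i)\int_{\widetilde{\D}}\mathbf{x}\,dx$ is a polynomial in $i$, and being nonzero at $i=1$ it is not identically zero, hence nonzero for all large $i$. Since $\widetilde{\D}$ is a simple lattice polytope, Remark \ref{rem:necAsympChow}(2) then forces $(X,L^i)$ to be Chow unstable for every sufficiently large $i$, that is, $(X,L)$ is asymptotically Chow unstable. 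I do not expect a genuine obstacle: both ingredients are in hand, and the only point that warrants care is the bookkeeping that, with $V=0$, instability in the toric sense is instability in the full sense — the condition \eqref{eq:necChowss} is a necessary condition for honest $\mathrm{SL}$-Chow semistability (it already obstructs semistability along the subtorus $T_{\widetilde{\D}}^{\C}$), so the conclusion is unconditional Chow instability rather than merely toric instability, and Berman's theorem applies verbatim to the singular $\Q$-Fano orbifold at hand.
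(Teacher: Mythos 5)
Your proposal is correct and takes essentially the same route as the paper: $K$-stability via Berman's theorem \cite{Ber12} (with K\"ahler--Einstein existence supplied by Shi--Zhu \cite{SZ12} and the vanishing of the Futaki invariant), and asymptotic Chow instability from the failure of \eqref{eq:necChowss} at $i=1$, already verified in the proof of Proposition \ref{thm:Chowunst}, combined with the polynomiality argument of Remark \ref{rem:necAsympChow}(2). The additional bookkeeping you supply (the polystability convention, $V=0$ collapsing the relative notions to the absolute ones, and the simplicity of $\widetilde{\D}$) only makes explicit what the paper leaves implicit.
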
 

Next, we consider the general case with the nontrivial Futaki invariant. 
We see that a smooth counter-example appears in $\dim X=3$. 
\begin{proposition}\label{prop:relChowunst3}
Let $X$ be a toric Fano threefold which is isomorphic to $\mathcal{E}_4$.
Then $(X,-K_X)$ is relatively $K$-stable but it is asymptotically relatively Chow unstable.
\end{proposition}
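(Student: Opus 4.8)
This proposition comprises two independent claims, which I would establish separately.

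The relative $K$-stability is handled by the computational scheme of Proposition~\ref{prop:toricFano}(a). First I would determine the normalized potential $\theta_\D$ of the extremal vector field on the moment polytope of $\mathcal{E}_4$, using Nakagawa's tables together with the normalization \eqref{eq:const}, and then verify that $\sup_\D \theta_\D \leqslant 1$, i.e. that $\D^- = \emptyset$. By part~(1) of Theorem~\ref{prop:unstablecond} this gives relative strong $K$-stability in the toric sense, hence the relative $K$-stability asserted; this is precisely the entry for $\mathcal{E}_4$ already recorded in Theorem~\ref{thm:main}.

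For the asymptotic relative Chow instability, the plan is to argue against Corollary~\ref{cor:necAsymprelChowss}: were $(X,-K_X)$ asymptotically relatively Chow semistable, then for \emph{every} $i\in\Z_+$ the relation \eqref{eq:secpoly2-2} would admit a solution $s_i\in\R$. So it suffices to produce a single level $i$ at which no such $s_i$ exists. Reading \eqref{eq:secpoly2-2} as the vector equation $A + s_i B = C$ in $\R^3$, with $A = \sum_{\mathbf a} i\mathbf a$, $B = \sum_{\mathbf a}\tilde\theta_{i\D}(\mathbf a)\mathbf a$ and $C = \frac{iE_\D(i)}{\mathrm{Vol}(\D)}\int_\D \mathbf x\,dx$, a solution exists exactly when $C-A$ is a scalar multiple of $B$. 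As stressed in Remark~\ref{rem:necAsympChow}(1), this is an over-determined system (three scalar equations, one unknown), so for a polytope with no symmetry in the coordinates $x_1,x_2,x_3$ one expects the three components to be mutually inconsistent.

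Concretely, I would fix the moment polytope $\D$ of $\mathcal{E}_4$ and a small level $i$ (starting with $i=1$, where the relevant lattice points are just $\D\cap\Z^3$), enumerate those points, and evaluate $A$, $B$ and $C$ from the $\theta_\D$ found above. The instability then reduces to the linear-algebra check that the $2\times 3$ matrix with rows $B$ and $C-A$ has rank $2$; once this holds at that $i$, Corollary~\ref{cor:necAsymprelChowss}, which propagates failure at one level to all sufficiently large levels, forces asymptotic relative Chow instability. The main obstacle is purely computational: one must enumerate the lattice points of $\D$ (and, should $i=1$ turn out inconclusive, of $i\D$ for the next few $i$) and carry out the exact rational arithmetic for the three vectors, a task I would delegate to \texttt{Normaliz}/\texttt{Polymake}. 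The conceptual heart is merely that the broken coordinate symmetry of $\mathcal{E}_4$ renders $C-A$ and $B$ non-parallel.
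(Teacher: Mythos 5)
Your proposal follows the paper's own argument almost verbatim: the $K$-stability part is exactly the Table/Theorem~\ref{prop:unstablecond}(1) computation (the paper records $\theta_\D=-\frac{34208}{78995}x_1+\frac{7936}{78995}x_2-\frac{24929}{394975}$ and $\D^-=\emptyset$ for $\mathcal{E}_4$), and the instability part is precisely the paper's check that \eqref{eq:secpoly2-2} fails at $i=1$ (the paper computes $\sum_{\mathbf{a}\in\D\cap\Z^3}\mathbf{a}=(-4,2,1)$, the weighted sum, and $\int_\D\mathbf{x}\,dx$, and sees the resulting one-parameter system is inconsistent), followed by the appeal to Corollary~\ref{cor:necAsymprelChowss}. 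Your reformulation of the inconsistency as a rank-$2$ condition on the matrix with rows $B$ and $C-A$ is just a cosmetic repackaging of the same step, so this is the same proof.
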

\begin{proof}
It suffices to see that $\eqref{eq:secpoly2-2}$ is not satisfied.
The corresponding $3$-dimensional moment polytope is listed in Table $\ref{table:MomentPoly}$. 
Thus we have
\[
E_{\D}(i)=\frac{20}{3}i^3+10i^2+\frac{16}{3}i+1 \quad \text{and} \quad
\int_{\D}{\mathbf{x}}\, dx=\left(-\frac{7}{8}, \frac{5}{12}, \frac{5}{24}\right).
\]
In particular,
\[
\sum_{\mathbf{a}\in\D\cap \Z^3}{\mathbf{a}}=(-4,2,1) \quad \text{and} \quad 
s_1\sum_{\mathbf{a}\in\D\cap \Z^3} (\theta_\D(\mathbf{a})-\bar{\theta}_\D)\mathbf{a}=s_1
\left(-\frac{11134272}{1816885}, \frac{1079424}{363377}, \frac{539712}{363377} \right)
\]
holds. Thus there is no $s_1$ satisfying $\eqref{eq:secpoly2-2}$.
The assertion follows from Corollary $\ref{cor:necAsymprelChowss}$.
\end{proof}



\begin{table}[H]
\caption{$\theta_\D$ and $\D^-$ of toric Fano threefolds}\label{table:toricFano3}
\begin{center}
\begin{tabular}{ccc}
\toprule
 Notation  & $\theta_\D=\sum_{i=1}^3a_ix_i+c$ & $\D^-$ \\ \midrule
 $\C P^3$  & $\equiv 0$ & $\emptyset$  \\[6pt]
 $\mathcal{B}_1$  & $ -\frac{620}{349}x_3-\frac{240}{349}$ & 
$\small{
\mathrm{Conv}\left\{
\begin{pmatrix}
4 \\
-1 \\
-1 \\
\end{pmatrix}, 
\begin{pmatrix}
\frac{39}{10} \\
-1 \\
-\frac{19}{20} \\
\end{pmatrix}, 
\begin{pmatrix}
-1 \\
-1 \\
-\frac{19}{20} \\
\end{pmatrix}, 
\begin{pmatrix}
 -1 \\
\frac{39}{10} \\
-\frac{19}{20} \\
\end{pmatrix},
 \right.}$
 \\ 
&& \ \ \ \ \ \ \ \ \ \ \ \
$\small{\left. \hspace{2cm} 
\begin{pmatrix}
-1 \\
-1 \\
-1 \\
\end{pmatrix},
\begin{pmatrix}
-1 \\
4 \\
-1 \\
\end{pmatrix}
\right\}}$ \\ [10pt]
$\mathcal{B}_2$  & $-\frac{70}{97}x_3-\frac{15}{97}$ & $\emptyset$ \\ [3pt]
$\mathcal{B}_3$  & $-\frac{20}{43}x_1-\frac{20}{43}x_2-\frac{5}{43}$ & $\emptyset$ \\ [3pt]
$\mathcal{B}_4$  & $\equiv 0$ & $\emptyset$  \\ [3pt]
 $\mathcal{C}_1$ & $-\frac{260}{219}x_3-\frac{80}{219}$ & $\emptyset$ \\ 
[6pt]
$\mathcal{C}_2$  & $-\frac{7600}{17787}x_1-\frac{17750}{17787}x_3-\frac{4868}{17787}$ &
$\small{
\mathrm{Conv}\left\{
\begin{pmatrix}
-\frac{981}{1520} \\
-1 \\
-1 \\
\end{pmatrix}, 
\begin{pmatrix}
-\frac{981}{1520} \\
\frac{4021}{1520} \\
-1 \\
\end{pmatrix}, 
\begin{pmatrix}
-1 \\
\frac{10111}{3550} \\
-\frac{3011}{3550} \\
\end{pmatrix}, 
 \right.}$
 \\ 
&&
$\small{\left. \hspace{2cm}
\begin{pmatrix}
 -1 \\
3 \\
-1 \\
\end{pmatrix}, 
\begin{pmatrix}
-1 \\
-1 \\
-\frac{3011}{3550} \\
\end{pmatrix},
\begin{pmatrix}
-1 \\
-1 \\
-1 \\
\end{pmatrix}
\right\}}$ \\ 
[6pt]
 $\mathcal{C}_3$  & $\equiv 0$ & $\emptyset$  \\[3pt]
$\mathcal{C}_4$  & $-\frac{6}{11}x_2-\frac{1}{11}$ & $\emptyset$ \\[3pt]
 $\mathcal{C}_5$  & $\equiv 0$ & $\emptyset$  \\
[6pt]
$\mathcal{D}_1$  & $ \frac{99600}{467581}x_1-\frac{627000}{467581}x_2-\frac{213939}{467581}$ & 
$\small{
\mathrm{Conv}\left\{
\begin{pmatrix}
\frac{48388}{18165} \vspace{0.1cm}\\
-\frac{12058}{18165} \\
-1 \\
\end{pmatrix}, 
\begin{pmatrix}
\frac{1363}{2490} \\
-1 \\
\frac{3617}{2490} \\
\end{pmatrix}, 
\begin{pmatrix}
3 \\
-1 \\
-1 \\
\end{pmatrix}, 
\begin{pmatrix}
\frac{1363}{2490} \\
-1 \\
-1 \\
\end{pmatrix}
\right\}}$
 \\ 
&& \\
$\mathcal{D}_2$  & $ \frac{219420}{650251}x_1-\frac{318320}{650251}x_2-\frac{62565}{650251}$ & 
$\small{
\mathrm{Conv}\left\{
\begin{pmatrix}
2\\
-\frac{1489}{1730} \\
-1 \\
\end{pmatrix}, 
\begin{pmatrix}
\frac{4288}{2385} \\
-1 \\
-\frac{1903}{2385} \\
\end{pmatrix}, 
\begin{pmatrix}
2 \\
-1 \\
-1 \\
\end{pmatrix}, 
\begin{pmatrix}
\frac{4288}{2385} \\
-1 \\
-1 \\
\end{pmatrix}
\right\}}$ \\ 
&& \\
$\mathcal{E}_1$  & $ -\frac{17020}{19651}x_1-\frac{17020}{19651}x_2-\frac{6845}{19651}$ & 
$\small{
\mathrm{Conv}\left\{
\begin{pmatrix}
-\frac{103}{185}\\
-1 \\
-1 \\
\end{pmatrix}, 
\begin{pmatrix}
-\frac{103}{185} \\
-1 \\
-\frac{473}{185} \\
\end{pmatrix}, 
\begin{pmatrix}
-1 \\
-\frac{103}{185} \\
-1 \\
\end{pmatrix}, 
 \right.}$
 \\ 
&&
$\small{\left. \hspace{2cm}
\begin{pmatrix}
 -1 \\
-\frac{103}{185} \\
\frac{473}{185} \\
\end{pmatrix}, 
\begin{pmatrix}
-1 \\
-1 \\
3 \\
\end{pmatrix},
\begin{pmatrix}
-1 \\
-1 \\
-1 \\
\end{pmatrix}
\right\}}$ \\ [5pt]
\bottomrule
\end{tabular}
\end{center}
\end{table}

\begin{table}[H]
\begin{center}
\begin{tabular}{ccc}
\toprule
 Notation  & $\theta_\D$ & $\D^-$  \\ \midrule
&& \\
$\mathcal{E}_2$  & $ -\frac{2646160}{2735927}x_1-\frac{982960}{2735927}x_2-\frac{692905}{2735927}$ & 
$\small{
\mathrm{Conv}\left\{
\begin{pmatrix}
-\frac{13897}{15035}\\
-1 \\
-1 \\
\end{pmatrix}, 
\begin{pmatrix}
-\frac{13897}{15035} \\
-1 \\
-\frac{28932}{15035} \\
\end{pmatrix}, 
\begin{pmatrix}
-1 \\
-\frac{4447}{5585} \\
-1 \\
\end{pmatrix}, 
\right.}$
 \\ 
&&
$\small{\left. \hspace{2cm}
\begin{pmatrix}
 -1 \\
-\frac{4447}{5585} \\
2 \\
\end{pmatrix}, 
\begin{pmatrix}
-1 \\
-1 \\
2 \\
\end{pmatrix},
\begin{pmatrix}
-1 \\
-1 \\
-1 \\
\end{pmatrix}
\right\}}$ \\ 
 $\mathcal{E}_3$  & $-\frac{168}{409}x_1-\frac{168}{409}x_2-\frac{32}{409}$ & $\emptyset$ \\ 
[5pt]
 $\mathcal{E}_4$  & $-\frac{34208}{78995}x_1+\frac{7936}{78995}x_2-\frac{24929}{394975}$ & $\emptyset$ \\ 
[5pt]
 $\mathcal{F}_1$  & $\equiv 0$ & $\emptyset$  \\
[5pt]
 $\mathcal{F}_2$  & $\frac{36}{67}x_2-\frac{5}{67}$ & $\emptyset$ \\ 
[4pt]
\bottomrule
\end{tabular}
\end{center}
\end{table}



\begin{table}[H]
\caption{The moment polytopes of toric Fano threefolds}\label{table:MomentPoly}
\begin{center}
\begin{tabular}{ccc}
\toprule
Notation  & The moment polytope $\D$  \\ \midrule
&&\\
 $\C P^3$  & $\small{ \mathrm{Conv}\left\{
\begin{pmatrix}
-1 \\
-1 \\
-1 \\
\end{pmatrix}, 
\begin{pmatrix}
-1 \\
-1 \\
3 \\
\end{pmatrix}, 
\begin{pmatrix}
-1 \\
3 \\
-1 \\
\end{pmatrix}, 
\begin{pmatrix}
3 \\
-1 \\
-1 \\
\end{pmatrix}\right\}}$ \\ 
&&\\
$\mathcal{B}_1$  & $\small{\mathrm{Conv}\left\{
\begin{pmatrix}
0 \\
-1 \\
1 \\
\end{pmatrix}, 
\begin{pmatrix}
4 \\
-1 \\
-1 \\
\end{pmatrix}, 
\begin{pmatrix}
-1 \\
-1 \\
-1 \\
\end{pmatrix}, 
\begin{pmatrix}
-1\\
-1 \\
1 \\
\end{pmatrix},
\begin{pmatrix}
-1\\
4 \\
-1 \\
\end{pmatrix},
\begin{pmatrix}
-1\\
0 \\
1 \\
\end{pmatrix}\right\}}$ \\
&&\\
 $\mathcal{B}_2$  & $\small{\mathrm{Conv}\left\{
\begin{pmatrix}
1 \\
-1 \\
1 \\
\end{pmatrix}, 
\begin{pmatrix}
3 \\
-1 \\
-1 \\
\end{pmatrix}, 
\begin{pmatrix}
-1 \\
-1 \\
-1 \\
\end{pmatrix}, 
\begin{pmatrix}
-1\\
-1 \\
1 \\
\end{pmatrix},
\begin{pmatrix}
-1\\
3 \\
-1 \\
\end{pmatrix},
\begin{pmatrix}
-1\\
1 \\
1 \\
\end{pmatrix}\right\}}$ \\
&&\\
 $\mathcal{B}_3$  & $\small{\mathrm{Conv}\left\{
\begin{pmatrix}
2 \\
-1 \\
0 \\
\end{pmatrix}, 
\begin{pmatrix}
2 \\
-1 \\
-1 \\
\end{pmatrix}, 
\begin{pmatrix}
-1 \\
2 \\
-1 \\
\end{pmatrix}, 
\begin{pmatrix}
-1\\
-1 \\
-1 \\
\end{pmatrix},
\begin{pmatrix}
-1\\
2 \\
0 \\
\end{pmatrix},
\begin{pmatrix}
-1\\
-1 \\
3 \\
\end{pmatrix}\right\}}$ \\
&&\\
 $\mathcal{B}_4$  & $\small{\mathrm{Conv}\left\{
\begin{pmatrix}
2 \\
-1 \\
1 \\
\end{pmatrix}, 
\begin{pmatrix}
2 \\
-1 \\
-1 \\
\end{pmatrix}, 
\begin{pmatrix}
-1 \\
-1 \\
-1 \\
\end{pmatrix}, 
\begin{pmatrix}
-1\\
-1 \\
1 \\
\end{pmatrix},
\begin{pmatrix}
-1\\
2 \\
-1 \\
\end{pmatrix},
\begin{pmatrix}
-1\\
2 \\
1 \\
\end{pmatrix} \right\}}$ \\
&&\\
 $\mathcal{C}_1$  & $\small{\mathrm{Conv}\left\{
\begin{pmatrix}
0 \\
-1 \\
1 \\
\end{pmatrix}, 
\begin{pmatrix}
2 \\
-1 \\
-1 \\
\end{pmatrix}, 
\begin{pmatrix}
2 \\
2 \\
-1 \\
\end{pmatrix}, 
\begin{pmatrix}
0\\
0 \\
1 \\
\end{pmatrix}, 
\begin{pmatrix}
-1\\
2 \\
-1 \\
\end{pmatrix},
\begin{pmatrix}
-1\\
0 \\
1 \\
\end{pmatrix},
\begin{pmatrix}
-1\\
-1 \\
-1 \\
\end{pmatrix},
\begin{pmatrix}
-1\\
-1 \\
1 \\
\end{pmatrix}\right\}}$ \\
&&\\
 $\mathcal{C}_2$  & $\small{\mathrm{Conv}\left\{
\begin{pmatrix}
0 \\
-1 \\
1 \\
\end{pmatrix}, 
\begin{pmatrix}
2 \\
-1 \\
-1 \\
\end{pmatrix}, 
\begin{pmatrix}
2 \\
0 \\
-1 \\
\end{pmatrix}, 
\begin{pmatrix}
0\\
0 \\
1 \\
\end{pmatrix}, 
\begin{pmatrix}
-1\\
3 \\
-1 \\
\end{pmatrix},
\begin{pmatrix}
-1\\
1 \\
1 \\
\end{pmatrix},
\begin{pmatrix}
-1\\
-1 \\
-1 \\
\end{pmatrix},
\begin{pmatrix}
-1\\
-1 \\
1 \\
\end{pmatrix}\right\}}$ \\
&&\\
$\mathcal{C}_3$  & $\small{\mathrm{Conv}\left\{
\begin{pmatrix}
1 \\
-1 \\
1 \\
\end{pmatrix}, 
\begin{pmatrix}
1 \\
-1 \\
-1 \\
\end{pmatrix}, 
\begin{pmatrix}
1 \\
1 \\
-1 \\
\end{pmatrix}, 
\begin{pmatrix}
1\\
1 \\
1 \\
\end{pmatrix}, 
\begin{pmatrix}
-1\\
1 \\
-1 \\
\end{pmatrix},
\begin{pmatrix}
-1\\
1 \\
1 \\
\end{pmatrix},
\begin{pmatrix}
-1\\
-1 \\
-1 \\
\end{pmatrix},
\begin{pmatrix}
-1\\
-1 \\
1 \\
\end{pmatrix}
\right\}}$ \\
&&\\
 $\mathcal{C}_4$  & $\small{\mathrm{Conv}\left\{
\begin{pmatrix}
2 \\
-1 \\
1 \\
\end{pmatrix}, 
\begin{pmatrix}
2 \\
-1 \\
-1 \\
\end{pmatrix}, 
\begin{pmatrix}
0 \\
1 \\
-1 \\
\end{pmatrix}, 
\begin{pmatrix}
0\\
1 \\
1 \\
\end{pmatrix}, 
\begin{pmatrix}
-1\\
1 \\
-1 \\
\end{pmatrix},
\begin{pmatrix}
-1\\
1 \\
1 \\
\end{pmatrix},
\begin{pmatrix}
-1\\
-1 \\
-1 \\
\end{pmatrix},
\begin{pmatrix}
-1\\
-1 \\
1 \\
\end{pmatrix}
\right\}}$ \\
&&\\
 $\mathcal{C}_5$  & $\small{
\mathrm{Conv}\left\{
\begin{pmatrix}
0 \\
-1 \\
1 \\
\end{pmatrix}, 
\begin{pmatrix}
2 \\
-1 \\
-1 \\
\end{pmatrix}, 
\begin{pmatrix}
2 \\
0 \\
-1 \\
\end{pmatrix}, 
\begin{pmatrix}
0\\
2 \\
1 \\
\end{pmatrix}, 
\begin{pmatrix}
-1\\
0 \\
-1 \\
\end{pmatrix},
\begin{pmatrix}
-1\\
2 \\
1 \\
\end{pmatrix},
\begin{pmatrix}
-1\\
-1 \\
-1 \\
\end{pmatrix},
\begin{pmatrix}
-1\\
-1 \\
1 \\
\end{pmatrix}\right\}}$\\
&&\\
 $\mathcal{D}_1$  & $\small{\mathrm{Conv}\left\{
\begin{pmatrix}
0 \\
1 \\
0 \\
\end{pmatrix}, 
\begin{pmatrix}
0 \\
1 \\
-1 \\
\end{pmatrix}, 
\begin{pmatrix}
3 \\
-1 \\
-1 \\
\end{pmatrix}, 
\begin{pmatrix}
1\\
1 \\
-1 \\
\end{pmatrix}, 
\begin{pmatrix}
-1\\
-1 \\
3 \\
\end{pmatrix},
\begin{pmatrix}
-1\\
-1 \\
-1 \\
\end{pmatrix},
\begin{pmatrix}
-1\\
0 \\
-1 \\
\end{pmatrix},
\begin{pmatrix}
-1\\
0 \\
2 \\
\end{pmatrix}
\right\}}$ \\
&&\\
$\mathcal{D}_2$  & $\small{\mathrm{Conv}\left\{
\begin{pmatrix}
0 \\
1 \\
1 \\
\end{pmatrix}, 
\begin{pmatrix}
0 \\
1 \\
-1 \\
\end{pmatrix}, 
\begin{pmatrix}
2 \\
-1 \\
-1 \\
\end{pmatrix}, 
\begin{pmatrix}
2\\
1 \\
-1 \\
\end{pmatrix},
\begin{pmatrix}
-1\\
-1 \\
2 \\
\end{pmatrix},
\begin{pmatrix}
-1\\
-1 \\
-1 \\
\end{pmatrix},
\begin{pmatrix}
-1\\
0 \\
-1 \\
\end{pmatrix},
\begin{pmatrix}
-1\\
0 \\
2 \\
\end{pmatrix}
\right\}}$\\
\bottomrule
\end{tabular}
\end{center}
\end{table}

\begin{table}[H]
\begin{center}
\begin{tabular}{ccc}
\toprule
Notation  & The moment polytope $\D$  \\ \midrule
\\
$\mathcal{E}_1$  & $\small{\mathrm{Conv}\left\{
\begin{pmatrix}
1 \\
-1 \\
-1 \\
\end{pmatrix}, 
\begin{pmatrix}
1 \\
-1 \\
1 \\
\end{pmatrix}, 
\begin{pmatrix}
0 \\
1 \\
0 \\
\end{pmatrix}, 
\begin{pmatrix}
1\\
0 \\
0 \\
\end{pmatrix},
\begin{pmatrix}
1\\
0 \\
-1 \\
\end{pmatrix}, \right.}$  \\
& &\\
&
 $\small{
 \left. \hspace{5.5cm}
\begin{pmatrix}
0\\
1 \\
-1 \\
\end{pmatrix},
\begin{pmatrix}
-1\\
1 \\
-1 \\
\end{pmatrix},
\begin{pmatrix}
-1\\
1 \\
1 \\
\end{pmatrix},
\begin{pmatrix}
-1\\
-1 \\
3 \\
\end{pmatrix},
\begin{pmatrix}
-1\\
-1 \\
-1 \\
\end{pmatrix}
\right\}}$ \\
& & \\
 $\mathcal{E}_2$  & $\small{\mathrm{Conv}\left\{
\begin{pmatrix}
1 \\
-1 \\
-1 \\
\end{pmatrix}, 
\begin{pmatrix}
1 \\
-1 \\
0 \\
\end{pmatrix}, 
\begin{pmatrix}
0 \\
1 \\
1 \\
\end{pmatrix}, 
\begin{pmatrix}
1\\
0 \\
0 \\
\end{pmatrix},
\begin{pmatrix}
1\\
0 \\
-1 \\
\end{pmatrix}, \right.}$  \\
& &\\
&
 $\small{
 \left. \hspace{5.5cm}
\begin{pmatrix}
0\\
1 \\
-1 \\
\end{pmatrix},
\begin{pmatrix}
-1\\
1 \\
-1 \\
\end{pmatrix},
\begin{pmatrix}
-1\\
1 \\
2 \\
\end{pmatrix},
\begin{pmatrix}
-1\\
-1 \\
2 \\
\end{pmatrix},
\begin{pmatrix}
-1\\
-1 \\
-1 \\
\end{pmatrix}
\right\}}$\\ 
& &\\
 $\mathcal{E}_3$  &
$\small{
\mathrm{Conv}\left\{
\begin{pmatrix}
1 \\
-1 \\
-1 \\
\end{pmatrix}, 
\begin{pmatrix}
1 \\
-1 \\
1 \\
\end{pmatrix}, 
\begin{pmatrix}
0 \\
1 \\
1 \\
\end{pmatrix}, 
\begin{pmatrix}
1\\
0 \\
1 \\
\end{pmatrix},
\begin{pmatrix}
1\\
0 \\
-1 \\
\end{pmatrix}, \right.}$  \\
& &\\
&
 $\small{
 \left. \hspace{5.5cm}
\begin{pmatrix}
0\\
1 \\
-1 \\
\end{pmatrix},
\begin{pmatrix}
-1\\
1 \\
1 \\
\end{pmatrix},
\begin{pmatrix}
-1\\
1 \\
-1 \\
\end{pmatrix},
\begin{pmatrix}
-1\\
-1 \\
1 \\
\end{pmatrix},
\begin{pmatrix}
-1\\
-1 \\
-1 \\
\end{pmatrix}
\right\}}$\\ 
&&\\
 $\mathcal{E}_4$  &
$\small{
\mathrm{Conv}\left\{
\begin{pmatrix}
1 \\
-1 \\
0 \\
\end{pmatrix}, 
\begin{pmatrix}
1 \\
-1 \\
-1 \\
\end{pmatrix}, 
\begin{pmatrix}
0 \\
1 \\
-1 \\
\end{pmatrix}, 
\begin{pmatrix}
1\\
0 \\
-1 \\
\end{pmatrix},
\begin{pmatrix}
1\\
0 \\
1 \\
\end{pmatrix}, \right.}$  \\
& &\\
&
 $\small{
 \left. \hspace{5.5cm}
\begin{pmatrix}
0\\
1 \\
2 \\
\end{pmatrix},
\begin{pmatrix}
-1\\
1 \\
-1 \\
\end{pmatrix},
\begin{pmatrix}
-1\\
1 \\
2 \\
\end{pmatrix},
\begin{pmatrix}
-1\\
-1 \\
-1 \\
\end{pmatrix},
\begin{pmatrix}
-1\\
-1 \\
0 \\
\end{pmatrix}
\right\}}$\\
&&\\
 $\mathcal{F}_1$  &
$\small{
\mathrm{Conv}\left\{
\begin{pmatrix}
0 \\
1 \\
-1 \\
\end{pmatrix}, 
\begin{pmatrix}
0 \\
1 \\
1 \\
\end{pmatrix}, 
\begin{pmatrix}
1 \\
1 \\
-1 \\
\end{pmatrix}, 
\begin{pmatrix}
1\\
0 \\
-1 \\
\end{pmatrix},
\begin{pmatrix}
1\\
0 \\
1 \\
\end{pmatrix},
\begin{pmatrix}
1\\
1\\
1 \\
\end{pmatrix}, \right.}$  \\
& &\\
&
 $\small{
 \left. \hspace{4.5cm}
\begin{pmatrix}
0\\
-1 \\
1 \\
\end{pmatrix},
\begin{pmatrix}
0\\
-1 \\
-1 \\
\end{pmatrix},
\begin{pmatrix}
-1\\
-1 \\
-1 \\
\end{pmatrix},
\begin{pmatrix}
-1\\
-1 \\
1 \\
\end{pmatrix},
\begin{pmatrix}
-1\\
0 \\
1 \\
\end{pmatrix},
\begin{pmatrix}
-1\\
0 \\
-1 \\
\end{pmatrix}
\right\}}$ \\
&&\\
 $\mathcal{F}_2$  &
$\small{
\mathrm{Conv}\left\{
\begin{pmatrix}
0 \\
1 \\
2 \\
\end{pmatrix}, 
\begin{pmatrix}
0 \\
1 \\
-1 \\
\end{pmatrix}, 
\begin{pmatrix}
1 \\
1 \\
2 \\
\end{pmatrix}, 
\begin{pmatrix}
1\\
0 \\
1 \\
\end{pmatrix},
\begin{pmatrix}
1\\
0 \\
-1 \\
\end{pmatrix},
\begin{pmatrix}
1\\
1\\
-1 \\
\end{pmatrix}, \right.}$ \\
& &\\
&
 $\small{
 \left. \hspace{4.5cm}
\begin{pmatrix}
0\\
-1 \\
-1 \\
\end{pmatrix},
\begin{pmatrix}
0\\
-1 \\
0 \\
\end{pmatrix},
\begin{pmatrix}
-1\\
-1 \\
0 \\
\end{pmatrix},
\begin{pmatrix}
-1\\
-1 \\
-1 \\
\end{pmatrix},
\begin{pmatrix}
-1\\
0 \\
-1 \\
\end{pmatrix},
\begin{pmatrix}
-1\\
0 \\
1 \\
\end{pmatrix}
\right\}}\vspace{0.2cm}$ \\ 
\bottomrule
\end{tabular}
\end{center}
\end{table}

\section{Appendix: Corrigenda to our published article \cite{YZ19}}

After publication of our article with Tohoku Math Jounal \cite{YZ19}, some technical errors were unfortunately found when the first author was working in \cite{NSY22}.
In this section, we clarify which part of calculation in Section $\ref{sec:computation}$ is wrong, and explain how this effects the statement of Theorem $\ref{thm:main}$.
Note that this section corresponding to our corrigenda \cite{YZ23}, and we will follow the notation and terminologies used in Sections $\ref{sec:Intro}$-$\ref{sec:example}$.

In Theorem $\ref{prop:unstablecond}$ $(2)$, we established an instability criterion of relative $K$-stability for polarized toric manifolds.
We then applied this sufficient condition to toric Fano threefolds to verify whether given toric Fano threefolds are relatively $K$-stable or not.
Unfortunately, there is a technical mistake in Proposition $5.1$ (b) when we adapted the instability criterion 
\begin{equation}\label{ineq:insta}
1-c < \frac{\int_{\Delta^{\!-}} (1-\theta_\Delta)^2 dx}{\mathrm{Vol}(\Delta^{\!-})}
\tag{1.4}
\end{equation}
to the toric Fano threefold $\mathcal B_1=\C P(\mathcal O_{\C P^2}\oplus \mathcal O_{\C P^2}(2))$, although the instability criterion $\eqref{ineq:insta}$ itself is correct 
This affects the ``only if\," part of Theorem $1.5$ and Corollary $1.6$, and provides inconclusive results for relative $K$-stability of $\mathcal C_2$,
$\mathcal D_1$, $\mathcal D_2$, $\mathcal E_1$ and $\mathcal E_2$. We remark that this erroneous part was already mentioned in the paper \cite[Section $3.3.2$]{NSY22} written by the first author and his collaborators.

To the best of our knowledge, the correct statement about relative $K$-stability of toric Fano threefolds is the following.
\begin{theorem}\cite[Theorem $1.5$]{YZ19}
Let $X$ be a toric Fano threefold. We assume that the Futaki invariant of $X$ does not vanish. Then $X$ is relatively strongly $K$-stable in the toric sense in the anti-canonical class {\textbf{if}}
$X$ is one of the following: $\bf {\mathcal B}_1$, $\mathcal B_2$, $\mathcal B_3$, $\mathcal C_1$, $\mathcal C_4$, $\mathcal E_3$, $\mathcal E_4$ and $\mathcal F_2$.
\end{theorem}
We remark that relative $K$-stability of $(X,-K_X)$ for $X=\mathcal B_1$ is a consequence of the existence of extremal metrics in each K\"ahler class due to Guan \cite{G95}, Hwang \cite{H94},
Apostolov-Calderbank-Gauduchon-T\o nnesen-Friedman \cite{ACGT-F08}. See \cite{NSY22}, Corollary $1.6$ for further details. 
We also mention that the toric Fano threefold $\mathcal C_2$ admits extremal K\"ahler metrics in {\emph{some}} K\"ahler classes due to the work of \cite{BCT-F19}.
However, it is subtle to determine {\emph{which}} K\"ahler class may admit extremal metrics even in the case of its first Chern class.

\begin{table}\label{list-stability}
\begin{center}
Erratum to \cite[Table $1$]{YZ19}. Relative stability in the toric sense of toric Fano threefolds.
\end{center}
\bigskip

\centering
\begin{tabular}{ccc}
\toprule
Notation  & Relative $K$-stability & \\
& &  \\ \midrule
$\C P^3$  & \underline{stable}  
\\ [3pt]
$\mathcal{B}_1$ & {\bf{stable}}  
\\ [3pt] 
$\mathcal{B}_2$  &   stable  
 \\ [3pt]
$\mathcal{B}_3$  &  stable  
\\ [3pt]
$\mathcal{B}_4$ & \underline{stable} 
 \\ [3pt]
$\mathcal{C}_1$ &  stable 
  \\ [3pt]
$\mathcal{C}_2$ & {\bf{inconclusive}} 
\\ [3pt]
$\mathcal{C}_3$ & \underline{stable} 
 \\ [3pt]
$\mathcal{C}_4$  & stable 
  \\ [3pt]
$\mathcal{C}_5$ & \underline{stable} 
\\ [3pt]
$\mathcal{D}_1$ & {\bf{inconclusive}} 
 \\ [3pt]
$\mathcal{D}_2$ & {\bf{inconclusive}}  
\\ [3pt]
$\mathcal{E}_1$ & {\bf{inconclusive}} 
 \\ [3pt]
$\mathcal{E}_2$ & {\bf{inconclusive}} 
\\ [3pt]
$\mathcal{E}_3$ &  stable 
 \\ [3pt]
$\mathcal{E}_4$ &  stable 
\\ [3pt] 
$\mathcal{F}_1$ & \underline{stable} 
\\ [3pt] 
$\mathcal{F}_2$ &  stable  
 \\
\bottomrule
\end{tabular}
\end{table}

\vskip 8pt

\noindent {\bf{The correct values of integration in the criteria $\eqref{ineq:insta}$.}}
Our erroneous parts in the original paper \cite[Table $1$]{YZ19} are caused by a technical mistake when we adapt the computational package \cite{normaliz} to calculate the integration over $\Delta^-$. 
This subsection summarizes the correct values of $\int_{\Delta^{\!-}} (1-\theta_\Delta)^2 dx$ for each toric Fano threefold, ${\mathcal B}_1$, $\mathcal C_2$, $\mathcal D_1$, $\mathcal D_2$, $\mathcal E_1$ and $\mathcal E_2$.

We repeat the description of the subset $\Delta^{\!-}=\set{x\in \Delta|1-\theta_\Delta <0}$ and the potential function $\theta_\Delta=\sum_{i=1}^3a_ix_i+c$ of each toric Fano threefold or the reader's convenience, 
although these data already appeared in Table $\ref{table:toricFano3}$. 

Here and hereafter, we only consider relative $K$-stability of toric Fano threefolds with respect to their anticanonical polarization.

\vskip 5pt
\noindent $\bullet$ $\mathcal B_1$. Let $\Delta$ be the moment polytope corresponding to $\mathcal B_1$. Then we see that
\begin{align*}
\theta_\Delta&=-\frac{620}{349}x_3-\frac{240}{349},\\  
\Delta^-&=\mathrm{Conv}\Set{
\begin{pmatrix}
4 \\
-1 \\
-1 \\
\end{pmatrix}, 
\begin{pmatrix}
\frac{39}{10} \\
-1 \\
-\frac{19}{20} \\
\end{pmatrix}, 
\begin{pmatrix}
-1 \\
-1 \\
-\frac{19}{20} \\
\end{pmatrix}, 
\begin{pmatrix}
 -1 \\
\frac{39}{10} \\
-\frac{19}{20} \\
\end{pmatrix},
\begin{pmatrix}
-1 \\
-1 \\
-1 \\
\end{pmatrix},
\begin{pmatrix}
-1 \\
4 \\
-1 \\
\end{pmatrix}},\\
\mathrm{Vol}(\Delta^-)&=\frac{7351}{12000} \qquad \text{and}   \qquad \int_{\Delta^-}(1-\theta_\Delta)^2dx=\frac{23785711}{14616120000}.
\end{align*}
Since the left-hand side value of $\eqref{ineq:insta}$ is $1-c=\frac{589}{349}$, we see that the toric Fano threefold $\mathcal B_1$ is inconclusive.
However, it must be relatively $K$-stable because $\mathcal B_1=\C P(\mathcal O_{\C P^2}\oplus \mathcal O_{\C P^2}(2))$ does admit extremal metrics in all K\"ahler classes as explained in the above. 

\vskip 5pt
\noindent $\bullet$ $\mathcal C_2$. Let $\Delta$ be the moment polytope corresponding to $\mathcal C_2$. Then we see that 
\begin{align*}
\theta_\Delta&=-\frac{7600}{17787}x_1-\frac{17750}{17787}x_3-\frac{4868}{17787},\\
  \Delta^-&= \mathrm{Conv}\Set{
\begin{pmatrix}
-\frac{981}{1520} \\
-1 \\
-1 \\
\end{pmatrix}, 
\begin{pmatrix}
-\frac{981}{1520} \\
\frac{4021}{1520} \\
-1 \\
\end{pmatrix}, 
\begin{pmatrix}
-1 \\
\frac{10111}{3550} \\
-\frac{3011}{3550} \\
\end{pmatrix}, 
\begin{pmatrix}
 -1 \\
3 \\
-1 \\
\end{pmatrix}, 
\begin{pmatrix}
-1 \\
-1 \\
-\frac{3011}{3550} \\
\end{pmatrix},
\begin{pmatrix}
-1 \\
-1 \\
-1 \\
\end{pmatrix}
}, \\
\mathrm{Vol}(\Delta^-)&=\frac{600596677989}{5823363200000} \qquad \text{and}   \qquad \int_{\Delta^-}(1-\theta_\Delta)^2dx=\frac{25255463527}{62892322560000}.
\end{align*}
Since the left-hand side value of $\eqref{ineq:insta}$ is $1-c=\frac{22655}{17787}$, we see that the toric Fano threefold $\mathcal C_2$ is inconclusive.

\vskip 7pt
\noindent $\bullet$ $\mathcal D_1$. Let $\Delta$ be the moment polytope corresponding to $\mathcal D_1$. Then we see that 
\begin{align*}
\theta_\Delta&= \frac{99600}{467581}x_1-\frac{627000}{467581}x_2-\frac{213939}{467581},\\
  \Delta^-&= \mathrm{Conv}\Set{
\begin{pmatrix}
\frac{48388}{18165} \vspace{0.1cm}\\
-\frac{12058}{18165} \\
-1 \\
\end{pmatrix}, 
\begin{pmatrix}
\frac{1363}{2490} \\
-1 \\
\frac{3617}{2490} \\
\end{pmatrix}, 
\begin{pmatrix}
3 \\
-1 \\
-1 \\
\end{pmatrix}, 
\begin{pmatrix}
\frac{1363}{2490} \\
-1 \\
-1 \\
\end{pmatrix}
}, \\
\mathrm{Vol}(\Delta^-)&=\frac{227763307043}{675748899000} \qquad \text{and}   \qquad \int_{\Delta^-}(1-\theta_\Delta)^2dx=\frac{33978139207574189228}{3693508189588076033475}.
\end{align*}
Since the left-hand side value of $\eqref{ineq:insta}$ is $1-c=\frac{681520}{467581}$, we see that the toric Fano threefold $\mathcal D_1$ is inconclusive.

\vskip 7pt
\noindent $\bullet$ $\mathcal D_2$. Let $\Delta$ be the moment polytope corresponding to $\mathcal D_2$. Then we see that 
\begin{align*}
\theta_\Delta&= \frac{219420}{650251}x_1-\frac{318320}{650251}x_2-\frac{62565}{650251},\\
  \Delta^-&= \mathrm{Conv}\Set{
\begin{pmatrix}
2\\
-\frac{1489}{1730} \\
-1 \\
\end{pmatrix}, 
\begin{pmatrix}
\frac{4288}{2385} \\
-1 \\
-\frac{1903}{2385} \\
\end{pmatrix}, 
\begin{pmatrix}
2 \\
-1 \\
-1 \\
\end{pmatrix}, 
\begin{pmatrix}
\frac{4288}{2385} \\
-1 \\
-1 \\
\end{pmatrix}
}, \\
\mathrm{Vol}(\Delta^-)&=\frac{13997521}{14760943875} \qquad \text{and}   \qquad \int_{\Delta^-}(1-\theta_\Delta)^2dx=\frac{13762295011178528}{31206581065640687844375}.
\end{align*}
Since the left-hand side value of $\eqref{ineq:insta}$ is $1-c=\frac{712816}{650251}$, we see that the toric Fano threefold $\mathcal D_2$ is inconclusive.

\vskip 7pt
\noindent $\bullet$ $\mathcal E_1$. Let $\Delta$ be the moment polytope corresponding to $\mathcal E_1$. Then we see that 
\begin{align*}
\theta_\Delta&= -\frac{17020}{19651}x_1-\frac{17020}{19651}x_2-\frac{6845}{19651},\\
  \Delta^-&= \mathrm{Conv}\Set{
\begin{pmatrix}
-\frac{103}{185}\\
-1 \\
-1 \\
\end{pmatrix}, 
\begin{pmatrix}
-\frac{103}{185} \\
-1 \\
-\frac{473}{185} \\
\end{pmatrix}, 
\begin{pmatrix}
-1 \\
-\frac{103}{185} \\
-1 \\
\end{pmatrix}, 
\begin{pmatrix}
 -1 \\
-\frac{103}{185} \\
\frac{473}{185} \\
\end{pmatrix}, 
\begin{pmatrix}
-1 \\
-1 \\
3 \\
\end{pmatrix},
\begin{pmatrix}
-1 \\
-1 \\
-1 \\
\end{pmatrix}
}, \\
\mathrm{Vol}(\Delta^-)&=\frac{6912272}{18994875} \qquad \text{and}   \qquad \int_{\Delta^-}(1-\theta_\Delta)^2dx=\frac{338285458174976}{36675475698849375}.
\end{align*}
Since the left-hand side value of $\eqref{ineq:insta}$ is $1-c=\frac{26496}{19651}$, we see that the toric Fano threefold $\mathcal E_1$ is inconclusive.

\vskip 7pt
\noindent $\bullet$ $\mathcal E_2$. Let $\Delta$ be the moment polytope corresponding to $\mathcal E_2$. Then we see that 
\begin{align*}
\theta_\Delta&= -\frac{2646160}{2735927}x_1-\frac{982960}{2735927}x_2-\frac{692905}{2735927}, \\
  \Delta^-&= \mathrm{Conv}\Set{
\begin{pmatrix}
-\frac{13897}{15035}\\
-1 \\
-1 \\
\end{pmatrix}, 
\begin{pmatrix}
-\frac{13897}{15035} \\
-1 \\
-\frac{28932}{15035} \\
\end{pmatrix}, 
\begin{pmatrix}
-1 \\
-\frac{4447}{5585} \\
-1 \\
\end{pmatrix}, 
\begin{pmatrix}
 -1 \\
-\frac{4447}{5585} \\
2 \\
\end{pmatrix}, 
\begin{pmatrix}
-1 \\
-1 \\
2 \\
\end{pmatrix},
\begin{pmatrix}
-1 \\
-1 \\
-1 \\
\end{pmatrix}}, \\
\mathrm{Vol}(\Delta^-)&=\frac{86882559394}{3787488274875} \qquad \text{and}   \qquad \int_{\Delta^-}(1-\theta_\Delta)^2dx=\frac{2914285427870970948608}{141752364572729422744044375}.
\end{align*}
Since the left-hand side value of $\eqref{ineq:insta}$ is $1-c=\frac{3428832}{2735927}$, we see that the toric Fano threefold $\mathcal E_2$ is inconclusive.

\end{document}